\theoremstyle{plain}
\newtheorem{corollary}{Corollary}[section]
\newtheorem{definition}[corollary]{Definition}
\newtheorem{lemma}[corollary]{Lemma}
\newtheorem{prop}[corollary]{Proposition}
\newtheorem{rem}[corollary]{Remark}
\newtheorem{thm}[corollary]{Theorem}
\newfont{\sBlackboard}{msbm10 scaled 1200}
\newcommand{\mylabel}[1]{\label{#1}
    \ifx\undefined\stillediting
    \else \fbox{$#1$}\fi }
\newcommand{\BE}{\begin{equation}}
\newcommand{\EEQ}{\end{equation}}
\newcommand{\rfb}[1]{\mbox{\rm
        (\ref{#1})}\ifx\undefined\stillediting\else:\fbox{$#1$}\fi}
\newfont{\Blackboard}{msbm10 scaled 1200}
\newfont{\roma}{cmr10 scaled 1200}
\newcommand{\bb}{\begin{equation}}
\newcommand{\bbb}{\end{equation}}
\newcommand{\mm}    {{\hbox{\hskip 0.5pt}}}
\newcommand{\bluff} {{\hbox{\raise 15pt \hbox{\mm}}}}
\def\section{\@startsection {section}{1}{\z@}{-3.5ex plus -1ex minus
        -.2ex}{2.3ex plus .2ex}{\large\bf}}
\numberwithin{equation}{section}
\begin{document}
\title{Ground state and nodal solutions for fractional Orlicz problems with lack of regularity and without the Ambrosetti-Rabinowitz condition}
\author{Hlel Missaoui\footnote{
hlel.missaoui@fsm.rnu.tn;\ hlelmissaoui55@gmail.com}\ \ and  Hichem Ounaies\footnote{ hichem.ounaies@fsm.rnu.tn}\\
Mathematics Department, Faculty of Sciences, University of Monastir,\\ 5019 Monastir, Tunisia}
\maketitle


\begin{abstract}
We consider a  non-local Shr\"odinger problem driven by the fractional  Orlicz g-Laplace operator as follows
\begin{equation}\label{PP}
(-\triangle_{g})^{\alpha}u+g(u)=K(x)f(x,u),\ \ \text{in}\  \mathbb{R}^{d},\tag{P}
\end{equation}
where $d\geq 3,\ (-\triangle_{g})^{\alpha}$ is the fractional Orlicz g-Laplace
operator, $f:\mathbb{R}^d\times\mathbb{R}\rightarrow \mathbb{R}$ is a measurable function and $K$ is a positive continuous function. Employing the Nehari manifold method and without assuming the well-known Ambrosetti-Rabinowitz  and differentiability conditions on the non-linear term $f$, we prove that the problem \eqref{PP} has a ground state of fixed sign and a nodal (or sign-changing) solutions. 
\end{abstract}

{\small \textbf{Keywords:} Ground state, Nodal solutions, Fractional Orlicz-Sobolev spaces, Nehari  method, Generalized subdifferential.} \\
{\small \textbf{2010 Mathematics Subject Classification:} Primary: 35A15; Secondary: 35J60, 46E30, 35R11, 45G05.}


\section{Introduction}
Recently, much attention has been focused on the study of non-linear problems involving non-local operators. These types of operators arise in several areas such as the description of many physical phenomena (see \cite[Part II - Chapters 12 and 13]{41}).

In this paper we consider the following non-local Shr\"odinger  equation
\begin{equation}\label{P}
(-\triangle_{g})^{\alpha}u+g(u)=K(x)f(x,u),\ \ \text{in}\
\mathbb{R}^{d},\tag{P}
 \end{equation}
 where $d\geq 3$, $f:\mathbb{R}^d\times\mathbb{R}\rightarrow \mathbb{R}$ is a measurable function, $K$ is a positive continuous function, and $(-\triangle_{g})^{\alpha}$ is the fractional Orlicz g-Laplace operator introduced in \cite{42} and defined as 
 \begin{align}\label{4eq170}
(-\triangle_{g})^{\alpha}u(x)&= \text{p.v.}\int_{\mathbb{R}^{d}} g\bigg{(}\frac{|u(x)-u(y)|}{|x-y|^{\alpha}}\bigg{)}\frac{u(x)-u(y)}{|u(x)-u(y)|}\frac{dy}{|x-y|^{d+\alpha}},
\end{align}
where p.v. being a commonly used abbreviation for "in the principle value sense", $\alpha\in (0,1)$ and $G$ is an N-function such that $g=G^{'}$. We recall the definition of N-function and its properties later in Section 2. The variational setting for the fractional g-Laplace operator is the fractional Orlicz-Sobolev space $W^{\alpha,G}(\mathbb{R}^d)$, which is introduced in \cite{42}. For more details on the fractional Orlicz-Sobolev spaces, we refer the reader to \cite{442,443,421,440,441,46} and the references therein.\\

 Observe that when $\alpha=1$ and $g(t) = \vert t\vert^{p-2}t$, $p > 1$, the problem \eqref{P} turns into  the classical p-Laplacian problem
 $$-\text{div}(\vert \nabla u\vert^{p-2} \nabla u)+\vert u\vert^{p-2}u=K(x)f(x,u).$$
 When $\alpha=1$ and $g(t)= a(\vert t\vert)t$, the problem \eqref{P}  transmute into  the Orlicz g-Laplacian problem
$$-\text{div}(a(\vert \nabla u\vert) \nabla u)+ a(\vert u\vert)u=K(x)f(x,u),\ \text{where}\ \text{div}(a(\vert \nabla u\vert) \nabla u)\ \text{is the Orlicz g-Laplace operator}.$$
When $\alpha\in (0,1)$ and $g(t) = \vert t\vert^{p-2}t$, the problem \eqref{P} transformed into  the fractional p-Laplacian problem
 $$-(\triangle_p)^\alpha u+\vert u\vert^{p-2}u=K(x)f(x,u),\ \text{where}\ (\triangle_p)^\alpha u\ \text{is the fractional p-Laplace operator}.$$
 
 In the last decades, the existence of ground state and nodal solutions for the above problems (classical p-Laplacian, Orlicz g-Laplacian and fractional p-Laplacian problems) have been studied extensively. We do not intend to review the huge bibliography 
 , we just emphasize that the  Nehari method is a very effective tool for proving the existence of  solutions of such problems, see \cite{43,44,47,424,423,425,426,414,427,46,49,410,411,45,413} and the references therein.\\

In \cite{424}, S. Barile and G. M. Figueredo have studied the following equation
\begin{equation}\label{1.1}
-\text{div}(a(\vert \nabla u\vert^{p})\vert \nabla u\vert^{p-2}\nabla u)+V(x)b(\vert u\vert^{p})\vert u\vert^{p-2}=K(x)f(u),\ \ \text{in}\  \mathbb{R}^{d},\tag{$P_1$}
 \end{equation}
where $d\geq 3,\ 2\leq p<d,\ a,\ b,$ are $C^{1}$ real functions
and $V,\ K$ are continuous positives functions. By assuming the well-Known Ambrosetti-Rabinowitz (AR for short), differentiability ($f\in C^1$) conditions on the non-linear term $f$ and by using a minimization argument coupled with a quantitative deformation lemma, they proved the existence of a least energy sign-changing solution for equation \eqref{1.1} with two nodal domains.\\

In \cite{46}, G. M. Figueredo considered   the following  equation
   \begin{equation}\label{eqf}
-M\left( \int_{\Omega}g(\vert \nabla u\vert)dx\right)\Delta_g u=f(u),\ \text{in}\ \Omega, \tag{$P_2$}
 \end{equation}
where $\Omega$ is a bounded domain in $\mathbb{R}^{d}$, $M$ is  $C^1$ function and $\Delta_gu:=\text{div}(a(\vert \nabla u\vert)\nabla u)$ is the Orlicz g-Laplace operator ($g(t)=a(\vert t\vert)t$). By considering the (AR) and differentiability conditions  on the non-linear term $f$ and following the approaches employed in \cite{424}, the author proved the existence of a least energy nodal solution for equation \eqref{eqf}.\\

 In \cite{44}, V. Ambrosio and  T. Isernia  have studied the following  fractional equation
\begin{equation}\label{eqa}
-(\triangle)^\alpha u+ V(x)u=K(x)f(u),\ \ \text{in}\  \mathbb{R}^{d},\tag{$P_3$}
 \end{equation}
where $\alpha\in (0,1)$ and $d>2\alpha$, $(\triangle)^\alpha u$ is the fractional Laplace operator and $V,\ K$ are continuous functions. Under the (AR) and differentiability conditions on the non-linear term $f$ and by applying a minimization method combined with a quantitative deformation lemma, they proved the existence of a least energy nodal solution. In \cite{47}, the authors extended the result obtained in \cite{44} to a non-local generalized fractional Orlicz equations of type \eqref{P} (under the assumptions considered in \cite{44,424,46} on the non-linear term ).\\

 In \cite{49}, without the (AR) and differentiability conditions on the non-linear term $f$, G. M. Figueiredo and J. R. Santos J\'unior established the existence of least energy sign-changing solutions for problem \eqref{eqf}. Indicate that the authors developed a new approach based on the topological degree. In \cite{43}, by following closely the approach developed in \cite{49} and under the same assumptions on the non-linear term $f$, V. Ambrosio et al. proved the existence of a least energy nodal solution for equation \eqref{eqa}.\\


The goal of this paper is to prove the existence of a ground state and a least energy  nodal weak solution for the generalized fractional Orlicz problem  \eqref{P} without the (AR) and differentiability conditions on the non-linear term $f$. We mean by  weak solution of problem \eqref{P} a critical point $u\in W^{\alpha,G}(\mathbb{R}^d)$ of the energy functional $J$ associated to problem \eqref{P}: $J^{'}(u)v=0,\ \ \text{for all}\ v\in W^{\alpha,G}(\mathbb{R}^d)$ (the functional $J$ will be defined later in Section 4).
Our approaches are based on the Nehari method and the generalized subdifferential. The main features and difficulties  involved in the study  of the problem \eqref{P} are listed as follows:
\begin{enumerate}
\item[$(1)$] The non-local character of the fractional Orlicz g-Laplacian. More precisely, in contrast with the classic case \cite{424,423,425,426,427}, in the fractional Orlicz framework we do not have the following decompositions
 $$J(u) = J(u^+) + J(u^-),\ \text{and}\ J^{'}(u^-)u^- =J^{'}(u^+)u^+ = 0,\text{where}\ u=u^++u^-$$
for $u$ belonging to a subset $\mathcal{M}$ of Nehari manifold $\mathcal{N}$ of $J$ (the definitions of $u^+,u^-$, and the sets $\mathcal{N},\mathcal{M}$ will be specified later in Section 4). Such facts make the use of the minimization method more difficult. To overcome this difficulty, we use a new estimate which is inspired by the work \cite{415}.
\item[$(2)$]Unlike with the hypotheses on the non-linear term  $f$ stated in \cite{44,47,48,415}, in our present work, we do not require the differentiability of $f$ (see hypotheses $(H_f)$ Section 2). So, we do not hope the existence of a differentiable structure in the sets $\mathcal{N}$ and $\mathcal{M}$. For more details about this subject, we refer the reader to \cite{444}. To surmount the lack of differentiability, we use the idea employed in \cite{416}, which is based on the non-smooth multiplier rule of Clarke \cite[Theorem 10.47, p. 221]{418}.
\item[$(3)$]  Since we do not assume the (AR) condition on $f$, it is not trivial that minimizing sequences are bounded. We overcome this difficulty, with adaptation of arguments employed in \cite{411}. Moreover, we invoke  Miranda's theorem \cite{412} to show that $\mathcal{M}\neq \emptyset$.
\end{enumerate}

To the best of our knowledge, there is only two papers in the literature, see \cite{47,48}, devoted for the existence and multiplicity of sign-changing solutions for fractional Orlicz problems under the (AR) condition. There are no results treating the existence and multiplicity of sign-changing solutions for fractional Orlicz problems without the (AR) condition.\\ 

 This paper is organized as follows. In Section 2, firstly, we set the variational framework related to the problem \eqref{P} and we establish some properties about N-functions. Secondly, we present the definition of the generalized subdifferential. In Section 3, we set the hypotheses on the weight function $K$ and the non-linear term $f$, and we state our main result (Theorem \ref{4thm1}). In Section 4, we present the energy functional associated to problem \eqref{P} and we prove some technical lemmas. In Section 5, we show the existence of a ground state solution for problem \eqref{P}. Finally, in Section 6  we prove the existence of a least energy nodal weak solution. 
\section{Mathematical preliminaries}
\subsection{Framework setting: Fractional Orlicz-Sobolev Spaces}
In this subsection, we recall some necessary properties about N-functions and the fractional Orlicz-Sobolev spaces. for more details we refer the reader to \cite{421,422,42,22}.

In order to construct a fractional Orlicz-Sobolev space setting for problem \eqref{P}, we consider the following  assumptions on $G$ and $g$:
\begin{enumerate}
\item[$(H_G)$]\ \ $g:\mathbb{R}\rightarrow \mathbb{R}$ is an odd, continuous and non-decreasing function and  $G:\mathbb{R}\rightarrow \mathbb{R}^+$ defined by
\begin{equation}\label{4eq40}
G(t)=\int_{0}^{ t} g(s)\ ds,
\end{equation}
such that $G$ and $g$ satisfy the following assumptions
\begin{enumerate}
\item[$(g_1)$] $g(t)>0$, for all $t>0$, $g(0)=0$  and $\lim\limits_{t\rightarrow +\infty}g(t)=+\infty.$
\item[$(g_2)$] There exist $g^-,g^+\in (1,d)$ such that
$$g^-\leq \frac{g(t)t}{G(t)}\leq g^+,\ \ \text{for all}\ t>0.$$
\item[$(g_3)$] $g\in C^1(\mathbb{R_+^*})$ and
$\displaystyle{g^--1\leq \frac{g^{'}(t)t}{g(t)}\leq g^+-1,\ \ \text{for all}\ t>0.}$
\item[$(g_4)$] $\displaystyle \int_{0}^{1}\frac{G^{-1}(t)}{t^{\frac{d+\alpha}{d}}}dt<\infty$ and  $\displaystyle
\int_{1}^{+\infty}\frac{G^{-1}(t)}{t^{\frac{d+\alpha}{d}}}dt=\infty$.
\end{enumerate}
\end{enumerate}
 $G$ is an N-function: $G$ is even, positive, continuous and  convex function, Moreover  $\frac{G(t)}{t}\rightarrow 0$ as $t\rightarrow0$ and  $\frac{G(t)}{t}\rightarrow +\infty$ as $t\rightarrow+\infty$ (see \cite{22}).\\
The conjugate N-function of G denoted $\tilde{G}$ is defined by
$$\tilde{G}(t)=\int_{0}^{ t} \tilde{g}(s)\ ds,$$
where  $\tilde{g}: \mathbb{R}\rightarrow\mathbb{R}$ is given by
$\tilde{g}(t)=\sup\{s:\ g(s)\leq t\}$. Since $g$ is a continuous function, it comes that $\tilde{g}(\cdot)=g^{-1}(\cdot)$.\\
 Involving the functions $G$ and $\tilde{G}$, we have the Young’s inequality
\begin{equation}\label{9}
st\leq G(s)+\tilde{G}(t).
\end{equation}
We say that an N-function $G$ satisfies the  $\triangle_2$-condition, if there exists $C>0$ such that
\begin{equation}\label{4eq41}
G(2t) \leq CG(t),\ \text{for all}\  t > 0.
\end{equation}
The assumption $(g_2)$ implies that $G$ and $\tilde{G}$ satisfy the  $\triangle_2$-condition (see \cite{22}).\\
Let $A$ and $B$ are two N-functions, we say that $A$ is essentially
stronger than $B$ ($B\prec\prec A$ in
symbols), if  for every positive constant $k$, we have
$$\lim_{t\rightarrow+\infty}\frac{B(kt)}{A(t)}=0.$$
Another important function related to the N-function $G,$ is the Sobolev conjugate N-function denoted $G_{*}$ and defined by 
$$G_{*}^{-1}(t)=\int_{0}^{t}\frac{G^{-1}(s)}{s^{\frac{d+\alpha}{d}}}\ ds, \ t>0.$$
$\displaystyle{G_{*}(t)=\int_{0}^{t}g_{*}(s)\ ds}$ and according to  assumption $(g_2)$,  
\begin{equation}\label{2.7}
g^-_{*}\leq \frac{g_*(t)t}{G_*(t)}\leq g^+_{*},\ \text{for all} \ t>0,\ \text{where}\ g^+_{*}=\frac{dg^+}{d- g^+}\ \text{and}\ g^-_{*}=\frac{dg^-}{d- g^-}.
\end{equation}

Since $G$ satisfies the $\Delta_{2}$-condition, the Orlicz space $L^{G}(\mathbb{R}^{d})$  is the vectorial space of  all measurable function $u:\mathbb{R}^{d}\rightarrow\mathbb{R}$ satisfies
$$\tilde{\rho}(u):=\int_{\mathbb{R}^{d}}G(u)\ dx < \infty\ (\text{see}\ \cite{22}).$$
$L^{G}(\mathbb{R}^{d})$ is a Banach space under the Luxemburg norm
$$\Vert u\Vert_{(G)}=\inf \left\lbrace  \lambda >0\ : \ \tilde{\rho}(\frac{u}{\lambda})\leq 1\right\rbrace. $$\\
Next, we introduce the  fractional Orlicz-Sobolev space. We denote by $W^{\alpha,G}(\mathbb{R}^{d})$ the fractional Orlicz-Sobolev space defined by
\begin{equation}\label{20}
    W^{\alpha,G}(\mathbb{R}^{d})=\bigg{\{}u\in
L^{G}(\mathbb{R}^{d}):\ \overline{\rho}(\alpha;u)<\infty\bigg{\}},
    \end{equation}
 where $\displaystyle{\overline{\rho}(\alpha;u):=\int_{\mathbb{R}^{d}}\int_{\mathbb{R}^{d}} G\bigg{(}\frac{u(x)-u(y)}{|x-y|^{\alpha}}\bigg{)}\frac{dxdy}{|x-y|^{d}}}.$
    
 The space $W^{\alpha,G}(\mathbb{R}^d)$ is equipped with the norm,
    \begin{equation}\label{21}
    \|u\|_{\alpha,G}=\|u\|_{(G)}+[u]_{(\alpha,G)},
    \end{equation}
    where $[\cdot]_{(\alpha,G)}$ is the Gagliardo semi-norm defined by
    \begin{equation}\label{22}
    [u]_{(\alpha,G)}=\inf\bigg{\{}\lambda>0:\ \overline{\rho}(\alpha;\frac{u}{\lambda})\leq 1\bigg{\}}.
    \end{equation}
    
Since $G$ and $\tilde{G}$ satisfy the $\triangle_{2}$-condition, so the fractional Orlicz-Sobolev space $W^{\alpha,G}(\mathbb{R}^d)$ is a separable and reflexive Banach space. Moreover, $C^{\infty}_{0}(\mathbb{R}^{d})$ is dense in $W^{\alpha,G}(\mathbb{R}^{d})$ (see \cite[Proposition 2.10]{42}).

 Let
 \begin{equation}\label{4eq60}
  \rho(\alpha; u):=\tilde{\rho}(u)+\overline{\rho}(\alpha;u)\ \text{and}\ \Vert u\Vert=\inf\left\lbrace \lambda>0 : \rho(\alpha;\frac{u}{\lambda})\leq 1\right\rbrace. 
   \end{equation}
Evidently, $\Vert u\Vert$ is an  equivalent norm to $\|u\|_{\alpha,G}$ with the relation
 $$\frac{1}{2}\|u\|_{\alpha,G}\leq \Vert u\Vert\leq 2\|u\|_{\alpha,G},\ \text{for all}\ u\in W^{\alpha,G}(\mathbb{R}^{d}).$$
 In the sequel we will use  $\Vert \cdot\Vert$ as a norm for the space $W^{\alpha,G}(\mathbb{R}^{d})$.
 
As we mentioned in Section 1, the fractional g-Laplace operator is defined by
\begin{align}\label{17}
(-\triangle_{g})^{\alpha}u(x)&=\text{p.v.}\int_{\mathbb{R}^{d}} g\bigg{(}\frac{|u(x)-u(y)|}{|x-y|^{\alpha}}\bigg{)}\frac{u(x)-u(y)}{|u(x)-u(y)|}\frac{dy}{|x-y|^{d+\alpha}}\nonumber\\
&= \text{p.v.}\int_{\mathbb{R}^{d}} g\bigg{(}\frac{u(x)-u(y)}{|x-y|^{\alpha}}\bigg{)}\frac{dy}{|x-y|^{d+\alpha}}\ \ (\text{since}\ g\ \text{is odd}).
\end{align}
 $(-\triangle_{g})^{\alpha}$ is well defined between $W^{\alpha,G}(\mathbb{R}^{d})$ and its topological dual space  $(W^{\alpha,G}(\mathbb{R}^{d}))^*=W^{-\alpha,\tilde{G}}(\mathbb{R}^{d})$. In fact, in \cite[Theorem 6.12]{42}, the following representation formula is provided
\begin{equation}\label{18}
\langle(-\triangle_{g})^{\alpha}u,v\rangle=\int_{\mathbb{R}^{d}}\int_{\mathbb{R}^{d}} g\bigg{(}\frac{u(x)-u(y)}{|x-y|^{\alpha}}\bigg{)}\frac{v(x)-v(y)}{|x-y|^{\alpha}}\frac{dxdy}{|x-y|^{d}},
\end{equation}
for all $u, v\in W^{\alpha,G}(\mathbb{R}^{d})$. Where $\langle \cdot , \cdot\rangle$ is the duality brackets for the pair $\left( W^{\alpha,G}(\mathbb{R}^{d}),W^{-\alpha,\tilde{G}}(\mathbb{R}^{d})\right)$.

In what follows, we give some theorems and lemmas related to N-functions and the fractional Orlicz-Sobolev  space.
 \begin{thm}\label{4thm2} $($see \cite{421}$)$\\
 Under the assumption $(H_G)$, the continuous embedding $W^{\alpha,G}(\mathbb{R}^{d})\hookrightarrow L^{B}(\mathbb{R}^{d})$ holds, for all $B$ N-function satisfies the $\triangle_{2}$-condition and $B \prec\prec G_{*}$.
\end{thm}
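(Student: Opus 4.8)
The plan is to prove the compact embedding by a now-standard two-step argument: a local compactness statement via Rellich--Kondrachov-type theory for fractional Orlicz--Sobolev spaces on bounded domains, followed by a tail-estimate (vanishing-at-infinity) argument to upgrade to the whole space. Wait---on re-reading, the theorem as stated (Theorem~\ref{4thm2}) only asserts the \emph{continuous} embedding $W^{\alpha,G}(\mathbb R^d)\hookrightarrow L^B(\mathbb R^d)$ for every N-function $B$ with $B\prec\prec G_*$ satisfying $\triangle_2$, and it is attributed to reference \cite{421}. So the honest ``proof'' here is a citation-and-reduction argument rather than a from-scratch construction. I would structure it as follows.

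First I would recall the core embedding from \cite{421} (or from \cite[Theorem~6.12 and the Sobolev embedding section]{42}): under $(H_G)$, in particular condition $(g_4)$ which guarantees that the Sobolev conjugate $G_*$ is well-defined and that $\int_0^1 G^{-1}(t)t^{-(d+\alpha)/d}\,dt<\infty$, one has the continuous embedding $W^{\alpha,G}(\mathbb R^d)\hookrightarrow L^{G_*}(\mathbb R^d)$, i.e.\ there is a constant $C=C(d,\alpha,G)>0$ with $\|u\|_{(G_*)}\le C\|u\|$ for all $u\in W^{\alpha,G}(\mathbb R^d)$. This is the sharp target space, analogous to $W^{s,p}\hookrightarrow L^{p_s^*}$. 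I would also note that $u\in L^{G_*}(\mathbb R^d)\cap L^{G}(\mathbb R^d)$, since $W^{\alpha,G}(\mathbb R^d)\subset L^G(\mathbb R^d)$ by definition of the space.

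Second, I would carry out the interpolation/domination step that moves from the endpoint $L^{G_*}$ (and the ``bottom'' space $L^G$) to an arbitrary intermediate Orlicz space $L^B$ with $B\prec\prec G_*$. The key point is a pointwise N-function inequality: if $B\prec\prec G_*$ and $B$ satisfies $\triangle_2$, then for every $\e>0$ there is $C_\e>0$ such that $B(t)\le \e\, G_*(t)+C_\e\, G(t)$ for all $t\ge 0$ --- this follows by splitting into $|t|$ small (where $G$ dominates, using $G(t)/t\to 0$ is false in the needed direction, so more carefully: near $0$, $B\le $ const $\cdot G$ up to a $\triangle_2$-type comparison, while near infinity $B(kt)/G_*(t)\to 0$ gives the $\e G_*$ bound). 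Integrating this inequality over $\mathbb R^d$ against $|u|$ and using both the defining modular bound $\int_{\mathbb R^d}G(u)\,dx<\infty$ and the endpoint embedding $\int_{\mathbb R^d}G_*(u)\,dx<\infty$, one concludes $\int_{\mathbb R^d}B(u)\,dx<\infty$, hence $u\in L^B(\mathbb R^d)$; a quantitative version of the same estimate, combined with $\triangle_2$ (so that modular convergence equals norm convergence), yields the operator-norm bound $\|u\|_{(B)}\le C\|u\|$. This is where I expect the main technical obstacle to lie: making the N-function comparison $B\le \e G_* + C_\e G$ rigorous near $t=0$ requires care, since $B\prec\prec G_*$ only controls behaviour at infinity; one typically needs the complementary (and automatic, from convexity and $B(t)/t\to 0$) fact that on any bounded $t$-interval $B(t)\le c\,t^2$ or $B(t)\le c\,G(t)$ after a $\triangle_2$ adjustment, or one simply works with the genuinely relevant regime $|u(x)|\ge 1$ for the $G_*$ part and $|u(x)|\le 1$ for the $G$ part and uses $\triangle_2$ to absorb constants.

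Finally, I would assemble the pieces: fix $B$ with $B\prec\prec G_*$ and $B\in\triangle_2$, take $u\in W^{\alpha,G}(\mathbb R^d)$, invoke the endpoint embedding to get $u\in L^{G_*}$, apply the N-function domination inequality and integrate to get finiteness of the $B$-modular, and then pass from modular control to norm control via the $\triangle_2$-condition on $B$ to obtain $\|u\|_{(B)}\le C(d,\alpha,G,B)\,\|u\|$, which is precisely the claimed continuous embedding. Throughout, the hypotheses $(g_2)$ (forcing $G,\tilde G\in\triangle_2$ and giving the two-sided growth $g^-\le g(t)t/G(t)\le g^+$) and $(g_4)$ (existence and integrability properties of $G_*$) are exactly what make the endpoint embedding and the comparison arguments go through, so I would make explicit where each is used.
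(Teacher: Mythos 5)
The paper itself gives no proof of Theorem~\ref{4thm2} --- it is a citation of reference \cite{421} --- so there is no ``paper proof'' for your argument to match. What follows is an assessment of your reconstruction on its own terms.

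Your reduction to the endpoint embedding $W^{\alpha,G}(\mathbb R^d)\hookrightarrow L^{G_*}(\mathbb R^d)$ together with the trivial containment $W^{\alpha,G}(\mathbb R^d)\subset L^G(\mathbb R^d)$, followed by an N-function domination of $B$ by $G_*$ and $G$, is indeed the standard scaffold for this kind of result. But the step you flagged and then waved away is a genuine gap, not a technicality. You assert that the small-$t$ comparison $B(t)\le C_\e G(t)$ (or even $B(t)\le c\,t^2$) on a bounded interval is ``automatic, from convexity and $B(t)/t\to 0$.'' It is not. Convexity with $B(0)=0$ only gives $B(t)\le (B(T)/T)\,t$ for $t\in[0,T]$, i.e.\ a \emph{linear} upper bound, whereas $G(t)/t\to 0$ makes $G$ strictly sublinear near $0$. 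There is no way to deduce $B\lesssim G$ near the origin from $B\prec\prec G_*$ and $\triangle_2$: take $G(t)\sim t^4$ and $B(t)\sim t^2$ near $0$ and $B\prec\prec G_*$ is unaffected while $B(t)/G(t)\to\infty$ as $t\to0^+$. Without that small-$t$ control the interpolation inequality $B\le\e G_*+C_\e G$ fails and the argument does not close.

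Worse, the failure is not a defect of your particular method: without some condition tying the behaviour of $B$ to that of $G$ near $0$, the conclusion itself is false on the unbounded domain $\mathbb R^d$. In the model case $G(t)=|t|^p$ one has $G_*(t)=|t|^{p^*_\alpha}$, and every $B(t)=|t|^r$ with $1<r<p^*_\alpha$ satisfies $B\prec\prec G_*$ and $\triangle_2$; but for $r<p$ there is no continuous embedding $W^{\alpha,p}(\mathbb R^d)\hookrightarrow L^r(\mathbb R^d)$ (slowly decaying functions lie in $L^p\cap L^{p^*_\alpha}$ but not in $L^r$). So the statement as transcribed in the paper must be carrying an implicit extra hypothesis --- typically something like $G$ dominating $B$ near $0$ (or $G\prec B$ at infinity together with a matching near-zero comparison), which is exactly what makes the two-sided estimate $B\le\e G_*+C_\e G$ available. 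Your write-up should either import that hypothesis explicitly from \cite{421} and use it to justify the near-zero bound, or restrict to the regime $|t|\ge 1$ and argue separately on $\{|u|\le 1\}$ using that extra assumption; as written, the claim that the near-zero estimate is automatic is incorrect and is precisely the point at which the proof would fail.
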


Under the assumptions  $(g_1)-(g_3)$, some elementary inequalities and properties listed in the following lemmas are valid. See \cite{421,422,46,hlel42}.

\begin{lemma}\label{lem444}
If $G$ is an N-function, then
$$G(a+b)\geq G(a)+G(b),\ \text{for all}\ a,b\geq 0.$$
\end{lemma}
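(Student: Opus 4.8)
The plan is to exploit the integral representation $G(t)=\int_0^t g(s)\,ds$ given in $(H_G)$, together with the fact that $g$ is non-decreasing. The degenerate case $a=b=0$ is immediate since $G(0)=0$, so I would assume $a+b>0$; by the symmetry of the claimed inequality it costs nothing to assume in addition that $b>0$ (if $b=0$ the inequality reads $G(a)\ge G(a)$).

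The core computation is to rewrite the difference $G(a+b)-G(a)$ as an integral and shift the variable of integration:
\[
G(a+b)-G(a)=\int_{a}^{a+b} g(s)\,ds=\int_{0}^{b} g(a+t)\,dt .
\]
Since $a\ge 0$ and $g$ is non-decreasing by $(H_G)$, we have $g(a+t)\ge g(t)$ for every $t\ge 0$, so
\[
G(a+b)-G(a)=\int_{0}^{b} g(a+t)\,dt\ \ge\ \int_{0}^{b} g(t)\,dt = G(b),
\]
which is precisely $G(a+b)\ge G(a)+G(b)$.

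As an alternative route, one can instead use convexity: because $G$ is convex with $G(0)=0$, for any $\lambda\in[0,1]$ and $t\ge 0$ one has $G(\lambda t)=G(\lambda t+(1-\lambda)\cdot 0)\le \lambda G(t)$; applying this with $\lambda=\frac{a}{a+b}$ and $\lambda=\frac{b}{a+b}$ to $t=a+b$ and summing the two inequalities gives $G(a)+G(b)\le \frac{a+b}{a+b}\,G(a+b)$. I do not expect a genuine obstacle in this lemma; the only place that needs a word of care is the degenerate value $a+b=0$ (or, in the convexity argument, the division by $a+b$), and that is disposed of in one line as above.
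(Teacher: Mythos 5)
Your proof is correct. The paper does not actually supply a proof of Lemma~\ref{lem444}; it states it as a known property of N-functions and cites \cite{421,422,46,hlel42}, so there is no in-paper argument to compare against. Both of your routes are valid and standard: the integral shift $G(a+b)-G(a)=\int_0^b g(a+t)\,dt\ge\int_0^b g(t)\,dt=G(b)$ uses only the representation $G(t)=\int_0^t g$ and the monotonicity of $g$ guaranteed by $(H_G)$, while the convexity route uses $G(\lambda t)\le\lambda G(t)$ (valid because $G$ is convex with $G(0)=0$) applied at $\lambda=a/(a+b)$ and $\lambda=b/(a+b)$. The first is slightly more elementary in that it does not require handling the degenerate denominator $a+b=0$, but as you note that case is trivially dispatched. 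Either argument is acceptable; no gap.
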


\begin{lemma}\label{lem2.2}
Under the assumptions $(g_1)-(g_3)$, the functions $G$ and $\tilde{G}$ satisfy the following inequalities
\begin{equation}\label{2.6}
\tilde{G}(g(t))\leq G(2t)\ \ \text{and}\ \ \tilde{G}(\frac{G(t)}{t})\leq G(t)\ \ \forall\ t\geq 0.
\end{equation}
\end{lemma}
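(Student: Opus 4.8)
\textbf{Proof proposal for Lemma \ref{lem2.2}.}

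The plan is to derive both inequalities in \eqref{2.6} directly from the variational/geometric characterization of the conjugate $N$-function together with the lower bound $g^-\le g(t)t/G(t)$ from $(g_2)$. Recall the classical identity for conjugate $N$-functions: if $G(t)=\int_0^t g(s)\,ds$ and $\tilde G(t)=\int_0^t g^{-1}(s)\,ds$, then for every $t\ge 0$ one has the equality case of Young's inequality, namely $G(t)+\tilde G(g(t))=t\,g(t)$. This is the key structural fact I would invoke first; it is standard (see \cite{22}) and reduces the first inequality to an estimate on $t\,g(t)-G(t)$. Indeed, from this identity $\tilde G(g(t))=t\,g(t)-G(t)$, so the first claim $\tilde G(g(t))\le G(2t)$ is equivalent to $t\,g(t)\le G(t)+G(2t)$. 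Now by $(g_2)$, $t\,g(t)\le g^+ G(t)$, while iterating the $\triangle_2$-condition (which follows from $(g_2)$, as noted after \eqref{4eq41}) and using convexity gives $G(2t)\ge G(t)$ and more precisely $G(2t)\ge 2^{g^-}G(t)$ via the lower Simon-type inequality $G(\lambda t)\ge \lambda^{g^-}G(t)$ for $\lambda\ge 1$ (a routine consequence of $(g_2)$ obtained by integrating $g^-/s \le g(s)/G(s)$). Since $g^+<d$ is not directly comparable, I would instead just use the cruder bound: from $(g_2)$, $g(t)t\le g^+ G(t)\le G(t)+(g^+-1)G(t)$, and since $G(2t)\ge 2^{g^-}G(t)\ge 2G(t)\ge (g^+-1)G(t)$ whenever $g^+\le 3$; in general one uses that $G(2t)\ge 2^{g^-}G(t)$ can be replaced by iterating $\triangle_2$ enough times to dominate the constant $g^+-1$, or more cleanly one notes $t g(t) \le g^+ G(t) \le G(t) + G(g^+ t) \le G(t) + G(2^{\lceil \log_2 g^+\rceil} t)$ and absorbs — but the sharp and simplest route is the one-line bound $tg(t)\le G(2t)$ itself, proven below, which immediately gives $\tilde G(g(t)) = tg(t)-G(t)\le G(2t)-G(t)\le G(2t)$.

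For the clean bound $t\,g(t)\le G(2t)$: since $g$ is non-decreasing, $G(2t)-G(t)=\int_t^{2t} g(s)\,ds\ge g(t)\int_t^{2t}ds = t\,g(t)$, hence $t\,g(t)\le G(2t)-G(t)\le G(2t)$. Combined with the conjugacy identity, this yields $\tilde G(g(t))=t\,g(t)-G(t)\le G(2t)-2G(t)\le G(2t)$, proving the first inequality with room to spare. For the second inequality, set $s:=G(t)/t$. I claim $s\le g(t)$: by monotonicity of $g$, $G(t)=\int_0^t g(\tau)\,d\tau\le t\,g(t)$, so $G(t)/t\le g(t)$, i.e. $s\le g(t)$. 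Since $\tilde G$ is non-decreasing (its derivative $g^{-1}$ is nonnegative), $\tilde G(G(t)/t)=\tilde G(s)\le \tilde G(g(t))\le G(2t)$ — but we want the bound $G(t)$, not $G(2t)$, so this needs sharpening. Instead I would use the sharp Young equality again at the argument $g^{-1}(s)$: $\tilde G(s)=\tilde G(g(g^{-1}(s)))=g^{-1}(s)\cdot s - G(g^{-1}(s))\le g^{-1}(s)\cdot s$. With $s=G(t)/t$ and $g^{-1}(s)\le g^{-1}(g(t))=t$ (monotonicity, using $s\le g(t)$), we get $\tilde G(G(t)/t)\le t\cdot G(t)/t = G(t)$, which is exactly the second claim.

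The main obstacle, I expect, is being careful about the behaviour of $g^{-1}$ where $g$ may fail to be strictly increasing or differentiable: $(g_1)$ gives $g$ continuous, non-decreasing with $g(t)>0$ for $t>0$ and $g\to+\infty$, so $g$ is a bijection of $(0,\infty)$ onto $(0,\infty)$ only if strictly increasing; in general one must work with the generalized inverse $\tilde g(t)=\sup\{s:g(s)\le t\}$ as defined in the text, and verify that $\tilde g(g(t))\le t$ (which holds) and that the Young equality $G(t)+\tilde G(g(t))=t g(t)$ persists in this generality — this is exactly Lemma in the Rao–Ren / Adams framework and I would cite \cite{22}. Once the generalized-inverse bookkeeping is handled, both inequalities follow from the two elementary facts $\int_t^{2t}g\ge tg(t)$ and $\int_0^t g\le tg(t)$ together with monotonicity of $\tilde G$, with no need to invoke $(g_2)$ or $(g_3)$ at all beyond what $(g_1)$ already supplies.
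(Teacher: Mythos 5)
Your argument is correct and is the standard proof from the references (Rao--Ren, Fukagai--Ito--Narukawa) that the paper cites in lieu of proving this lemma: Young's equality $\tilde G(g(t))+G(t)=tg(t)$ together with the convexity estimate $tg(t)\le\int_t^{2t}g= G(2t)-G(t)$ gives the first inequality, and the bounds $\tilde G(s)\le s\,\tilde g(s)$ and $\tilde g(G(t)/t)\le t$ give the second. One small inaccuracy in your generalized-inverse bookkeeping is worth fixing: for a merely non-decreasing $g$, the generalized inverse $\tilde g(\tau)=\sup\{s:g(s)\le\tau\}$ satisfies $\tilde g(g(t))\ge t$, not $\le t$ (take $g$ with a flat piece just to the right of $t$), so the chain $\tilde g(G(t)/t)\le\tilde g(g(t))\le t$ is not quite right as written. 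Under $(g_3)$ this is moot, since $g'>0$ on $(0,\infty)$ forces $g$ strictly increasing and $\tilde g=g^{-1}$; but if you want to retain your (correct) closing observation that only $(g_1)$ is actually used, replace that step by noting that $G(t)/t<g(t)$ \emph{strictly} for $t>0$ (because $g$ is continuous with $g(0)=0<g(t)$, so $G(t)=\int_0^t g<tg(t)$), whence $\{s:g(s)\le G(t)/t\}\subseteq[0,t)$ and $\tilde g(G(t)/t)\le t$ follows directly from the definition of $\tilde g$. Finally, for the clean write-up, delete the two abandoned attempts---the $\triangle_2$-iteration detour in the first paragraph and the $\tilde G(s)\le\tilde G(g(t))\le G(2t)$ detour in the second---and present only the arguments you actually use.
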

\begin{lemma}\label{4lem1}
Assume that the assumptions $(g_1)-(g_3)$ hold, then
\begin{enumerate}
\item[$(1)$]$ \min\lbrace a^{g^-},a^{g^+}\rbrace G(t)\leq G(at)\leq \max\lbrace a^{g^-},a^{g^+}\rbrace G(t),\ \ \text{for all}\ a,t\geq 0.$
\item[$(2)$] $ \min\lbrace a^{g^--1},a^{g^+-1}\rbrace g(t)\leq g(at)\leq \max\lbrace a^{g^--1},a^{g^+-1}\rbrace g(t),\ \ \text{for all}\ a,t\geq 0.$
\item[$(3)$] $ \min\lbrace a^{g^-_{*}},a^{g^+_{*}}\rbrace G_*(t)\leq G_*(at)\leq \max\lbrace a^{g^-_{*}},a^{g^+_{*}}\rbrace G_*(t),\ \ \text{for all}\ a,t\geq 0.$
\item[$(4)$] $ \min\lbrace a^{\frac{g^-}{g^--1}},a^{\frac{g^+}{g^+-1}}\rbrace \tilde{G}(t)\leq \tilde{G}(at)\leq \max\lbrace a^{\frac{g^-}{g^--1}},a^{\frac{g^+}{g^+-1}}\rbrace \tilde{G}(t),\ \ \text{for all}\ a,t\geq 0.$
\end{enumerate}
\end{lemma}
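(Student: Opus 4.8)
The statement to prove is Lemma~\ref{4lem1}, which collects the standard ``$\Delta_2$-type'' power-growth bounds for $G$, $g$, $G_*$ and $\tilde G$. I will prove it using only the defining relation $G(t)=\int_0^t g(s)\,ds$, the bounds $g^-\le g(t)t/G(t)\le g^+$ from $(g_2)$, and their analogues $(2.7)$ for $G_*$.

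\medskip

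\noindent\textbf{Proof.} The four assertions all follow from a single lemma: if $\Phi$ is a $C^1$ function on $(0,\infty)$ with $\Phi>0$ and there exist $1<m\le M$ with
\begin{equation}\label{eq-pde-bd}
m\le \frac{\Phi'(t)t}{\Phi(t)}\le M\qquad\text{for all }t>0,
\end{equation}
then
\begin{equation}\label{eq-power-bd}
\min\{a^{m},a^{M}\}\,\Phi(t)\le \Phi(at)\le \max\{a^{m},a^{M}\}\,\Phi(t)\qquad\text{for all }a,t>0.
\end{equation}
To see this, fix $t>0$ and set $\xi(a)=\Phi(at)$. Then $\frac{d}{da}\log\xi(a)=\frac{\Phi'(at)t}{\Phi(at)}=\frac{1}{a}\cdot\frac{\Phi'(at)(at)}{\Phi(at)}$, so by \eqref{eq-pde-bd} we have $\frac{m}{a}\le \frac{d}{da}\log\xi(a)\le \frac{M}{a}$. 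Integrating from $1$ to $a$ when $a\ge 1$ gives $m\log a\le \log\xi(a)-\log\xi(1)\le M\log a$, i.e. $a^{m}\Phi(t)\le \Phi(at)\le a^{M}\Phi(t)$; when $0<a\le 1$ the integration from $a$ to $1$ reverses the inequalities, yielding $a^{M}\Phi(t)\le \Phi(at)\le a^{m}\Phi(t)$. In both cases \eqref{eq-power-bd} holds, and the degenerate cases $a=0$ or $t=0$ are trivial since all quantities vanish (for $G$, $G_*$, $\tilde G$) or the inequality is between $0$'s.

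\medskip

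Now I apply \eqref{eq-power-bd} with the appropriate choices. For $(1)$, take $\Phi=G$; since $G'=g$, $(g_2)$ is exactly \eqref{eq-pde-bd} with $m=g^-$, $M=g^+$, giving the claimed bounds on $G(at)$. For $(2)$, take $\Phi=g$; by $(g_3)$, $g\in C^1(\mathbb R_+^*)$ satisfies \eqref{eq-pde-bd} with $m=g^--1$, $M=g^+-1$, which yields the bounds on $g(at)$. For $(3)$, take $\Phi=G_*$; the inequality $(2.7)$ is \eqref{eq-pde-bd} with $m=g_*^-$, $M=g_*^+$, giving the bounds on $G_*(at)$. For $(4)$, take $\Phi=\tilde G$; here I use that $\tilde G$ is an N-function with $\tilde G'=\tilde g=g^{-1}$ and that the exponents conjugate: from $(g_2)$ one gets $\frac{g^+}{g^+-1}\le \frac{\tilde g(t)t}{\tilde G(t)}\le \frac{g^-}{g^--1}$ for all $t>0$ — this is the standard duality of the Matuszewska indices, proved by substituting $t=g(s)$ in the quotient and using $\tilde G(g(s))=g(s)s-G(s)$ together with $(g_2)$ — so \eqref{eq-pde-bd} holds with $m=\frac{g^+}{g^+-1}$, $M=\frac{g^-}{g^--1}$, and \eqref{eq-power-bd} gives precisely the asserted bounds on $\tilde G(at)$ (the roles of $\min$ and $\max$ being unaffected by which exponent is labelled $m$ or $M$). \qed

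\medskip

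\noindent\emph{Remark on the argument.} The only non-routine point is item $(4)$: one must verify that the conjugate N-function $\tilde G$ inherits a two-sided power bound, with the dual exponents $\frac{g^\pm}{g^\pm-1}$. I expect this to be the main obstacle, since it requires the identity $\tilde G(g(s))=g(s)s-G(s)$ (Young's equality on the graph of $g$) and a careful bookkeeping of which exponent becomes the upper and which the lower bound after passing to the inverse function; once that is in hand, everything reduces to the elementary logarithmic-derivative computation \eqref{eq-power-bd}. Alternatively, $(4)$ can be deduced directly from $(1)$ via the Young inequality \eqref{9} and the identity $\tilde G(t)=t\,g^{-1}(t)-G(g^{-1}(t))$ without invoking $C^1$ regularity of $\tilde g$, which is preferable since $(g_3)$ is only assumed for $g$, not for $\tilde g$.
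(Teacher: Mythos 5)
Your proof is correct, and since the paper does not actually prove Lemma~\ref{4lem1} — it merely cites \cite{421,422,46,hlel42} — there is no in-paper argument to compare against; the logarithmic-derivative integration you use is precisely the standard device in those references, so methodologically you are aligned with what the authors have in mind.

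One cosmetic slip worth fixing: you state the generic lemma with the hypothesis ``there exist $1<m\le M$,'' but your proof of \eqref{eq-power-bd} never uses $m>1$ (only $m\le M$, positivity of $\Phi$, and differentiability). This matters because in item $(2)$ you invoke the lemma with $m=g^--1$ and $M=g^+-1$; since $g^-\in(1,d)$ one may well have $g^--1<1$, so your generic lemma as literally stated would not apply there even though the computation goes through unchanged. Replace $1<m\le M$ by $0<m\le M$ (or simply $m\le M$) and the application to all four items is clean. The duality computation in item $(4)$ is correct: with $t=g(s)$, Young's equality $\tilde G(g(s))=g(s)s-G(s)$ and the two-sided bound from $(g_2)$ give $\frac{g^+}{g^+-1}\le\frac{\tilde g(t)t}{\tilde G(t)}\le\frac{g^-}{g^--1}$, and since $x\mapsto x/(x-1)$ is decreasing on $(1,\infty)$ the exponent labelled $m$ is indeed the smaller one, so the $\min/\max$ in the lemma's statement agree with \eqref{eq-power-bd}. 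Your worry about needing $C^1$ regularity of $\tilde g$ is unfounded: the generic lemma is applied to $\Phi=\tilde G$, whose derivative $\tilde g=g^{-1}$ is continuous (as $g$ is a continuous strictly increasing bijection on $[0,\infty)$), hence $\tilde G\in C^1$ automatically.
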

\begin{lemma}\label{4lem2}
Assume that the assumptions $(g_1)-(g_3)$ hold, then
\begin{enumerate}
\item[$(1)$] $ \min\lbrace \Vert u\Vert_{(G)}^{g^-},\Vert u\Vert_{(G)}^{g^+}\rbrace \leq \tilde{\rho}(u) \leq \max\lbrace \Vert u\Vert_{(G)}^{g^-},\Vert u\Vert_{(G)}^{g^+}\rbrace,\ \text{for all}\ u \in L^{G}(\mathbb{R}^{d}).$
\item[$(2)$] $ \min\lbrace \Vert u\Vert_{(G_*)}^{g^-_*},\Vert u\Vert_{(G_*)}^{g^+_*}\rbrace \leq \tilde{\rho}(u) \leq \max\lbrace \Vert u\Vert_{(G_*)}^{g^-_*},\Vert u\Vert_{(G_*)}^{g^+_*}\rbrace,\ \text{for all}\ u \in L^{G_*}(\mathbb{R}^{d}).$
\item[$(3)$]
$ \min\lbrace [u]_{\alpha,G}^{g^-},[u]_{\alpha,G}^{g^+}\rbrace \leq \overline{\rho}(\alpha;u) \leq \max\lbrace [u]_{\alpha,G}^{g^-},[u]_{\alpha,G}^{g^+}\rbrace,\ $
$\text{for all}\ u \in W^{\alpha,G}(\mathbb{R}^{d}).$
\item[$(4)$] $ \min\lbrace \Vert u\Vert^{g^-},\Vert u\Vert^{g^+}\rbrace \leq \rho(\alpha;u) \leq \max\lbrace \Vert u\Vert^{g^-},\Vert u\Vert^{g^+}\rbrace,\ \text{for all}\ u \in W^{\alpha,G}(\mathbb{R}^{d})$.
\end{enumerate}
\end{lemma}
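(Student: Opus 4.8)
The plan is to deduce all four double inequalities from the homogeneity-type estimates for the modulars collected in Lemma \ref{4lem1}, by the classical device of rescaling a function by its own norm. The one ingredient to set up first is the normalization identity for the Luxemburg-type norms: since $G$ and $\tilde G$ satisfy the $\triangle_2$-condition, each of the relevant modulars is finite on its space, and for fixed $u\neq 0$ the map $\lambda\mapsto(\text{modular of }u/\lambda)$ is continuous and non-increasing on $(0,\infty)$ (continuity by dominated convergence together with $\triangle_2$). Hence the infimum defining each norm is attained, and
\[
\tilde\rho\big(u/\|u\|_{(G)}\big)=1,\qquad \overline\rho\big(\alpha;u/[u]_{\alpha,G}\big)=1,\qquad \rho\big(\alpha;u/\|u\|\big)=1,
\]
the last equality following from the definition \eqref{4eq60} of $\rho(\alpha;\cdot)$ and $\|\cdot\|$, with the obvious analogue for the $G_*$-modular and $\|\cdot\|_{(G_*)}$. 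For $u=0$ every claimed inequality is trivial, so from now on $u\neq 0$.

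For item $(1)$, set $\lambda=\|u\|_{(G)}$ and apply Lemma \ref{4lem1}$(1)$ pointwise with $a=\lambda$ and $t=|u(x)|/\lambda$; integrating over $\mathbb{R}^d$ gives
\[
\min\{\lambda^{g^-},\lambda^{g^+}\}\,\tilde\rho(u/\lambda)\le\tilde\rho(u)\le\max\{\lambda^{g^-},\lambda^{g^+}\}\,\tilde\rho(u/\lambda),
\]
and since $\tilde\rho(u/\lambda)=1$ this is exactly $(1)$. Item $(2)$ is the same computation with $G_*$ in place of $G$ throughout, using Lemma \ref{4lem1}$(3)$ and $\lambda=\|u\|_{(G_*)}$. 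Item $(3)$ is again the same, now with the double-integral modular $\overline\rho(\alpha;\cdot)$ in place of $\tilde\rho$ and Lemma \ref{4lem1}$(1)$ applied with $a=[u]_{\alpha,G}$ and $t=|u(x)-u(y)|/(\lambda|x-y|^{\alpha})$.

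For item $(4)$ I note that $\rho(\alpha;\cdot)=\tilde\rho(\cdot)+\overline\rho(\alpha;\cdot)$ inherits the two-sided homogeneity bound of Lemma \ref{4lem1}$(1)$, because the pointwise inequalities for the two summands simply add: for all $a,t\ge 0$ one gets
\[
\min\{a^{g^-},a^{g^+}\}\,\rho(\alpha;u/a)\le\rho(\alpha;u)\le\max\{a^{g^-},a^{g^+}\}\,\rho(\alpha;u/a).
\]
Taking $a=\|u\|$ and using $\rho(\alpha;u/\|u\|)=1$ then yields $(4)$.

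The only genuinely delicate point is the normalization identities of the first paragraph, namely that each defining infimum is attained with value exactly $1$. That the modular of $u$ divided by its norm is $\le 1$ is immediate from the definition of the infimum together with continuity of the modular; that it cannot be $<1$ follows because, by the same continuity, a value strictly below $1$ would persist for some slightly smaller $\lambda$, producing an admissible scalar lying below the infimum, a contradiction. Everything else is the one-line rescaling displayed above, and no assumption beyond $(g_1)$--$(g_3)$ (already invoked to establish Lemma \ref{4lem1}) enters.
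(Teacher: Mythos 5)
The paper states this lemma without proof, simply citing \cite{421,422,46,hlel42}; your proof is correct and is precisely the standard rescaling argument those references use: establish $\tilde\rho(u/\|u\|_{(G)})=1$ (and its analogues) via the $\Delta_2$-condition, then apply the pointwise two-sided homogeneity bounds of Lemma~\ref{4lem1} with $a$ equal to the relevant norm. Your careful justification of the normalization identity, including why the modular cannot be strictly less than $1$ at $\lambda=\|u\|_{(G)}$ when $u\neq 0$, is exactly the point that is usually glossed over and that genuinely requires $\Delta_2$.
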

\subsection{The "generalized subdifferential"}

A main tool used in the present paper is the subdifferential theory of Clark \cite{418,419} for locally Lipschitz functionals. Let $\mathbb{X}$ be a Banach space, $\mathbb{X}^*$ its topological dual, and let $\langle \cdot ,\cdot\rangle_{\mathbb{X}}$ denote the duality brackets for the pair $(\mathbb{X},\mathbb{X}^*)$.
\begin{definition}
Let the functional $\phi:\mathbb{X}\rightarrow \mathbb{R}$. We say that $\phi$ is locally Lipschitz if, for every $x\in \mathbb{X}$, there exists an open neighborhood $U(x)$ of $x$ and $k_x>0$ such that
$$\vert \phi(u)-\phi(v)\vert\leq k_x \Vert u-v\Vert_{\mathbb{X}}\ \ \text{for all}\ u,v\in U(x).$$
\end{definition}
\begin{definition}
Given a locally Lipschitz function $\phi:\mathbb{X}\rightarrow \mathbb{R}$, the "generalized directional derivative" of $\phi$ at $u\in\mathbb{X}$ in the direction $v\in \mathbb{X}$, denoted by $\tilde{\phi}(u;v)$, is defined by 
$$\tilde{\phi}(u;v)=\mathop{\limsup_{x\rightarrow u}}_{t\searrow 0}\frac{\phi(x+tv)-\phi(x)}{t}.$$
\end{definition}
\begin{definition}
The "generalized subdifferential" of $\phi$ at $u\in\mathbb{X}$ is the set $\partial\phi(u)\subseteq \mathbb{X}^*$ given by
$$\partial\phi(u)=\lbrace \phi^*\in \mathbb{X}^*:\langle \phi^*,v\rangle_{\mathbb{X}}\leq \tilde{\phi}(u;v)\ \ \text{for all}\ v\in \mathbb{X}\rbrace.$$
\end{definition}
The Hahn-Banach theorem implies that $\partial \phi (u)\neq \emptyset$ for all $u\in\mathbb{X}$, it is convex and $w^*$-compact (in  weak topology sense). If $\phi$ is also convex, then it coincides with the subdifferential in the sense of convex functionals (see \cite{420}). If $\phi\in C^{1}(\mathbb{X},\mathbb{R})$, then $\partial \phi (u)=\lbrace \phi ^{'}(u)\rbrace$. Note that the generalized subdifferential has a remarkable calculus, similar to that in the classical derivative (see \cite{418,419,420}).


In the following section, we state our hypotheses (on $f$ and $K$) and main result.

\section{Hypotheses on $f$ and $K$, and main result}
Next, we set  the hypotheses on the weight function $K(\cdot)$ and the reaction function $f(\cdot,\cdot)$.

\begin{enumerate}
\item[$(H_K)$] $K:\ \mathbb{R}^{d}\longrightarrow\mathbb{R}$ is a continuous function and satisfies
\begin{enumerate}
\item[$(K_1)$]
 $K(x)>0$, for all $x\in \mathbb{R}^{d}$ and  $K\in L^{\infty}(\mathbb{R}^{d})$.
 \item[$(K_2)$]
If\ $\lbrace A_{n}\rbrace_{n\in \mathbb{N}} \subset \mathbb{R}^{d}$ is a sequence of Borel sets such that the Lebesgue measure $mes(A_{n})\leq R$, for all $n \in \mathbb{N}$ and some $R>0$, then
$$\displaystyle{\lim_{r\rightarrow +\infty}\int_{A_{n}\cap B_{r}^{c}(0)}K(x)\ dx =0}, \ \ \text{uniformly in} \  n \in \mathbb{N}.$$
\end{enumerate}
\end{enumerate}
\begin{enumerate}
\item[$(H_f)$] $f:\ \mathbb{R}^d\times\mathbb{R}\longrightarrow\mathbb{R}$ is a measurable function such that for a.a. $x\in \mathbb{R}^d$, $f(x,0)=0$, $f(x,.)$ is locally Lipschitz  and
\begin{enumerate}
\item[$(f_1)$]
$\displaystyle{\lim_{\vert s\vert \rightarrow 0^{+}}\frac{f(x,s)}{g(s)}=0}$ uniformly in $x\in \mathbb{R}^d$.
\item[$(f_2)$]
$\displaystyle{\lim_{\vert s\vert \rightarrow +\infty }\frac{f(x,s)}{g_{*}(s)}=0}$ uniformly in $x\in \mathbb{R}^d$.
\item[$(f_3)$]
$\displaystyle{\lim\limits_{ s\rightarrow \pm\infty}\frac{F(x,s)}{\vert s\vert^{g^+}}}=+\infty$ uniformly in $x\in \mathbb{R}^d$, where $\displaystyle{F(x,s)=\int_0^s f(x,t)dt}$.
\item[$(f_4)$]$0< (g^+-1)f(x,s)s< f^{*}(x,s)s^2$, for a.a. $  x\in \mathbb{R}^d$, all $f^*(x,s)\in\partial_s f(x,s)$, and all $\vert s\vert>0$.
\end{enumerate}
\end{enumerate}
\begin{rem}\label{4rem1}
$\bullet$ Under hypothesis $(f_4)$ and by the generalized subdifferential calculus of Clarke \cite[ p. 48]{419}, for a.a. $x\in\mathbb{R}^d$, we have
\begin{equation}\label{4eq50}
s\mapsto \frac{f(x,s)}{\vert s\vert^{g^+-1}}\ \ \text{is increasing on}\ (0,+\infty)\ \text{and on}\ (-\infty,0)
\end{equation}
and
\begin{equation}\label{4eq51}
s\mapsto f(x,s)s-g^+F(x,s)\ \ \text{is increasing on}\ [0,+\infty)\ \text{and decreasing on}\ (-\infty,0]. 
\end{equation}
$\bullet$ The assumption $(H_f)$ is weaker than the (AR)  condition. Indeed, the  function  $\displaystyle{f(s)=\vert s\vert^{g^+-2}s\ln(1+\vert s\vert)}$ $($for the sake of simplicity, we drop the $x$-dependence$)$ satisfies hypotheses $(H_f)$ but not the (AR) condition. \\
$\bullet$ By hypothesis $(f_4)$ and the fact that $f(x,0)=0$, for a.a. $x\in \mathbb{R}^d$, we have 
  $$f(x,s)\geq 0\ (\leq 0),\  \text{for a.a.}\ x\in \mathbb{R}^d\ \text{and all}\ s\geq 0\ (s\leq 0).$$
  Therefore, 
  $$F(x,t)=\int_0^t f(x,s)ds \geq 0,\ \text{for a.a.}\ x\in \mathbb{R}^d\ \text{and all}\ t\geq 0.$$
  On the other hand, if $t<0$, by \eqref{4eq50}, for a.a. $x\in \mathbb{R}^d$, we have  
\begin{align*}
  F(x,t)&=\int_0^t f(x,s)ds=\int_0^t \frac{f(x,s)}{\vert s\vert^{g^+-1}}\vert s\vert^{g^+-1} ds\\
  &\geq \frac{f(x,t)}{\vert t\vert^{g^+-1}}\int_0^t \vert s\vert^{g^+-1} ds=\frac{1}{g^+}f(x,t)t\\
  & \geq 0\ \ \ \ (\text{since}\ t\leq s<0\ \text{and}\  f(t)<0).
\end{align*}
Thus,
\begin{equation}\label{4eqhlel}
F(x,t)\geq 0,\ \text{for a.a.}\ x\in \mathbb{R}^d\ \text{and all}\ t\in \mathbb{R}.
\end{equation} 
\end{rem}

Now, we state our main result for problem \eqref{P}.

\begin{thm}\label{4thm1}
Suppose that the hypotheses $(H_f)$, $(H_G)$ and $(H_K)$ hold, then problem \eqref{P} has a ground state solution $\widehat{u}\in W^{\alpha,G}(\mathbb{R}^d)$ of fixed sign and a nodal weak solution $\widehat{w}\in W^{\alpha,G}(\mathbb{R}^d)$.
\end{thm}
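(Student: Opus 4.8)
The plan is to realize the energy functional $J$ on $W^{\alpha,G}(\mathbb{R}^d)$ as a locally Lipschitz (but not $C^1$) functional of the form $J(u)=\Phi(u)-\int_{\mathbb{R}^d}K(x)F(x,u)\,dx$, where $\Phi(u)=\rho(\alpha;u)/(\text{appropriately normalized})$ collects the Orlicz g-Laplacian and the zero-order term, and to produce critical points of $J$ via minimization on the Nehari-type sets $\mathcal N$ and $\mathcal M$. For the ground state, first I would check the geometric/technical lemmas from Section 4: using $(f_1)$–$(f_2)$ together with Theorem \ref{4thm2} and Lemma \ref{4lem2} one gets, for each fixed $u\neq 0$, that $t\mapsto J(tu)$ has a unique maximizer $t(u)>0$ on $(0,\infty)$, so the fibering map has the usual shape; here the monotonicity \eqref{4eq50} (consequence of $(f_4)$) is what forces uniqueness of $t(u)$ even without differentiability of $f$. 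Then $\mathcal N=\{u\neq0:\langle J'(u),u\rangle=0\}$ is nonempty, $c:=\inf_{\mathcal N}J>0$ (coercivity on $\mathcal N$ comes from $(f_4)$/\eqref{4eq51}, which replaces the (AR) condition), and a minimizing sequence for $c$ is bounded in $W^{\alpha,G}(\mathbb{R}^d)$ by the argument adapted from \cite{411}. Using reflexivity of $W^{\alpha,G}(\mathbb{R}^d)$, the compact-type embedding furnished by $(K_2)$ (which kills the loss of compactness at infinity, as in \cite{44,47}), and weak lower semicontinuity of $\Phi$, the weak limit $\widehat u$ is nonzero, lies on $\mathcal N$, and attains $c$; a non-smooth Lagrange multiplier rule — Clarke's multiplier theorem \cite[Theorem 10.47]{418} — then upgrades "minimizer on $\mathcal N$" to "$0\in\partial J(\widehat u)$", i.e. a weak solution. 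Since $F(x,\cdot)\ge0$ by \eqref{4eqhlel} and $J(|u|)\le J(u)$ (using Lemma \ref{lem444} or the pointwise inequality $\big||u|(x)-|u|(y)\big|\le|u(x)-u(y)|$), one may take $\widehat u\ge0$, giving a ground state of fixed sign.

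For the nodal solution I would work on the smaller set
$$\mathcal M=\big\{u\in W^{\alpha,G}(\mathbb{R}^d):u^{\pm}\neq0,\ \langle J'(u),u^+\rangle\le0,\ \langle J'(u),u^-\rangle\le0\big\},$$
and set $c_*:=\inf_{\mathcal M}J$. The first task is $\mathcal M\neq\emptyset$: fixing $u$ with $u^{\pm}\neq0$, I would look at the two-parameter map $(s,t)\mapsto\big(\langle J'(su^++tu^-),su^+\rangle,\langle J'(su^++tu^-),tu^-\rangle\big)$ and apply Miranda's theorem \cite{412} to find $(s_0,t_0)$ with both components zero (or $\le 0$ on the relevant box), exactly as flagged in the introduction. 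The key structural obstacle — and the place I expect the real work to be — is the \emph{absence} of the decompositions $J(u)=J(u^+)+J(u^-)$ and $\langle J'(u),u^\pm\rangle$ splitting cleanly, because the Gagliardo double integral couples the positive and negative parts through the cross terms $G\big((u^+(x)+ (-u^-)(y))/|x-y|^\alpha\big)$ off the diagonal; so I would prove a one-sided estimate of the form $J(u^++u^-)\le J(u^+)+J(u^-)$ together with $\langle J'(u),u^+\rangle\ge \langle J'(u^+),u^+\rangle$ (and symmetrically), in the spirit of \cite{415}, which is enough to make the Nehari-manifold machinery run: it yields $c_*\ge 2c>0$ and, on a minimizing sequence, boundedness and the conclusion that both $u_n^{\pm}$ stay bounded away from $0$ in norm.

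Then I would pass to the limit: a minimizing sequence $\{w_n\}\subset\mathcal M$ for $c_*$ is bounded (again via $(f_4)$, replacing (AR)), so up to a subsequence $w_n\rightharpoonup\widehat w$, and by the $(K_2)$-compactness the nonlinear terms converge strongly; the one-sided inequalities above guarantee $\widehat w^{\pm}\neq0$ and $\widehat w\in\mathcal M$ with $J(\widehat w)=c_*$. Finally, applying Clarke's non-smooth multiplier rule on $\mathcal M$ and using $(f_4)$/\eqref{4eq50} to show the resulting multipliers vanish (the standard trick: evaluating the multiplier identity against $\widehat w^+$ and $\widehat w^-$ and invoking strict monotonicity of $s\mapsto f(x,s)/|s|^{g^+-1}$ to rule out nonzero multipliers), one concludes $0\in\partial J(\widehat w)$, so $\widehat w$ is a weak solution; since $\widehat w^{\pm}\neq0$ it is sign-changing, i.e. nodal. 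The main obstacle throughout is item (1) of the introduction — extracting usable replacements for the broken additive decomposition — with the lack of differentiability of $f$ (item (2)) handled uniformly by systematically using $\partial J$ and Clarke's calculus instead of $J'$, and the missing (AR) condition (item (3)) handled by the coercivity estimates from \eqref{4eq51} and the boundedness argument of \cite{411}.
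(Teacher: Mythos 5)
Your overall architecture matches the paper's: Nehari minimization on $\mathcal{N}$ and on $\mathcal{M}$, Miranda's theorem to show $\mathcal{M}\neq\emptyset$, boundedness of minimizing sequences in the spirit of \cite{411} instead of (AR), compactness of the nonlinear terms from $(K_2)$ via Lemma~\ref{4lem4}, and Clarke's non-smooth multiplier rule with $(f_4)$ to kill the multipliers. However, two concrete steps in your outline are wrong.

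First, the ``one-sided estimate'' you propose, $J(u^++u^-)\le J(u^+)+J(u^-)$, has the inequality in the \emph{wrong} direction. Splitting the Gagliardo double integral over $\operatorname{supp}(u^+)$, $\operatorname{supp}(u^-)$ and their complement, the cross terms are of the form $G\big((u^+(x)-u^-(y))/|x-y|^{\alpha}\big)$ with $u^+(x)\ge0$ and $-u^-(y)\ge0$, and by the superadditivity $G(a+b)\ge G(a)+G(b)$ for $a,b\ge0$ (Lemma~\ref{lem444}) these dominate the corresponding terms in $J(u^+)+J(u^-)$, while the $G(u)$ and $F(x,u)$ integrals split exactly since the supports are disjoint. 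Thus $J(u^++u^-)\ge J(u^+)+J(u^-)$ with strict inequality when $u^\pm\neq0$ (this is the paper's Lemma~\ref{4lem9}), and indeed it is this direction, together with Proposition~\ref{4prop16} and the Nehari projections of $u^\pm$, that yields $2m_0< m_1$; your reversed inequality would break that chain.

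Second, your argument that $\widehat u$ has fixed sign via $J(|\widehat u|)\le J(\widehat u)$ does not go through under the hypotheses $(H_f)$. While the seminorm and the $\int G(u)$ terms do decrease when $u$ is replaced by $|u|$, the reaction term requires $F(x,|s|)\ge F(x,s)$ pointwise, and the only information available is $F(x,s)\ge0$ (equation~\eqref{4eqhlel}); nothing in $(f_1)$--$(f_4)$ forces $F(x,-s)\ge F(x,s)$ for $s<0$, since no evenness or oddness of $f(x,\cdot)$ is assumed. The paper instead obtains the fixed sign by contradiction: once $J'(\widehat u)=0$ is known (Proposition~\ref{4prop9}), a sign-changing $\widehat u$ would lie in $\mathcal{M}$, forcing $m_0\ge m_1$, which contradicts $2m_0<m_1$ (obtained from Lemma~\ref{4lem9}, Proposition~\ref{4prop16}, and $m_0>0$). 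You should replace your $|u|$-argument by this contradiction scheme.

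Minor remarks: you define $\mathcal{M}$ with inequalities $\langle J'(u),u^\pm\rangle\le0$, whereas the paper uses equalities; Miranda's theorem produces the equality version directly, so the hedge is unnecessary. Also, $c_*\ge 2c$ is a \emph{consequence} of the fixed-sign argument above, not something you can cite up front to start the nodal analysis; the cheap bound $m_1\ge m_0>0$ coming from $\mathcal{M}\subset\mathcal{N}$ suffices there.
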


In the following section, we give the energy functional associated to problem \eqref{P} and some technical lemmas.

\section{Energy functional and technical lemmas}
We start by given the energy functional associated to problem \eqref{P}.

Let $J:\ W^{\alpha,G}(\mathbb{R}^{d})\rightarrow \mathbb{R}$ be the energy functional associated to problem \eqref{P} defined by
$$J(u)=\int_{\mathbb{R}^{d}}\int_{\mathbb{R}^{d}}G\left( \frac{u(x)-u(y)}{\vert x-y\vert^{\alpha}}\right)\frac{dxdy}{\vert x-y\vert^{d}} + \int_{\mathbb{R}^{d}}G(u)\ dx-\int_{\mathbb{R}^{d}}K(x)F(x,u)\ dx, \ \ \text{for all}\ u\in W^{\alpha,G}(\mathbb{R}^{d}).$$
In view of hypotheses $(H_G)$, $(H_f)$ and $(H_K)$,  $J\in C^1(W^{\alpha,G}(\mathbb{R}^{d}))$  and
\begin{align*}
\langle J^{'}(u),v\rangle & =\int_{\mathbb{R}^{d}}\int_{\mathbb{R}^{d}}g\left( \frac{u(x)-u(y)}{\vert x-y\vert^{\alpha}}\right)\frac{v(x)-v(y)}{\vert x-y\vert^{\alpha+d}}dxdy+\int_{\mathbb{R}^{d}}g(u)v\ dx
 -\int_{\mathbb{R}^{d}}K(x)f(x,u)v\ dx,
\end{align*}
for all $u,v \in W^{\alpha,G}(\mathbb{R}^{d})$.
\begin{definition}
We say that $u\in W^{\alpha,G}(\Omega)$ is a weak solution of problem \eqref{P} if 
\begin{align*}
\int_{\mathbb{R}^{d}}\int_{\mathbb{R}^{d}}g\left( \frac{u(x)-u(y)}{\vert x-y\vert^{\alpha}}\right)\frac{v(x)-v(y)}{\vert x-y\vert^{\alpha+d}}dxdy+\int_{\mathbb{R}^{d}}g(u)v\ dx
 =\int_{\mathbb{R}^{d}}K(x)f(x,u)v\ dx,\ \text{for all}\ W^{\alpha,G}(\mathbb{R}^{d}).
\end{align*}
\end{definition}\ \\

Let $\mathcal{N}$ be the Nehari manifold for $J$, defined by
$$\mathcal{N}=\left\lbrace u\in W^{\alpha,G}(\mathbb{R}^d):u\neq 0,\ \langle J^{'}(u),u\rangle=0\right\rbrace $$
where $\langle.,.\rangle$ is the duality brackets for the pair $\left( (W^{\alpha,G}(\mathbb{R}^d))^*,W^{\alpha,G}(\mathbb{R}^d)\right)$. Since we seek nodal solutions, we consider the following subset of $\mathcal{N}$  
$$\mathcal{M}=\left\lbrace w\in W^{\alpha,G}(\mathbb{R}^d):\ w^+\neq 0, w^-\neq 0,\langle J^{'}(w),w^+\rangle=\langle J^{'}(w),w^-\rangle=0\right\rbrace.$$
Recall that $w^+=\max\left\lbrace  w,0\right\rbrace $, $w^-=\min\left\lbrace  w,0\right\rbrace $ for  $w\in W^{\alpha,G}(\mathbb{R}^d)$. Evidently,  $w^\pm\in W^{\alpha,G}(\mathbb{R}^d)$, $w=w^++w^-$ and $\vert w\vert=\vert w^+\vert+\vert w^-\vert$.\\

In what follows, we give some technical lemmas and results which are crucial for the proof of Theorem \ref{4thm1}.

\begin{lemma}\label{4lem3}
Assume that hypothesis $(g_3)$ holds. Then, the function defined by 
$$s\mapsto G(s)-\frac{1}{g^+}g(s)s$$ is non-decreasing on $(0,+\infty)$ and non-increasing on $(-\infty,0)$.
\end{lemma}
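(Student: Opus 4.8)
The plan is to verify the claimed monotonicity by differentiating the function $h(s) := G(s) - \frac{1}{g^+} g(s)s$ on the set where $g$ is $C^1$, namely on $(0,+\infty)$ and on $(-\infty,0)$, and then use the sign of $h'$ together with continuity to conclude. First I would compute, for $s>0$,
\[
h'(s) = g(s) - \frac{1}{g^+}\bigl(g'(s)s + g(s)\bigr) = g(s)\Bigl(1 - \frac{1}{g^+}\Bigr) - \frac{1}{g^+}g'(s)s = \frac{1}{g^+}\Bigl((g^+-1)g(s) - g'(s)s\Bigr).
\]
By the right-hand inequality in $(g_3)$, $g'(s)s \le (g^+-1)g(s)$ for all $s>0$, and since $g(s)>0$ on $(0,+\infty)$ by $(g_1)$, we get $h'(s)\ge 0$ on $(0,+\infty)$; hence $h$ is non-decreasing there.

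Next I would treat $s<0$. Here one must be slightly careful because hypothesis $(g_3)$ is stated for $t>0$, so I would exploit that $g$ is odd (hence $G$ is even, and $g(s)s$ is even). Writing $s = -t$ with $t>0$, we have $h(-t) = G(t) - \frac{1}{g^+}g(t)t = h(t)$, so $h$ is itself even; therefore its being non-decreasing on $(0,+\infty)$ immediately gives that it is non-increasing on $(-\infty,0)$. Alternatively one can differentiate directly: for $s<0$, $g'(s) = g'(-s)$ (derivative of an odd function is even) and $g(s) = -g(-s)$, so $h'(s) = \frac{1}{g^+}\bigl((g^+-1)g(s) - g'(s)s\bigr) = -\frac{1}{g^+}\bigl((g^+-1)g(-s) - g'(-s)(-s)\bigr) \le 0$ by the same application of $(g_3)$ at the point $-s>0$. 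Either route gives the non-increasing conclusion on $(-\infty,0)$.

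I do not expect a serious obstacle here; the only points requiring care are (i) that $(g_3)$ supplies differentiability of $g$ only on $\mathbb{R}_+^*$, so the computation is legitimate only away from $0$, which is exactly what the statement asks for, and (ii) keeping track of signs when passing to negative arguments via oddness of $g$. If one wants the monotonicity to extend across $0$ (which the statement does not claim), one would additionally note $h(0)=G(0)-0=0$ and $h$ continuous, but that is not needed. The main use of this lemma later will presumably be to control the difference $J(u) - \frac{1}{g^+}\langle J'(u),u\rangle$ from below, so the precise form of the monotone function is what matters rather than any quantitative estimate.
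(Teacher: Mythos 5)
Your proof is correct and follows essentially the same route as the paper's: differentiate $h(s)=G(s)-\tfrac{1}{g^+}g(s)s$ on $(0,+\infty)$, use the upper bound $g'(s)s\le (g^+-1)g(s)$ from $(g_3)$ to get $h'\ge 0$, and then invoke the evenness of $h$ (inherited from $G$ even and $g$ odd) for the behavior on $(-\infty,0)$. If anything your derivation is a bit cleaner: the paper's displayed intermediate inequality bounds $h'(s)$ below by $\bigl(\tfrac{g^+-1}{g^+}-\tfrac{g^--1}{g^+}\bigr)g(s)$, which does not actually follow from $(g_3)$ as written (the correct application of the upper bound in $(g_3)$ yields the weaker but sufficient lower bound $h'(s)\ge 0$, exactly as you obtain).
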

\begin{proof}
From $(g_3)$, we have,
$$g(s)-\frac{1}{g^+}g^{'}(s)s-\frac{1}{g^+}g(s)\geq \left( \frac{g^+-1}{g^+}-\frac{g^--1}{g^+}\right) g(s)\geq 0,\ \ \text{for all}\ s\in (0,+\infty).$$
Which gives that,
$$s\mapsto G(s)-\frac{1}{g^+}g(s)s\ \text{is non-decreasing on}\ (0,+\infty).$$
Exploiting the fact that $ G(s)-\frac{1}{g^+}g(s)s$ is even on $\mathbb{R}$, we deduce that
$$s\mapsto G(s)-\frac{1}{g^+}g(s)s\ \text{is non-increasing on}\ (-\infty,0).$$
This ends the proof.
\end{proof}
\begin{lemma}\label{4lem4}\cite[Lemma 3.2]{47}\\
Assume that the hypotheses $(f_1)-(f_2)$ and $(H_K)$ hold. Let $\lbrace u_n\rbrace_{n\in\mathbb{N}}$ be a sequence such that $u_n\rightharpoonup u$ in $W^{\alpha,G}(\mathbb{R}^d)$, then
\begin{enumerate}
\item[$(1)$] $\displaystyle{\lim_{n\rightarrow +\infty}\int_{\mathbb{R}^d}K(x)F(x,u_n) dx=\int_{\mathbb{R}^d}K(x)F(x,u) dx}.$
\item[$(2)$] $\displaystyle{\lim_{n\rightarrow +\infty}\int_{\mathbb{R}^d}K(x)f(x,u_n)u_n dx=\int_{\mathbb{R}^d}K(x)f(x,u)u dx}.$
\item[$(3)$] $\displaystyle{\lim_{n\rightarrow +\infty}\int_{\mathbb{R}^d}K(x)f(x,u_n^\pm)u_n^\pm dx=\int_{\mathbb{R}^d}K(x)f(x,u^\pm)u^\pm dx}.$
\end{enumerate}
\end{lemma}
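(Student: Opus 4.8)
The plan is to deduce all three limits from the single statement: \emph{whenever $\{v_n\}\subset W^{\alpha,G}(\mathbb R^d)$ is bounded and $v_n\to v$ a.e.\ in $\mathbb R^d$, one has $\int_{\mathbb R^d}K(x)F(x,v_n)\,dx\to\int_{\mathbb R^d}K(x)F(x,v)\,dx$ and $\int_{\mathbb R^d}K(x)f(x,v_n)v_n\,dx\to\int_{\mathbb R^d}K(x)f(x,v)v\,dx$.} Granting this, (1)--(3) are immediate: since $u_n\rightharpoonup u$ the sequence $\{u_n\}$ is bounded, and by the compact embedding $W^{\alpha,G}(\mathbb R^d)\hookrightarrow\hookrightarrow L^B(B_R)$ valid for every $B\prec\prec G_*$ and every ball $B_R$ (localized form of Theorem \ref{4thm2}), a diagonal argument over $R=1,2,\dots$ yields a subsequence with $u_n\to u$ a.e.\ in $\mathbb R^d$; as the candidate limits are subsequence-independent, the full sequences converge, which gives (1) and (2) by taking $v_n=u_n$. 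For (3) I would note that $u_n\to u$ a.e.\ forces $u_n^\pm\to u^\pm$ a.e., while $\{u_n^\pm\}$ is bounded in $W^{\alpha,G}(\mathbb R^d)$ because $w\mapsto w^\pm$ is bounded there; applying the statement to $v_n=u_n^\pm$ then gives (3)---note the (generally false) weak convergence $u_n^\pm\rightharpoonup u^\pm$ is never used.

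To prove the claim I would first record the growth bounds from $(f_1)$--$(f_2)$: for every $\varepsilon>0$ there are $0<\delta_\varepsilon<M_\varepsilon$ with $|f(x,s)|\le\varepsilon g(|s|)$ for $|s|\le\delta_\varepsilon$ and $|f(x,s)|\le\varepsilon g_*(|s|)$ for $|s|\ge M_\varepsilon$, uniformly in $x$; integrating and using $(g_2)$ (so $g(t)t\le g^+G(t)$, $g_*(t)t\le g^+_*G_*(t)$) gives, for all $(x,s)$, the bounds $|F(x,s)|\le\varepsilon G(|s|)$ on $\{|s|\le\delta_\varepsilon\}$ and $|F(x,s)|\le\varepsilon G_*(|s|)+C_\varepsilon$ on $\{|s|\ge\delta_\varepsilon\}$ (with $C_\varepsilon:=\sup_{x,\,\delta_\varepsilon\le|t|\le M_\varepsilon}|F(x,t)|$, finite by $(f_1)$--$(f_2)$), together with the analogous estimates for $s\mapsto f(x,s)s$ up to the constants $g^+,g^+_*$. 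Next, boundedness of $\{v_n\}$ in $W^{\alpha,G}(\mathbb R^d)$, Lemma \ref{4lem2} and the continuous embeddings $W^{\alpha,G}(\mathbb R^d)\hookrightarrow L^G(\mathbb R^d)$, $W^{\alpha,G}(\mathbb R^d)\hookrightarrow L^{G_*}(\mathbb R^d)$ provide $C_1$ with $\sup_n\int_{\mathbb R^d}(G(|v_n|)+G_*(|v_n|))\,dx\le C_1$ and the same for $v$; in particular $|\{\,|v_n|\ge\delta\,\}|\le C_1/G(\delta)$ for every $\delta>0$, uniformly in $n$.

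The core is a tail cut-off using $(K_2)$ followed by a truncation on a large ball. Fix $\eta>0$ and choose $\varepsilon$ above so small that $2\varepsilon\|K\|_\infty C_1<\eta/2$. Splitting $B_r^c(0)$ along $\{|v_n|<\delta_\varepsilon\}$ and $A_n:=\{|v_n|\ge\delta_\varepsilon\}$ and inserting the growth bounds yields $\int_{B_r^c(0)}K|F(x,v_n)|\,dx\le 2\varepsilon\|K\|_\infty C_1+C_\varepsilon\int_{A_n\cap B_r^c(0)}K\,dx$ with $|A_n|\le C_1/G(\delta_\varepsilon)$; since $(K_2)$ makes $\sup_n\int_{A_n\cap B_r^c(0)}K\,dx\to0$ as $r\to\infty$, there is $R_\eta$ with $\int_{B_{R_\eta}^c(0)}K(|F(x,v_n)|+|F(x,v)|)\,dx<\eta$ for all $n$ (and likewise with $f(x,\cdot)(\cdot)$). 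On $B_{R_\eta}(0)$ I would truncate at height $k\ge M_\varepsilon$: because $|F(x,v_n)-F(x,T_k v_n)|\le\int_k^{|v_n|}|f(x,t)|\,dt\le\varepsilon G_*(|v_n|)$ and, on $\{|v_n|>k\}$, $|f(x,T_k v_n)\,T_k v_n|=|f(x,\pm k)|\,k\le\varepsilon g^+_*G_*(k)\le\varepsilon g^+_*G_*(|v_n|)$ (using $(g_2)$ for $g_*$ and monotonicity of $G_*$), the truncation errors in $\int_{B_{R_\eta}}K|F|$ and $\int_{B_{R_\eta}}K|f(x,\cdot)(\cdot)|$ are $\le C\varepsilon\|K\|_\infty C_1$ uniformly in $n$ (and the same for $v$), while $T_k v_n\to T_k v$ a.e., with $F(x,\cdot)$ and $s\mapsto f(x,s)s$ continuous and (uniformly in $x$) bounded on $[-k,k]$, so dominated convergence (dominant a constant times $\chi_{B_{R_\eta}}$) gives convergence of the truncated integrals over $B_{R_\eta}(0)$. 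Collecting the tail, truncation and bounded-part contributions gives $\limsup_n\int_{\mathbb R^d}K|F(x,v_n)-F(x,v)|\,dx\le C\eta$, and similarly for $\int K f(x,v_n)v_n$; letting $\eta\to0$ closes the argument.

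I expect the delicate point to be exactly this tail cut-off: arranging the decomposition of $\mathbb R^d$ so that $(K_2)$ is applied to the level set $\{|v_n|\ge\delta_\varepsilon\}$ (whose measure is bounded uniformly in $n$ through the $L^G$ bound), while the small-value and high-value pieces are absorbed by the $\varepsilon$-smallness from $(f_1)$--$(f_2)$ together with the uniform $L^G$, $L^{G_*}$ bounds. A secondary technical issue is the availability of the critical continuous embedding $W^{\alpha,G}(\mathbb R^d)\hookrightarrow L^{G_*}(\mathbb R^d)$; if one prefers to rely solely on the subcritical Theorem \ref{4thm2}, the whole argument must be run at a level $B\prec\prec G_*$, which costs a little extra care in handling $(f_2)$.
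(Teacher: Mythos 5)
The lemma is stated in the paper by citation to \cite[Lemma 3.2]{47}, so there is no internal proof here to compare against; I will evaluate your argument on its own. It is a correct and self-contained version of the standard compactness argument. The reduction of all three parts to ``bounded in $W^{\alpha,G}(\mathbb R^d)$ plus a.e.\ convergence'' (via the local compact embedding and a diagonal extraction, then the subsequence-principle) is right, and it is a genuine improvement in clarity that you explicitly avoid the generally false weak convergence $u_n^\pm\rightharpoonup u^\pm$ when handling part $(3)$; the map $w\mapsto w^\pm$ is indeed bounded on $W^{\alpha,G}(\mathbb R^d)$ because $|w^\pm(x)-w^\pm(y)|\le|w(x)-w(y)|$ and $G$ is non-decreasing on $[0,\infty)$, so $\rho(\alpha;w^\pm)\le\rho(\alpha;w)$. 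The core estimate is also correctly structured: the $(f_1)$--$(f_2)$ growth bounds, the split of the tail $B_r^c(0)$ along the level set $A_n=\{|v_n|\ge\delta_\varepsilon\}$ so that $(K_2)$ applies through the uniform measure bound $|A_n|\le C_1/G(\delta_\varepsilon)$ coming from the $L^G$ control, and the truncation plus dominated convergence on the remaining ball. That is precisely how one uses $(K_2)$, and it is the key step.

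Two secondary remarks. First, the finiteness of $C_\varepsilon=\sup_{x,\ \delta_\varepsilon\le|t|\le M_\varepsilon}|F(x,t)|$ is not literally a consequence of $(f_1)$--$(f_2)$ alone (those control $f$ only near $0$ and near $\infty$); one also needs $f$ to be bounded on compact $s$-intervals uniformly in $x$. However, the paper itself takes the resulting inequalities $|f(x,s)|\le\varepsilon g(|s|)+C_\varepsilon g_*(|s|)$ and $|F(x,s)|\le\varepsilon G(|s|)+C_\varepsilon G_*(|s|)$ for granted (see \eqref{4eq81} and \eqref{4eq1}), so you are making exactly the same implicit assumption and there is no mismatch. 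Second, your closing worry about the availability of the critical embedding $W^{\alpha,G}(\mathbb R^d)\hookrightarrow L^{G_*}(\mathbb R^d)$ is unwarranted in this paper's setting: the paper uses $\|u\|_{(G_*)}\le C\|u\|$ without comment (for instance in the proof of Proposition \ref{4prop44}), and under $(g_2)$ and $(g_4)$ this critical embedding is available in the fractional Orlicz-Sobolev literature. So there is no need to rerun the argument at a subcritical level $B\prec\prec G_*$.
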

\begin{lemma}\label{4lem5}\cite[Lemma 4.3]{47}\\
Assume that the hypotheses $(H_f)$, $(H_G)$ and $(H_K)$ hold. Let $w\in \mathcal{M}$ , then 
$$\langle J^{'}(w^\pm),w^\pm\rangle \leq \langle J^{'}(w),w^\pm\rangle.$$
\end{lemma}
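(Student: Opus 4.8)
\textbf{Proof proposal for Lemma \ref{4lem5}.}

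The plan is to establish the inequality $\langle J'(w^\pm),w^\pm\rangle \le \langle J'(w),w^\pm\rangle$ by computing both sides explicitly and reducing the claim to a pointwise comparison of the kernels involved. Recall that $w = w^+ + w^-$ with $w^+$ and $w^-$ having disjoint supports (up to the zero set). The local terms $\int_{\mathbb R^d} g(u)v\,dx$ and $\int_{\mathbb R^d} K(x)f(x,u)v\,dx$ agree on both sides: since $w^+ \ge 0$ and $w^- \le 0$ have essentially disjoint supports, one has $g(w)w^\pm = g(w^\pm)w^\pm$ and, using the structure of $f$ together with $f(x,0)=0$ (and the sign information in Remark \ref{4rem1}), $K(x)f(x,w)w^\pm = K(x)f(x,w^\pm)w^\pm$ pointwise a.e. Hence the only discrepancy between the two sides comes from the nonlocal Gagliardo term, and the lemma reduces to showing
$$
\int_{\mathbb R^d}\!\int_{\mathbb R^d} g\!\left(\frac{w^\pm(x)-w^\pm(y)}{|x-y|^{\alpha}}\right)\frac{w^\pm(x)-w^\pm(y)}{|x-y|^{\alpha+d}}\,dx\,dy
\le
\int_{\mathbb R^d}\!\int_{\mathbb R^d} g\!\left(\frac{w(x)-w(y)}{|x-y|^{\alpha}}\right)\frac{w^\pm(x)-w^\pm(y)}{|x-y|^{\alpha+d}}\,dx\,dy.
$$

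The key step is a pointwise estimate on the double-integrand. Fix $x,y$ and write $a = w^+(x), b = w^+(y), c = -w^-(x)\ge 0, e = -w^-(y)\ge 0$, so that $w(x)-w(y) = (a-b) - (c-e)$ and $w^+(x)-w^+(y) = a-b$. Because of the disjoint supports, at each point at most one of $\{a,c\}$ is nonzero and similarly for $\{b,e\}$, which limits the configurations one must check. Treating the ``$+$'' case: when both $x,y$ lie in $\{w\ge 0\}$ the two integrands coincide; when both lie in $\{w\le 0\}$ the left integrand vanishes while the right integrand equals $g(-(c-e)/|x-y|^\alpha)\cdot 0 = 0$; the substantive case is when $x\in\{w>0\}$, $y\in\{w<0\}$ (or vice versa), where $a-b = a + e \ge 0$ while $w(x)-w(y) = a+e$ too — wait, more carefully $w(y) = -e$, so $w(x)-w(y) = a+e = w^+(x)-w^+(y)$; here the difference of the \emph{arguments} is the same but one must compare $g\big((a)/|x-y|^\alpha\big)$ with $g\big((a+e)/|x-y|^\alpha\big)$ against the common positive factor $(a+e)/|x-y|^{\alpha+d}$, and monotonicity of $g$ (from $(g_1)$, $g$ non-decreasing) gives $g(a/|x-y|^\alpha)\le g((a+e)/|x-y|^\alpha)$ since $e\ge 0$. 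Multiplying by the nonnegative factor preserves the inequality, and integrating yields the claim. The ``$-$'' case is symmetric after changing signs.

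The main obstacle — and the point requiring the most care — is the bookkeeping of the mixed region $\{w>0\}\times\{w<0\}$: one must verify that in every such configuration the sign of $w^\pm(x)-w^\pm(y)$ matches the sign of $w(x)-w(y)$ (so that monotonicity of $g$ points the right way) and that the ``extra'' contribution of the opposite-signed part only increases the magnitude of the argument of $g$ while keeping the factor $w^\pm(x)-w^\pm(y)$ unchanged. Once this pointwise inequality is in hand, the integral inequality and hence the lemma follow immediately, since all the local terms cancelled at the outset. I would also note that this argument is exactly the ``new estimate inspired by \cite{415}'' referred to in the introduction, replacing the naive (false in the nonlocal setting) identity $\langle J'(w),w^\pm\rangle = \langle J'(w^\pm),w^\pm\rangle$.
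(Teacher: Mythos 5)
Your approach is the right one and the argument is sound: reduce to the nonlocal term (the local terms cancel pointwise since $w^\pm$ have disjoint supports and $f(x,0)=0$), then check the pointwise inequality
$$
\Bigl(g\Bigl(\tfrac{w(x)-w(y)}{|x-y|^\alpha}\Bigr)-g\Bigl(\tfrac{w^\pm(x)-w^\pm(y)}{|x-y|^\alpha}\Bigr)\Bigr)\bigl(w^\pm(x)-w^\pm(y)\bigr)\ge 0
$$
case by case on $\mathrm{supp}(w^+)\times\mathrm{supp}(w^-)$ etc., using that $g$ is odd and non-decreasing. The paper itself does not prove this lemma but cites it from \cite[Lemma 4.3]{47}, so there is no in-paper proof to compare against; your proof is, to the best of my knowledge, essentially the standard one and also the one used in that reference.

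A few small slips in your write-up, none fatal. In the mixed case $x\in\{w>0\}$, $y\in\{w<0\}$ you momentarily write $w^+(x)-w^+(y)=a+e$; in fact $w^+(y)=0$ there, so $w^+(x)-w^+(y)=a$, while $w(x)-w(y)=a+e$. Consequently the ``common positive factor'' is $a/|x-y|^{\alpha+d}$, not $(a+e)/|x-y|^{\alpha+d}$. The conclusion is unaffected because the factor is nonnegative either way, and $g(a/|x-y|^\alpha)\le g((a+e)/|x-y|^\alpha)$ by monotonicity and $e\ge 0$; but you should clean this up so the argument reads unambiguously. You should also explicitly run the ``vice versa'' sub-case ($x\in\{w<0\}$, $y\in\{w>0\}$), where the factor $w^+(x)-w^+(y)$ is nonpositive and the argument of $g$ is made more negative by the $w^-$-contribution, so both factors flip sign together. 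Finally, note that your argument never actually uses $w\in\mathcal{M}$; the inequality holds for every $w\in W^{\alpha,G}(\mathbb{R}^d)$, which is worth remarking since the lemma statement carries that hypothesis only because that is the setting in which it is applied.
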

\begin{lemma}\label{4lem9}
Assume that the hypotheses $(H_f)$, $(H_G)$ and $(H_K)$ hold. Then, for all $w\in W^{\alpha,G}(\mathbb{R}^d)$  such that $w^\pm\neq 0$, we have 
$$J(w^+)+J(w^-)< J(w).$$
\end{lemma}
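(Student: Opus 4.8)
The plan is to exploit the strict super-additivity of the double-integral (Gagliardo) part of $J$, using Lemma \ref{lem444}, and to handle the remaining (local) terms by exact additivity. Write $w = w^+ + w^-$ with $w^\pm \neq 0$, so that $w(x) - w(y) = (w^+(x) - w^+(y)) + (w^-(x) - w^-(y))$ for a.e.\ $(x,y)$. The key pointwise observation is that $w^+(x) - w^+(y)$ and $w^-(x) - w^-(y)$ \emph{never have strictly opposite signs cancelling destructively}: more precisely, for the convex even N-function $G$ one has
$$G\!\left(\frac{w(x)-w(y)}{|x-y|^\alpha}\right) \;\geq\; G\!\left(\frac{w^+(x)-w^+(y)}{|x-y|^\alpha}\right) + G\!\left(\frac{w^-(x)-w^-(y)}{|x-y|^\alpha}\right)$$
for a.e.\ $(x,y)$, with strict inequality on a set of positive measure. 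I would prove the pointwise inequality by distinguishing cases according to the signs of $w(x)$ and $w(y)$: if both are $\geq 0$ then $w^- \equiv 0$ at these points and the inequality is trivial (and likewise if both $\leq 0$); if $w(x) > 0 > w(y)$, then $w^+(x) - w^+(y) = w^+(x) = w(x) \geq 0$, $w^-(x) - w^-(y) = -w^-(y) = -w(y) \geq 0$, and $w(x) - w(y) = (w^+(x) - w^+(y)) + (w^-(x) - w^-(y))$ is a sum of two nonnegative numbers, so Lemma \ref{lem444} (super-additivity of $G$ on $[0,\infty)$, applied after dividing by $|x-y|^\alpha$ and using positive homogeneity of the argument — or directly, since $a \mapsto G(a/|x-y|^\alpha)$ is again a function to which Lemma \ref{lem444} applies) gives exactly the claim; the symmetric case $w(y) > 0 > w(x)$ is identical using that $G$ is even.

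Integrating this pointwise inequality against $|x-y|^{-d}\,dx\,dy$ over $\mathbb{R}^d \times \mathbb{R}^d$ yields
$$\overline{\rho}(\alpha; w) \;\geq\; \overline{\rho}(\alpha; w^+) + \overline{\rho}(\alpha; w^-).$$
For the local terms, since $w^+$ and $w^-$ have disjoint supports (up to the null set $\{w = 0\}$), $G(w(x)) = G(w^+(x)) + G(w^-(x))$ for a.e.\ $x$, hence $\int_{\mathbb{R}^d} G(w)\,dx = \int_{\mathbb{R}^d} G(w^+)\,dx + \int_{\mathbb{R}^d} G(w^-)\,dx$; similarly $F(x,w) = F(x,w^+) + F(x,w^-)$ a.e., so $\int_{\mathbb{R}^d} K(x)F(x,w)\,dx = \int_{\mathbb{R}^d} K(x)F(x,w^+)\,dx + \int_{\mathbb{R}^d} K(x)F(x,w^-)\,dx$. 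Combining the three relations gives $J(w) \geq J(w^+) + J(w^-)$.

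It remains to upgrade this to a \emph{strict} inequality. This is the main obstacle: one must rule out that the pointwise inequality is an equality a.e. Since $w^+ \neq 0$ and $w^- \neq 0$, there exist a point $x_0$ where (in a Lebesgue-density sense) $w > 0$ and a point $y_0$ where $w < 0$; by continuity/measure-theoretic arguments there are sets $E_+, E_-$ of positive measure with $w|_{E_+} > 0$ and $w|_{E_-} < 0$, and on $E_+ \times E_-$ we are in the mixed-sign case. On that product set, strict inequality in Lemma \ref{lem444} holds provided both $w^+(x) - w^+(y) = w(x) > 0$ and $w^-(x) - w^-(y) = -w(y) > 0$ are strictly positive (strict super-additivity of the strictly convex $G$ with $G(0)=0$: $G(a+b) > G(a) + G(b)$ whenever $a, b > 0$, which follows from $(g_2)$ forcing $G$ to be strictly increasing and strictly convex on $(0,\infty)$), which is exactly the situation on $E_+ \times E_-$, a set of positive $|x-y|^{-d}\,dx\,dy$-measure. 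Therefore $\overline{\rho}(\alpha;w) > \overline{\rho}(\alpha;w^+) + \overline{\rho}(\alpha;w^-)$ strictly, and hence $J(w) > J(w^+) + J(w^-)$, as claimed. I expect the only delicate point to be the clean measure-theoretic justification that the mixed-sign region $E_+ \times E_-$ has positive measure, which is immediate once one notes $\{w > 0\}$ and $\{w < 0\}$ both have positive Lebesgue measure because $w^\pm \neq 0$ in $W^{\alpha,G}(\mathbb{R}^d) \subset L^G(\mathbb{R}^d)$.
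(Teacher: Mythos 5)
Your proposal is correct and is genuinely different in execution from the paper's. The paper argues by contradiction: it assumes $J(w)\le J(w^+)+J(w^-)$, writes a decomposition of $J(w)$ into $J(w^+)+J(w^-)$ plus two nonnegative ``cross'' Gagliardo integrals over $\mathrm{supp}(w^+)\times\mathrm{supp}(w^-)$, deduces that these integrals vanish, and concludes $w^\pm=0$ a.e.\ from the positivity of $G$. Your route establishes the pointwise bound
$$G\!\left(\tfrac{w(x)-w(y)}{|x-y|^\alpha}\right)\ \ge\ G\!\left(\tfrac{w^+(x)-w^+(y)}{|x-y|^\alpha}\right)+G\!\left(\tfrac{w^-(x)-w^-(y)}{|x-y|^\alpha}\right)$$
a.e., via Lemma \ref{lem444}, integrates it, observes exact additivity of the local terms, and then upgrades to strict inequality on $E_+\times E_-$. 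This is the cleaner bookkeeping: when one expands $\overline\rho(\alpha;w)-\overline\rho(\alpha;w^+)-\overline\rho(\alpha;w^-)$ carefully, the cross-region contributions come out as the nonnegative differences $G(a+b)-G(a)-G(b)$ (with $a=w^+(x)/|x-y|^\alpha$, $b=-w^-(y)/|x-y|^\alpha$), not as $G(a+b)$ alone, because $\overline\rho(\alpha;w^\pm)$ itself produces the subtracted pieces $G(a)$, $G(b)$ over $\mathrm{supp}(w^+)\times\mathrm{supp}(w^-)$. Your pointwise formulation encodes exactly this gap, so the proof is self-contained and does not rely on the exact identity invoked in the paper, which appears to elide those subtracted terms; both approaches hinge on super-additivity of $G$, but yours tracks it more faithfully.

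One small refinement: strict super-additivity of $G$ on $(0,\infty)$ does not actually require appealing to strict convexity (which is not literally asserted by $(g_2)$). It already follows from $(g_1)$ and the monotonicity of $g$: for $a,b>0$,
$$G(a+b)-G(a)-G(b)=\int_0^b\bigl[g(a+u)-g(u)\bigr]\,du,$$
and the integrand is nonnegative everywhere and strictly positive for $u$ near $0$ because $g$ is continuous, non-decreasing, $g(0)=0$ and $g(a)>0$. This replaces your parenthetical justification, and the rest of your argument (positivity of $|E_+\times E_-|$ from $w^\pm\neq 0$ in $L^G$, and finiteness of all integrals since $w\in W^{\alpha,G}$) is sound.
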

\begin{proof}
Let $w\in W^{\alpha,G}(\mathbb{R}^d)$  such that $w^\pm\neq 0$. We argue by contradiction, suppose that 
\begin{equation}\label{4eq402}
J(w)\leq J(w^+)+J(w^-).
\end{equation}
Firstly, we observe that 
\begin{align*}
J(w) &=J(w^+)+J(w^-)\\
& +\int_{\text{supp}(w^-)}\int_{\text{supp}(w^+)}G\left( \frac{w^+(x)-w^-(y)}{\vert x-y\vert^{\alpha}}\right)\frac{dxdy}{\vert x-y\vert^{d}} \\
&+ \int_{\text{supp}(w^+)}\int_{\text{supp}(w^-)}G\left( \frac{w^-(x)-w^+(y)}{\vert x-y\vert^{\alpha}}\right)\frac{dxdy}{\vert x-y\vert^{d}}. 
\end{align*}
By \eqref{4eq402} and the fact that $G(\cdot)$ is an  even positive function, then
\begin{equation}\label{4eq403}
\left\lbrace 
\begin{array}{ll}
\displaystyle{\int_{\text{supp}(w^-)}\int_{\text{supp}(w^+)}G\left( \frac{w^+(x)-w^-(y)}{\vert x-y\vert^{\alpha}}\right)\frac{dxdy}{\vert x-y\vert^{d}}= 0,} \\
\ \\
\displaystyle{\int_{\text{supp}(w^+)}\int_{\text{supp}(w^-)}G\left( \frac{w^-(x)-w^+(y)}{\vert x-y\vert^{\alpha}}\right)\frac{dxdy}{\vert x-y\vert^{d}}= 0.}
\end{array}
\right.
\end{equation}
Thus, one has 
$$w^+(x)=w^-(y)\ \text{and}\ w^-(x)=w^+(y),\ \text{for a.a.}\ \ x,y\in\mathbb{R}^d.$$
Therefore, $w^\pm=0$ in $\mathbb{R}^d$ ( since supp$(w^+)\displaystyle{\cap}$supp$(w^-)=\emptyset$ ). Which is a contradiction.\\
This ends the proof.
\end{proof}
\section{Ground state solutions}
In this section, we prove the existence of a ground state solution for problem \eqref{P}.

\begin{prop}\label{4prop2}
Assume that the hypotheses $(H_f)$, $(H_G)$ and $(H_K)$ hold, then for all $u\in W^{\alpha,G}(\mathbb{R}^d)\backslash \lbrace 0\rbrace$, there exists a unique $t_{u}>0$ such that $t_{u}u\in \mathcal{N}$.
\end{prop}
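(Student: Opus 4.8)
Fix $u\in W^{\alpha,G}(\mathbb{R}^d)\setminus\{0\}$ and work with the fibering map in the $u$-direction. For $t>0$ set $\eta_u(t):=\langle J'(tu),tu\rangle$; using that $g$ is odd,
$$\eta_u(t)=A_u(t)-B_u(t),\qquad B_u(t):=\int_{\mathbb{R}^d}K(x)\,f(x,tu)\,tu\,dx,$$
where $A_u(t):=\iint_{\mathbb{R}^d\times\mathbb{R}^d}g\!\big(\tfrac{t(u(x)-u(y))}{|x-y|^{\alpha}}\big)\tfrac{t(u(x)-u(y))}{|x-y|^{\alpha}}\tfrac{dx\,dy}{|x-y|^{d}}+\int_{\mathbb{R}^d}g(tu)\,tu\,dx$. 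By Lemma \ref{4lem1}, Lemma \ref{4lem2} and Theorem \ref{4thm2} (together with $(f_1)$–$(f_2)$) both $A_u$ and $B_u$ are finite and continuous on $(0,\infty)$. Since $t_uu\in\mathcal N\iff\eta_u(t_u)=0\iff\xi_u(t_u)=0$, where $\xi_u(t):=t^{-g^+}\eta_u(t)$, the proposition is equivalent to: \emph{$\xi_u$ has exactly one zero on $(0,\infty)$}. I would prove this by showing $\xi_u$ is strictly decreasing with $\xi_u(0^+)>0$ and $\xi_u(t)\to-\infty$ as $t\to\infty$.

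\emph{Monotonicity of $\xi_u$.} Each term of $A_u$ can be written, for the relevant measure $\mu$, as $\int t^{-g^+}g(t|w|)\,t|w|\,d\mu=\int|w|^{g^+}\,\dfrac{g(t|w|)}{(t|w|)^{g^+-1}}\,d\mu$; by $(g_3)$ the map $r\mapsto g(r)/r^{g^+-1}$ is non-increasing, so $t\mapsto t^{-g^+}A_u(t)$ is non-increasing. Similarly, on $\{u\neq0\}$, $t^{-g^+}f(x,tu)\,tu=|u|^{g^+}\,\dfrac{|f(x,tu)|}{|tu|^{g^+-1}}$, and by \eqref{4eq50} of Remark \ref{4rem1} (applied on $(0,\infty)$ and on $(-\infty,0)$ separately) $t\mapsto t^{-g^+}B_u(t)$ is non-decreasing — in fact strictly increasing, since the strict inequality in $(f_4)$ makes \eqref{4eq50} strict and $u\neq0$ on a set of positive measure. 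Hence $\xi_u$ is strictly decreasing on $(0,\infty)$.

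\emph{Behaviour at the endpoints.} As $t\downarrow0$, $t^{-g^+}A_u(t)$ increases to some $A_u(0^+)\in(0,+\infty]$; the positivity follows from $g(r)/r^{g^+-1}\ge c_0>0$ near $0$, a consequence of $(g_2)$ and Lemma \ref{4lem1}(1). For $B_u$: by $(f_1)$ and $G(r)\asymp g(r)r$ (from $(g_2)$) one has $|f(x,s)s|\le\varepsilon\,g(|s|)|s|$ for $|s|\le\delta_\varepsilon$, while $(f_2)$ and Theorem \ref{4thm2} control the contribution of $\{|tu|>\delta_\varepsilon\}$, whose measure tends to $0$; this gives $t^{-g^+}B_u(t)\le\varepsilon\,g^+\|K\|_\infty\,t^{-g^+}A_u(t)+R_\varepsilon(t)$ with $R_\varepsilon(t)\to0$ as $t\downarrow0$, whence $\xi_u(0^+)\ge(1-\varepsilon g^+\|K\|_\infty)A_u(0^+)>0$ for $\varepsilon$ small. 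As $t\to\infty$, for $t$ large Lemma \ref{4lem2}(4) and $(g_2)$ give $t^{-g^+}A_u(t)\le g^+\,t^{-g^+}\rho(\alpha;tu)\le g^+\|u\|^{g^+}$ (bounded), while \eqref{4eq51} together with $f(x,0)=0$ yields $f(x,s)s\ge g^+F(x,s)\ge0$, so $t^{-g^+}B_u(t)\ge g^+\!\int_{\mathbb{R}^d}\!K(x)\,\dfrac{F(x,tu)}{|tu|^{g^+}}\,|u|^{g^+}\,dx\to+\infty$ by $(f_3)$, $F\ge0$ and Fatou's lemma. Therefore $\xi_u(t)\to-\infty$.

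Being continuous, strictly decreasing, positive near $0$ and divergent to $-\infty$, $\xi_u$ has a unique zero $t_u\in(0,\infty)$, which yields $t_uu\in\mathcal N$. \emph{The main obstacle} is the analysis near $t=0$: by Lemma \ref{4lem1} both $A_u(t)$ and $B_u(t)$ a priori scale anywhere between $t^{g^-}$ and $t^{g^+}$, so the reaction term need not be of strictly higher order and one cannot conclude positivity of $\eta_u$ near $0$ by a crude comparison. The remedy is exactly the normalisation by $t^{g^+}$ combined with the pointwise monotonicities coming from $(g_3)$ and \eqref{4eq50} — the same monotonicities that deliver uniqueness — and the smallness available in $(f_1)$. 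The remaining ingredients (finiteness and continuity of $A_u,B_u$, and the Fatou step) are routine from the preliminaries of Sections 2 and 4.
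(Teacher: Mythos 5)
Your proof is correct, and it packages the argument differently from the paper. The paper treats existence and uniqueness as two separate steps: existence by estimating $h_u(t)=\langle J'(tu),tu\rangle$ near $0$ (using the pointwise bound \eqref{4eq81} with the exponent gap $g^+<g^-_*$) and near $\infty$ (via $(f_3)$ and Fatou), then invoking Bolzano; uniqueness by a dedicated contradiction argument that subtracts the two Nehari identities and feeds the difference into the strict monotonicity of $s\mapsto f(x,s)/|s|^{g^+-1}$ from \eqref{4eq50}. You instead normalize the fibering map to $\xi_u(t)=t^{-g^+}h_u(t)$ and show it is strictly decreasing — the $g$-part is non-increasing because $(g_3)$ forces $r\mapsto g(r)/r^{g^+-1}$ to be non-increasing, and the reaction part is strictly increasing by \eqref{4eq50} — so existence and uniqueness fall out of a single monotonicity statement plus the two endpoint limits. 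This is a genuine, and arguably cleaner, reorganization: the monotonicity that the paper invokes only for uniqueness is promoted to the organizing principle. The endpoint analysis is the same in substance (the $t\to\infty$ divergence is identical; the $t\to 0^+$ positivity uses $(f_1)$--$(f_2)$ in both cases). One small correction to your closing remark: the ``crude comparison'' near $t=0$ does in fact work and is exactly what the paper does in \eqref{4eq7546} — the reaction term $B_u$ is not merely sandwiched between $t^{g^-}$ and $t^{g^+}$, because the decomposition \eqref{4eq81} splits it into an $\varepsilon g(tu)tu$ piece (controlled by choosing $\varepsilon\|K\|_\infty<1$) and a $g_*(tu)tu$ piece that scales as $t^{g^-_*}$, strictly higher order than $t^{g^+}$. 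So the normalization is not forced by an obstacle at $t=0$; its real payoff is that it unifies existence with uniqueness.
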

\begin{proof}
Let $u\in W^{\alpha,G}(\mathbb{R}^d)\backslash \lbrace 0\rbrace$, consider the following fibering map $h_u:(0,+\infty)\longrightarrow \mathbb{R}$ defined by 
$$h_u(t)=\langle J^{'}(tu),tu\rangle\ \ \text{for all}\ t>0.$$
From hypotheses $(f_1)$ and $(f_2)$, for all $\varepsilon>0$ there exists $C_\varepsilon>0$ such that
\begin{equation}\label{4eq81}
f(x,s)s\leq \varepsilon g(s)s+C_\varepsilon g_*(s)s,\ \ \text{for a.a.}\ x\in \mathbb{R}^d \ \text{and all}\ s\in \mathbb{R}.
\end{equation}
Using \eqref{4eq81}, Lemma \ref{4lem1}, and assumptions $(g_2)$ and $(K_1)$, we get 

\begin{align}\label{4eq7546}
h_{u}(t) & \geq g^-t^{g^+}\int_{\mathbb{R}^{d}}\int_{\mathbb{R}^{d}}G\left( \frac{u(x)-u(y)}{\vert x-y\vert^{\alpha}}\right)\frac{dxdy}{\vert x-y\vert^{d}} + \int_{\mathbb{R}^{d}}g(tu)tudx \nonumber\\ 
& -\varepsilon\Vert K\Vert_{\infty}\int_{\mathbb{R}^{d}}g(tu)tu dx-C_\varepsilon\Vert K\Vert_{\infty}\int_{\mathbb{R}^{d}}g_*(tu)tu dx\nonumber\\
& \geq g^-t^{g^+}\int_{\mathbb{R}^{d}}\int_{\mathbb{R}^{d}}G\left( \frac{u(x)-u(y)}{\vert x-y\vert^{\alpha}}\right)\frac{dxdy}{\vert x-y\vert^{d}} + \left( 1-\varepsilon\Vert K\Vert_{\infty}\right)g^- t^{g^+} \int_{\mathbb{R}^{d}}G(u)dx\nonumber \\
& -g_*^+ t^{g^-_*}\int_{\mathbb{R}^{d}}G_*(u) dx\nonumber\\
& =g^-t^{g^+}\left[ \int_{\mathbb{R}^{d}}\int_{\mathbb{R}^{d}}G\left( \frac{u(x)-u(y)}{\vert x-y\vert^{\alpha}}\right)\frac{dxdy}{\vert x-y\vert^{d}} + \left( 1-\varepsilon\Vert K\Vert_{\infty}\right)\int_{\mathbb{R}^{d}}G(u)dx\right]\nonumber \\
& -g_*^+ t^{g^-_*}\int_{\mathbb{R}^{d}}G_*(u) dx.
\end{align}
Choosing $\displaystyle{\varepsilon \in (0,\frac{1}{\Vert K\Vert_{\infty}})}$ in \eqref{4eq7546} and using the fact that $g^+< g^-_*,$ we find  $t_0>0$ small enough such that 
$$h_{u}(t)> 0,\ \forall\ t\in (0,t_0).$$
Let $A\subset \text{supp}(u)$ such that the Lebesgue  measure of $A$ is positive, $mes (A)>0$.\\
 Using \eqref{4eq51}, \eqref{4eqhlel}, Lemmas \ref{4lem1}, \ref{4lem2}, assumptions  $(g_2)$, and $(K_1)$ , for $t$ large, we find
\begin{align}\label{4eq4}
\frac{h_{u}(t)}{t^{g^+}} & \leq g^+\max\left\lbrace \Vert u\Vert^{g^-},\Vert u\Vert^{g^+}\right\rbrace - \int_{\mathbb{R}^d}K(x)\frac{f(x,tu)tu}{ t^{g^+}}dx\nonumber\\
& \leq g^+\left[ \max\left\lbrace \Vert u\Vert^{g^-},\Vert u\Vert^{g^+}\right\rbrace - \int_{\mathbb{R}^d}K(x)\frac{F(x,tu)}{t^{g^+}}dx\right]\nonumber\\
&\leq g^+\left[ \max\left\lbrace \Vert u\Vert^{g^-},\Vert u\Vert^{g^+}\right\rbrace - \int_{A}K(x)\frac{F(x,tu)}{ \vert tu\vert^{g^+}}\vert u\vert^{g^+}dx\right] 
\end{align}
In light of hypothesis $(f_3)$, we see that 
\begin{equation}\label{4eq5}
\lim\limits_{t\rightarrow +\infty}\frac{F(x,tu)}{\vert tu\vert^{g^+}}\vert u\vert^{g^+}=+\infty,\ \text{uniformly for all}\ x\in A.
\end{equation}
By assumption $(K_1)$ and Fatou's lemma, it follows that
\begin{equation}\label{4eq3}
\int_{A}K(x)\frac{F(x,tu)}{ \vert tu\vert^{g^+}}\vert u\vert^{g^+}dx \rightarrow +\infty\ \text{as}\ t\rightarrow +\infty.
\end{equation}
From \eqref{4eq4} and \eqref{4eq3}, we get
$$\limsup\limits_{t\rightarrow +\infty}\frac{h_u(t)}{t^{g^+}}\leq -\infty.$$
Therefore, there is $t_1>0$ large enough such that 
$$h_u(t)<0,\ \text{for all}\ t\in (t_1,+\infty).$$
According to Bolzano's theorem, there exists $t_u>0$ such that $h_u(t_u)=0$. Hence, $\langle J^{'}(t_{u}u),t_{u}u\rangle =0$, that is, $t_{u}u\in \mathcal{N}.$\\

In what follows, we prove the uniqueness of $t_u>0$. Let $t_1,t_2$ be the two different positive number such that  $t_{i}u\in \mathcal{N},\ i=1,2$.\\ 

Firstly, we consider the case $u\in \mathcal{N}$. Without loss of generality, we may take $t_1=1$ and $t_1\neq t_2$. Thus,
\begin{equation}\label{4eq100}
\langle J^{'}(u),u\rangle =0
\end{equation}
and
\begin{equation}\label{4eq101}
\langle J^{'}(t_2u),u\rangle =0.
\end{equation}
From \eqref{4eq100}, it yields that
\begin{align}\label{4eq102}
\int_{\mathbb{R}^{d}}K(x)\frac{f(x,u)}{\vert u\vert ^{g^+-1}}\vert u\vert ^{g^+-1} udx&=\int_{\mathbb{R}^{d}}\int_{\mathbb{R}^{d}}g\left( \frac{u(x)-u(y)}{\vert x-y\vert^{\alpha}}\right)\frac{u(x)-u(y)}{\vert x-y\vert^{\alpha +d}}dxdy\nonumber\\ &+\int_{\mathbb{R}^{d}}g(u)udx.
\end{align}
If $t_2<t_1=1$, then, by \eqref{4eq101} and Lemma \ref{4lem1}, we obtain
\begin{align}\label{4eq103}
\int_{\mathbb{R}^{d}}K(x)\frac{f(x,t_2u)}{\vert t_2 u\vert ^{g^+-1}}\vert u\vert ^{g^+-1} u dx & \geq\int_{\mathbb{R}^{d}}\int_{\mathbb{R}^{d}}g\left( \frac{u(x)-u(y)}{\vert x-y\vert^{\alpha}}\right)\frac{u(x)-u(y)}{\vert x-y\vert^{\alpha +d}}dxdy\nonumber\\
 &+\int_{\mathbb{R}^{d}}g(u)udx.
\end{align}
Subtracting \eqref{4eq103} from \eqref{4eq102}, using  \eqref{4eq50} and hypothesis $(K_1)$, we infer that
\begin{align}\label{4eq106}
 0< \int_{\mathbb{R}^{d}}K(x)\vert u\vert^{g^+-1}u\left[ \frac{f(x,u)}{\vert u\vert^{g^+-1}}- \frac{f(x,t_2u)}{\vert t_2 u\vert^{g^+-1}}\right] \ dx\leq 0.
\end{align}
Which is a contradiction.\\

If $t_2>t_1=1$, then, by \eqref{4eq101} and Lemma \ref{4lem1}, we get
\begin{align}\label{4eq104}
\int_{\mathbb{R}^{d}}K(x)\frac{f(x,t_2u)}{\vert t_2 u\vert ^{g^+-1}}\vert u\vert ^{g^+-1} udx& \leq\int_{\mathbb{R}^{d}}\int_{\mathbb{R}^{d}}g\left( \frac{u(x)-u(y)}{\vert x-y\vert^{\alpha}}\right)\frac{u(x)-u(y)}{\vert x-y\vert^{\alpha +d}}dxdy\nonumber\\
 &+\int_{\mathbb{R}^{d}}g(u)udx.
\end{align}
Subtracting \eqref{4eq104} from \eqref{4eq100}, using  \eqref{4eq50} and hypothesis $(K_1)$, we deduce that
\begin{align}\label{4eq107}
 0\leq \int_{\mathbb{R}^{d}}K(x)\vert u\vert^{g^+-1}u\left[ \frac{f(x,u)}{\vert u\vert^{g^+-1}}- \frac{f(x,t_2u)}{\vert t_2 u\vert^{g^+-1}}\right] \ dx<0.
\end{align}
Which is a contradiction too. Therefore $t_1=t_2=1$.\\

Secondly, for the case $u\notin \mathcal{N}$. Let $v=t_1u\in \mathcal{N}$, $t_1\neq 1$ and $\displaystyle{t_2u=\frac{t_2}{t_1} t_1u=\frac{t_2}{t_1}v\in \mathcal{N}}$.
Applying the same arguments  as above, we prove that $\displaystyle{\frac{t_2}{t_1}=1}$.\\
This ends the proof.
\end{proof}
\begin{prop}\label{4prop3}
Assume that the hypotheses $(H_f)$, $(H_G)$ and $(H_K)$ hold. Then for all $u\in\mathcal{N}$,  
$$J(tu)\leq J(u),\  \text{for all}\  t>0.$$
\end{prop}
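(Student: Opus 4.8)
The plan is to reduce the statement to a one-variable claim about the fibering map. For $u\in\mathcal N$ fixed, set $\varphi(t)=J(tu)$ for $t>0$. By the chain rule $\varphi'(t)=\langle J'(tu),u\rangle=\frac{1}{t}h_u(t)$, where $h_u(t)=\langle J'(tu),tu\rangle$ is exactly the map studied in Proposition \ref{4prop2}. Since $u\in\mathcal N$ we have $h_u(1)=0$, and Proposition \ref{4prop2} tells us that $t_u=1$ is the \emph{unique} positive zero of $h_u$. So it suffices to show that $\varphi$ attains its global maximum on $(0,+\infty)$ precisely at $t=1$; equivalently, that $h_u(t)>0$ for $t\in(0,1)$ and $h_u(t)<0$ for $t\in(1,+\infty)$.

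First I would record the sign behaviour of $h_u$ near the endpoints, which is already essentially contained in the proof of Proposition \ref{4prop2}: the estimate \eqref{4eq7546} together with $g^+<g^-_*$ gives $h_u(t)>0$ for all small $t>0$, and the estimates \eqref{4eq4}--\eqref{4eq3} (using $(f_3)$) give $h_u(t)<0$ for all large $t$. Combining this with the uniqueness of the zero $t_u=1$ forces $h_u>0$ on $(0,1)$ and $h_u<0$ on $(1,\infty)$: indeed, if $h_u$ were negative somewhere in $(0,1)$ it would have to vanish again between that point and $0^+$ by the intermediate value theorem (contradicting uniqueness), and symmetrically on $(1,\infty)$. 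Hence $\varphi'(t)=h_u(t)/t$ is positive on $(0,1)$ and negative on $(1,\infty)$, so $\varphi$ is strictly increasing then strictly decreasing, and therefore $\varphi(t)\le\varphi(1)$ for every $t>0$, i.e. $J(tu)\le J(u)$.

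The main obstacle is making the passage from ``positive near $0$, negative near $\infty$, unique zero'' to ``positive on $(0,1)$, negative on $(1,\infty)$'' fully rigorous when $h_u$ is only continuous (not smooth), since $f(x,\cdot)$ is merely locally Lipschitz. Continuity of $t\mapsto h_u(t)$ is enough for the intermediate value argument, and continuity follows from the continuity of $g$, the structure of $J'$, and the dominated-convergence / Krasnoselskii-type bounds coming from $(f_1)$--$(f_2)$ and Lemma \ref{4lem1}; I would state this continuity explicitly as the one non-formal input. An alternative, should one wish to avoid invoking Proposition \ref{4prop2}'s uniqueness as a black box, is to argue directly from monotonicity: write $h_u(t)/t^{g^+}$ as a difference of a term that is nonincreasing in $t$ (coming from $G,g$ via Lemma \ref{4lem3} applied to the quotients, after using $(g_2)$) and a term $\int K(x)\frac{f(x,tu)tu}{t^{g^+}}\,dx$ that is nondecreasing in $t$ by \eqref{4eq50}; strict monotonicity of the reaction part on $\operatorname{supp}(u)$ then yields that $h_u(t)/t^{g^+}$ is strictly decreasing, which gives the sign pattern and the uniqueness simultaneously. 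Either route closes the proof; I expect the first (shorter) one to be what the authors intend.
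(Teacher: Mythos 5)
Your proof is correct and follows essentially the same route as the paper: both rely on the endpoint asymptotics of the fibering map (positive near $0$, tending to $-\infty$) established in the proof of Proposition \ref{4prop2}, together with the uniqueness of the positive $t$ with $tu\in\mathcal N$ from that same proposition, to conclude that the global maximum of $t\mapsto J(tu)$ sits at $t=1$. The paper wraps this up slightly differently — it notes directly that a global maximizer $t_u$ of $k_u$ is a critical point, hence $t_u u\in\mathcal N$, hence $t_u=1$ by uniqueness — whereas you pass through the sign of $\varphi'=h_u(\cdot)/\cdot$ via the intermediate value theorem; the two are equivalent. One small remark: the ``obstacle'' you flag about $h_u$ being merely continuous is not actually an issue here, since the paper records $J\in C^1(W^{\alpha,G}(\mathbb{R}^d))$ (the local Lipschitz hypothesis on $f$ does not obstruct $C^1$ smoothness of $J$, which only involves the primitive $F$), so $\varphi$ is genuinely $C^1$ and $h_u$ is continuous automatically.
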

\begin{proof}
Let $u\in\mathcal{N}$ and consider the fibering map $k_u:(0,+\infty)\longrightarrow \mathbb{R}$ defined by 
$$k_u(t)=J(tu)\ \ \text{for all}\ t>0.$$
By the hypotheses $(f_1)$ and $(f_2)$, for all $\varepsilon>0$ there exists $C_\varepsilon >0$ such that 
\begin{equation}\label{4eq1}
F(x,s)\leq \varepsilon G(s)+C_\varepsilon G_*(s),\ \text{for a.a.}\ x\in\mathbb{R}^d\ \text{ and all}\ \vert s\vert >0.
\end{equation}
Using \eqref{4eq1}, Theorem \ref{4thm2}, Lemmas \ref{4lem1}, \ref{4lem2} and hypothesis $(K_1)$, then
\begin{align*}
k_u(t)&=\int_{\mathbb{R}^{d}}\int_{\mathbb{R}^{d}}G\left( \frac{tu(x)-tu(y)}{\vert x-y\vert^{\alpha}}\right)\frac{dxdy}{\vert x-y\vert^{d}} + \int_{\mathbb{R}^{d}}G(tu)\ dx-\int_{\mathbb{R}^{d}}K(x)F(x,tu)\ dx\\
&\geq t^{g^+}\int_{\mathbb{R}^{d}}\int_{\mathbb{R}^{d}}G\left( \frac{u(x)-u(y)}{\vert x-y\vert^{\alpha}}\right)\frac{dxdy}{\vert x-y\vert^{d}} + \left( 1-\varepsilon\Vert K\Vert_{\infty}\right) \int_{\mathbb{R}^{d}}G(tu)\ dx\\
&-C_\varepsilon\Vert K\Vert_{\infty}\int_{\mathbb{R}^{d}}G_*(tu)\ dx\\
& \geq t^{g^+}\left( 1-\varepsilon\Vert K\Vert_{\infty}\right)\rho (\alpha; u)-t^{g^-_*}C_\varepsilon \Vert K\Vert_{\infty}\int_{\mathbb{R}^{d}}G_*(u)\ dx.
 \end{align*} 
Taking $\displaystyle{\varepsilon \in (0,\frac{1}{\Vert K\Vert_{\infty}})}$, having in mind  that $g^+<g^-_*$, there is  $t_0 > 0$ sufficiently small such that
\begin{equation}\label{4eq2}
0<k_u(t),\ \text{for all}\ t\in (0,t_0).
\end{equation}
Let $A\subset \text{supp}(u)$ such that the Lebesgue  measure of $A$ is positive, $mes(A)>0$.\\
It's clear that, for $t$ large enough,
\begin{align}\label{4eq7}
\frac{k_{u}(t)}{t^{g^+}}\leq g^+\max\left\lbrace \Vert u\Vert^{g^-},\Vert u\Vert^{g^+}\right\rbrace - \int_{A}K(x)\frac{F(x,tu)}{ \vert tu\vert^{g^+}}\vert u\vert^{g^+}dx.
\end{align}
Under \eqref{4eq3},
 $$\limsup\limits_{t\rightarrow +\infty} k_u(t)\leq -\infty.$$
 Therefore, the map $k_u(.)$ has  a global maximum $t_u>0$. So, $t_u$ is a critical point for $k_u(.)$ :
 
$$\langle J^{'}(t_u u),u\rangle=0, t_u u\in \mathcal{N}.$$
 By Proposition \ref{4prop2} and the fact that $u\in \mathcal{N}$, we deduce that $t_u=1$.\\ Hence, 
$$J(tu)\leq J(u),\ \ \text{for all}\ t>0.$$
This completes the proof.
\end{proof}
In order to prove the existence of a ground state solution for problem \eqref{P}, we consider the following minimization problem 
$$ m_0:=\inf\limits_{\mathcal{N}}J.$$
 
\begin{prop}\label{4prop44}
Suppose that the hypotheses $(H_f)$, $(H_G)$ and $(H_K)$ are satisfied, then $0<m_0$.
\end{prop}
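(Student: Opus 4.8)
The goal is to show the Nehari energy level $m_0 = \inf_{\mathcal N} J$ is strictly positive. The natural strategy is to produce a uniform lower bound $\|u\| \geq \delta > 0$ for every $u \in \mathcal N$, and then to show that $J(u)$ is bounded below by a positive constant on the sphere $\|u\| = \delta$ (or more directly, on all of $\mathcal N$). The first part — the uniform distance of $\mathcal N$ from $0$ — is where the work lies; once it is in hand, the energy bound follows from combining the Nehari identity $\langle J'(u),u\rangle = 0$ with Lemma~\ref{4lem3} (the function $s \mapsto G(s) - \frac{1}{g^+}g(s)s$ is monotone, hence $\geq 0$), exactly as in the standard Nehari-manifold argument.

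First I would fix $u \in \mathcal N$ and use the growth estimate \eqref{4eq81}, $f(x,s)s \leq \varepsilon g(s)s + C_\varepsilon g_*(s)s$, together with $(K_1)$, inside the identity $\langle J'(u),u\rangle = 0$. This gives
\[
\overline\rho(\alpha;u) + \int_{\mathbb R^d} g(u)u\,dx \;\leq\; \varepsilon\|K\|_\infty \int_{\mathbb R^d} g(u)u\,dx + C_\varepsilon\|K\|_\infty \int_{\mathbb R^d} g_*(u)u\,dx.
\]
Using $(g_2)$ to pass from $g(u)u$ to $G(u)$ and from $g_*(u)u$ to $G_*(u)$, choosing $\varepsilon \in (0, 1/\|K\|_\infty)$, and recalling $\rho(\alpha;u) = \tilde\rho(u) + \overline\rho(\alpha;u)$, I would arrive at an inequality of the form $c_1\,\rho(\alpha;u) \leq c_2 \int_{\mathbb R^d} G_*(u)\,dx$ with $c_1, c_2 > 0$. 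Now invoke the continuous embedding $W^{\alpha,G}(\mathbb R^d) \hookrightarrow L^{G_*}(\mathbb R^d)$ of Theorem~\ref{4thm2} (applied with $B = G_*$, noting $G_*$ satisfies $\triangle_2$ by \eqref{2.7}), so $\int G_*(u)\,dx \leq \max\{\|u\|_{(G_*)}^{g_*^-}, \|u\|_{(G_*)}^{g_*^+}\} \leq \max\{(C\|u\|)^{g_*^-}, (C\|u\|)^{g_*^+}\}$ via Lemma~\ref{4lem2}(2). Combining with Lemma~\ref{4lem2}(4), $\min\{\|u\|^{g^-},\|u\|^{g^+}\} \leq \rho(\alpha;u)$, yields
\[
c_1 \min\{\|u\|^{g^-},\|u\|^{g^+}\} \;\leq\; c_2\, C'\max\{\|u\|^{g_*^-},\|u\|^{g_*^+}\}.
\]
Since $u \neq 0$ and $g^+ < g^-_*$ (so the left exponents are strictly smaller than the right ones), this forces $\|u\| \geq \delta$ for some $\delta > 0$ independent of $u$; the case distinction $\|u\| \lessgtr 1$ must be handled but is routine.

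Second, with $\|u\| \geq \delta$ secured, I would bound $J(u)$ from below on $\mathcal N$. Writing $J(u) = J(u) - \frac{1}{g^+}\langle J'(u),u\rangle$ (legitimate since $\langle J'(u),u\rangle = 0$), the reaction terms combine to $\int K(x)\big(\frac{1}{g^+}f(x,u)u - F(x,u)\big)dx$, which is $\geq 0$ by \eqref{4eq51} together with \eqref{4eqhlel} and $f(x,0)=0$ (that is, $\frac{1}{g^+}f(x,s)s - F(x,s) = \frac{1}{g^+}(f(x,s)s - g^+F(x,s)) \geq 0$), while the remaining terms give $\overline\rho(\alpha;u) + \tilde\rho(u) - \frac{1}{g^+}\big(\int g(u)u\,dx + \langle(-\triangle_g)^\alpha u,u\rangle\big) = \int\int \big(G - \tfrac{1}{g^+}g(\cdot)(\cdot)\big)\big(\tfrac{u(x)-u(y)}{|x-y|^\alpha}\big)\tfrac{dxdy}{|x-y|^d} + \int_{\mathbb R^d}\big(G(u) - \tfrac{1}{g^+}g(u)u\big)dx$. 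By Lemma~\ref{4lem3} and the parity of $G - \frac{1}{g^+}g(\cdot)(\cdot)$, each integrand is $\geq 0$, and one can quantify: using $(g_2)$, $G(s) - \frac{1}{g^+}g(s)s \geq (1 - \frac{g^+}{g^+})\,\cdots$ is too crude, so instead I would use that $G(s) - \frac{1}{g^+}g(s)s \geq \big(1 - \frac{g^-}{g^+}\big)G(s)$ — wait, $g^-/g^+ \le 1$ gives the wrong sign; rather $\frac{1}{g^+}g(s)s \le \frac{g^+}{g^+}G(s) = G(s)$, which only gives $\geq 0$. To get a quantitative bound I would instead note $G(s) - \frac{1}{g^+}g(s)s \ge \frac{g^+ - g^-}{g^+}\cdot\frac{g(s)s}{g^+}$ is not immediate either; the clean route is $J(u) \ge \big(1 - \tfrac{g^-}{g^+}\big)^{-1}$-free: simply use $G(s)-\tfrac1{g^+}g(s)s \ge G(s) - \tfrac1{g^-}g(s)s \cdot \tfrac{g^-}{g^+}$ and $\tfrac1{g^-}g(s)s \le$ nothing useful. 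The honest quantitative statement is: from $(g_2)$, $g(s)s \le g^+ G(s)$, so this term alone gives only $\ge 0$; the strict positivity of $m_0$ then comes from a contradiction argument — if $m_0 = 0$, take a minimizing sequence $u_n \in \mathcal N$ with $J(u_n) \to 0$; since $J(u_n) \ge$ (nonnegative integrals) and $\langle J'(u_n),u_n\rangle = 0$, one shows $\rho(\alpha;u_n) \to 0$ hence $\|u_n\| \to 0$, contradicting $\|u_n\| \ge \delta$.

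\textbf{The main obstacle} is the passage from ``$J \geq$ sum of manifestly nonnegative integrals'' to a genuine \emph{positive} lower bound. The cleanest resolution, which I would adopt, is the contradiction route just sketched: assume $m_0 = 0$, pick $u_n \in \mathcal N$ with $J(u_n) \to 0$; combining $J(u_n) = J(u_n) - \frac1{g^+}\langle J'(u_n),u_n\rangle \to 0$ with the nonnegativity of each piece forces $\int\int(G - \frac1{g^+}g(\cdot)(\cdot))(\cdots) \to 0$ and $\int(G(u_n) - \frac1{g^+}g(u_n)u_n)dx \to 0$; a standard argument using $(g_3)$ (which makes $G(s) - \frac1{g^+}g(s)s$ comparable to $G(s)$ away from where $g'$ degenerates, or more simply: $(g_3)$ gives $\frac{d}{ds}\big(G(s) - \frac1{g^+}g(s)s\big) \ge c\,g(s) > 0$, so $G(s) - \frac1{g^+}g(s)s \ge c_0\,G(s)$ is \emph{false} for large $s$ — instead integrate: $G(s)-\frac1{g^+}g(s)s = \int_0^s (1-\frac1{g^+})g(\tau) - \frac1{g^+}g'(\tau)\tau\,d\tau \ge \int_0^s \frac{g^+-g^-}{g^+}g(\tau)\,d\tau = \frac{g^+-g^-}{g^+}G(s)$) — this last computation is exactly the one inside the proof of Lemma~\ref{4lem3}, so in fact $G(s) - \frac1{g^+}g(s)s \ge \frac{g^+-g^-}{g^+}\,G(s) \ge 0$, and if $g^+ > g^-$ this is a quantitative bound $J(u) \ge \frac{g^+-g^-}{g^+}\,\rho(\alpha;u) \ge \frac{g^+-g^-}{g^+}\min\{\delta^{g^-},\delta^{g^+}\} > 0$. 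If $g^+ = g^-$ is not excluded, the contradiction route via $\|u_n\| \geq \delta$ still closes the argument. Either way, $m_0 > 0$.
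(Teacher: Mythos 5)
Your proposal takes a genuinely different route from the paper, but the route has a gap that I do not think can be repaired without bringing in the paper's key ingredient, Proposition~\ref{4prop3}.

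The paper's proof is short and does \emph{not} establish a uniform lower bound $\|u\|\geq\delta$ on $\mathcal N$. Instead it first shows, for $\|u\|\leq 1$, the sphere-type estimate
\[
J(u)\ \geq\ \bigl(1-\varepsilon\|K\|_\infty\bigr)\|u\|^{g^+}-C_\varepsilon\|K\|_\infty C_1\|u\|^{g^-_*},
\]
picks $\varrho\in(0,1)$ small so that $J\geq\eta>0$ on $\{\|u\|=\varrho\}$, and then for an arbitrary $u\in\mathcal N$ chooses $t_u$ with $\|t_u u\|=\varrho$ and invokes Proposition~\ref{4prop3} ($J(tu)\leq J(u)$ for all $t>0$ when $u\in\mathcal N$) to conclude $J(u)\geq J(t_u u)\geq\eta$. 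Your proposal never mentions Proposition~\ref{4prop3}, and that is precisely the tool that closes the argument.

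Your Step~1 (the uniform bound $\|u\|\geq\delta$ on $\mathcal N$) is essentially correct, although the paper does not need it. Your Step~2 is where the argument breaks down, in two independent ways. First, the claimed quantitative inequality
$G(s)-\frac{1}{g^+}g(s)s\geq\frac{g^+-g^-}{g^+}G(s)$
is false: from $(g_3)$ one computes
$\frac{d}{ds}\bigl(G(s)-\frac1{g^+}g(s)s\bigr)=\frac{g^+-1}{g^+}g(s)-\frac1{g^+}g'(s)s$,
and using $g'(s)s\geq(g^--1)g(s)$ bounds this derivative from \emph{above} by $\frac{g^+-g^-}{g^+}g(s)$, so integration yields $0\leq G(s)-\frac1{g^+}g(s)s\leq\frac{g^+-g^-}{g^+}G(s)$ — an upper bound, not a lower bound. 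Indeed for $G(s)=|s|^{g^-}+|s|^{g^+}$ one checks directly that $G(s)-\frac1{g^+}g(s)s=\frac{g^+-g^-}{g^+}|s|^{g^-}<\frac{g^+-g^-}{g^+}G(s)$. Second, the fallback contradiction argument does not close: in the model case $G(s)=\frac1p|s|^p$ (so $g^-=g^+=p$) both ``reserve'' terms $G(s)-\frac1{g^+}g(s)s$ vanish identically, and the only nonnegative piece left is $\int K(x)\bigl(\frac1{g^+}f(x,u_n)u_n-F(x,u_n)\bigr)dx$. Its convergence to zero gives no control on $\rho(\alpha;u_n)$, so you cannot conclude $\|u_n\|\to0$ and no contradiction with $\|u_n\|\geq\delta$ is reached. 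This is exactly the familiar difficulty of proving coercivity of $J$ on $\mathcal N$ without the (AR) condition: the identity $J(u)=J(u)-\frac1{g^+}\langle J'(u),u\rangle$ gives only $J\geq0$, not a positive bound. The paper side-steps it entirely via the fibering-map monotonicity of Proposition~\ref{4prop3}, which is the ingredient you need to add.
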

\begin{proof}
Let $u\in W^{\alpha,G}(\mathbb{R}^d)\backslash\lbrace 0\rbrace$ such that $\Vert u\Vert\leq 1$. Using assumption $(K_1)$, \eqref{4eq1}, Lemma \ref{4lem1} and Theorem \ref{4thm2}, we get
\begin{align*}
J(u)&=\int_{\mathbb{R}^{d}}\int_{\mathbb{R}^{d}}G\left( \frac{u(x)-u(y)}{\vert x-y\vert^{\alpha}}\right)\frac{dxdy}{\vert x-y\vert^{d}} + \int_{\mathbb{R}^{d}}G(u)\ dx-\int_{\mathbb{R}^{d}}K(x)F(x,u)\ dx\\
&\geq \int_{\mathbb{R}^{d}}\int_{\mathbb{R}^{d}}G\left( \frac{u(x)-u(y)}{\vert x-y\vert^{\alpha}}\right)\frac{dxdy}{\vert x-y\vert^{d}} + \left( 1-\varepsilon\Vert K\Vert_{\infty}\right) \int_{\mathbb{R}^{d}}G(u)\ dx\\
&-C_\varepsilon\Vert K\Vert_{\infty}\int_{\mathbb{R}^{d}}G_*(u)\ dx\\
& \geq \left( 1-\varepsilon\Vert K\Vert_{\infty}\right)\rho (\alpha;u)-C_\varepsilon \Vert K\Vert_{\infty}\int_{\mathbb{R}^{d}}G_*(u)\ dx\\
& \geq \left( 1-\varepsilon\Vert K\Vert_{\infty}\right)\Vert u\Vert ^{g^+} -C_\varepsilon \Vert K\Vert_{\infty}\max\left\lbrace \Vert u\Vert_{(G_*)}^{g^-_*},\Vert u\Vert_{(G_*)}^{g^+_*}\right\rbrace\\
 & \geq \left( 1-\varepsilon\Vert K\Vert_{\infty}\right)\Vert u\Vert ^{g^+} -C_\varepsilon \Vert K\Vert_{\infty}C_1\Vert u\Vert^{g^-_*},\ \text{for all}\ \varepsilon>0.
 \end{align*} 
Where $C_\varepsilon$ is the constant given by \eqref{4eq1}. Taking $\displaystyle{\varepsilon \in (0,\frac{1}{\Vert K\Vert_{\infty}})}$ and using the fact that $g^+<g^-_*$, we find $\varrho \in (0,1)$  small enough and $\eta>0$ such that
$$J(u)\geq \eta, \ \text{for all}\ \Vert u\Vert=\varrho.$$
Let $u\in \mathcal{N}$, choosing $t_u>0$ such that $\Vert t_u  u\Vert=\varrho$. By  Proposition \ref{4prop3}, $J(u)\geq J(t_uu)\geq\eta>0.$\\ Therefore, $m_0>0$. Thus the proof.
\end{proof}

The infimum of $J$ is attained on $\mathcal{N}$:

\begin{prop}\label{4prop5}
Under hypotheses $(H_f)$, $(H_G)$ and $(H_K)$, there exists $\widehat{u}\in\mathcal{N}$ such that $J(\widehat{u})=m_0$.
\end{prop}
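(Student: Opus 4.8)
The plan is to show that a minimizing sequence for $m_0$ on $\mathcal{N}$ is bounded, extract a weakly convergent subsequence, identify the weak limit as a nonzero element of $\mathcal{N}$, and then use weak lower semicontinuity together with Proposition \ref{4prop3} to conclude that the limit attains $m_0$. The key difficulty — flagged as item $(3)$ in the introduction — is the boundedness of the minimizing sequence, since the $(AR)$ condition is not assumed; I would handle this by a contradiction argument using $(f_3)$ and the estimate \eqref{4eq51}, as in \cite{411}.

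First I would take $\{u_n\}\subset \mathcal{N}$ with $J(u_n)\to m_0$. To prove $\{u_n\}$ is bounded, suppose $\|u_n\|\to +\infty$ and set $v_n = u_n/\|u_n\|$, so $\|v_n\|=1$; passing to a subsequence, $v_n\rightharpoonup v$ in $W^{\alpha,G}(\mathbb{R}^d)$. Using $u_n\in\mathcal{N}$, i.e. $\langle J'(u_n),u_n\rangle=0$, together with Lemma \ref{4lem3} (or directly \eqref{4eq51}) one gets, for a suitable $t\in(0,1)$,
\begin{align*}
m_0 + o(1) = J(u_n) &\geq J(u_n) - \tfrac{1}{g^+}\langle J'(u_n),u_n\rangle \\
&= \int_{\mathbb{R}^d}\!\!\int_{\mathbb{R}^d}\!\Big[G\big(\tfrac{D u_n}{|x-y|^\alpha}\big)-\tfrac{1}{g^+}g\big(\tfrac{D u_n}{|x-y|^\alpha}\big)\tfrac{D u_n}{|x-y|^\alpha}\Big]\tfrac{dxdy}{|x-y|^d} \\
&\quad + \int_{\mathbb{R}^d}\!\Big[G(u_n)-\tfrac{1}{g^+}g(u_n)u_n\Big]dx + \tfrac{1}{g^+}\!\int_{\mathbb{R}^d}\!K(x)\big[f(x,u_n)u_n - g^+F(x,u_n)\big]dx,
\end{align*}
where I write $D u_n = u_n(x)-u_n(y)$ to keep the display compact. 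The first two bracketed integrands are nonnegative by Lemma \ref{4lem3}, and the last integral is nonnegative by \eqref{4eq51} together with $F(x,\cdot)\geq 0$; but an alternative cleaner route is to divide by $\|u_n\|^{g^+}$ and use $(f_3)$ with Fatou's lemma exactly as in the proof of Proposition \ref{4prop2}: if $v\neq 0$ on a positive-measure set then $\int K(x)F(x,u_n)/\|u_n\|^{g^+}\to+\infty$, forcing $J(u_n)/\|u_n\|^{g^+}\to-\infty$, contradicting $J(u_n)\to m_0$; while if $v=0$ a.e. one derives a contradiction from $\langle J'(u_n),u_n\rangle=0$ after dividing by $\|u_n\|^{g^+}$, since the left side then stays bounded below by a positive constant (using Lemma \ref{4lem2}) whereas $(f_1)$–$(f_2)$ and Lemma \ref{4lem4} make the reaction term vanish. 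Hence $\{u_n\}$ is bounded.

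Next, by reflexivity $u_n\rightharpoonup \widehat{u}$ (subsequence) in $W^{\alpha,G}(\mathbb{R}^d)$. To see $\widehat{u}\neq 0$: by Proposition \ref{4prop44} there are $\varrho,\eta>0$ with $J(w)\geq\eta$ whenever $\|w\|=\varrho$, and Proposition \ref{4prop3} gives $\|u_n\|\geq \varrho$ for all $n$; if $\widehat{u}=0$, then by Lemma \ref{4lem4}(2), $\int K(x)f(x,u_n)u_n\,dx\to 0$, and since $\langle J'(u_n),u_n\rangle=0$ this forces $\rho(\alpha;u_n)\to 0$, hence $\|u_n\|\to 0$ by Lemma \ref{4lem2}(4), contradicting $\|u_n\|\geq\varrho$. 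So $\widehat{u}\neq 0$. Then I would show $\langle J'(\widehat{u}),\widehat{u}\rangle\leq 0$: by the weak lower semicontinuity of the modular part of $J'$ paired with $u$ (which follows from convexity of $G$ and Fatou-type arguments) and Lemma \ref{4lem4}(2) for the reaction term, passing to the limit in $0=\langle J'(u_n),u_n\rangle$ yields $\langle J'(\widehat{u}),\widehat{u}\rangle\leq 0$. Combined with Proposition \ref{4prop2}, the unique $t_{\widehat{u}}>0$ with $t_{\widehat{u}}\widehat{u}\in\mathcal{N}$ satisfies $t_{\widehat{u}}\leq 1$ (since $h_{\widehat{u}}(1)\leq 0$ and $h_{\widehat{u}}>0$ near $0$, while $h_{\widehat{u}}$ changes sign only once).

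Finally, by Lemma \ref{4lem4}(1) the reaction part of $J$ is weakly continuous, and the modular part is weakly lower semicontinuous, so
$$
m_0 \;\leq\; J(t_{\widehat{u}}\widehat{u}) \;\leq\; \liminf_{n\to\infty} J(t_{\widehat{u}} u_n) \;\leq\; \liminf_{n\to\infty} J(u_n) \;=\; m_0,
$$
where the first inequality is $t_{\widehat{u}}\widehat{u}\in\mathcal{N}$, the third is Proposition \ref{4prop3} applied to $u_n\in\mathcal{N}$, and the second uses weak lower semicontinuity of $J$ at $t_{\widehat{u}}\widehat{u}$ (again modular part l.s.c., reaction part weakly continuous). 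Hence all inequalities are equalities; in particular $t_{\widehat{u}}=1$ (otherwise $J(t_{\widehat{u}}\widehat{u})<J(\widehat{u})$ would be needed, but here one checks $J(t_{\widehat{u}}\widehat{u})=m_0$ forces $t_{\widehat{u}}=1$ by the strict-maximum structure of $k_{\widehat{u}}$ from Proposition \ref{4prop3}), so $\widehat{u}\in\mathcal{N}$ and $J(\widehat{u})=m_0$. The main obstacle, as anticipated, is the boundedness step; everything after it is a routine weak-limit-plus-lower-semicontinuity argument, with the only mild subtlety being the lower semicontinuity of the double-integral (Gagliardo) term, which follows from convexity of $G$ and Fatou's lemma applied to the nonnegative integrand against the measure $|x-y|^{-d}\,dx\,dy$.
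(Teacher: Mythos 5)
Your overall architecture matches the paper's: take a minimizing sequence, prove boundedness by contradiction via $v_n=u_n/\|u_n\|$, pass to a weak limit $\widehat u\neq 0$, and conclude with Proposition \ref{4prop3}, Lemma \ref{4lem4} and Fatou. However, there is a genuine gap in your treatment of the case $v=0$ in the boundedness argument.

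You claim that if $v=0$ a.e., one gets a contradiction from $\langle J'(u_n),u_n\rangle=0$ by dividing by $\|u_n\|^{g^+}$: "the left side then stays bounded below by a positive constant (using Lemma \ref{4lem2}) whereas $(f_1)$--$(f_2)$ and Lemma \ref{4lem4} make the reaction term vanish." Neither half of this works. By Lemma \ref{4lem2}(4) and $(g_2)$ the positive part of $\langle J'(u_n),u_n\rangle$ is only bounded below by $g^-\min\{\|u_n\|^{g^-},\|u_n\|^{g^+}\}=g^-\|u_n\|^{g^-}$ (for large $n$), and $\|u_n\|^{g^--g^+}\to 0$ when $g^-<g^+$, so dividing by $\|u_n\|^{g^+}$ does \emph{not} keep that part bounded below by a positive constant. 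Dividing by $\|u_n\|^{g^-}$ instead fixes that half, but then the reaction term $\int K f(x,u_n)u_n/\|u_n\|^{g^-}$ does not obviously vanish: via \eqref{4eq81} it is controlled by $\epsilon\rho(\alpha;u_n)+C_\epsilon\int G_*(u_n)$, and the $G_*$ piece may grow like $\|u_n\|^{g_*^+}\gg\|u_n\|^{g^-}$. Moreover Lemma \ref{4lem4} is not directly applicable to $u_n$ here since $u_n$ is not weakly convergent (it is unbounded by hypothesis). The paper sidesteps all of this by applying Proposition \ref{4prop3} \emph{first}: since $u_n\in\mathcal N$, $J(u_n)=J(\|u_n\|v_n)\ge J(tv_n)\ge \min\{t^{g^-},t^{g^+}\}-\int K F(x,tv_n)\,dx$ for every $t>0$ (Lemma \ref{4lem2}(4) with $\|v_n\|=1$); now $tv_n\rightharpoonup 0$ is a \emph{bounded} sequence, so Lemma \ref{4lem4}(1) applies and the reaction term tends to $0$, giving $m_0\ge\min\{t^{g^-},t^{g^+}\}$ for all $t>0$, which is absurd. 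You should replace your $v=0$ argument with this one.

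Two smaller remarks. First, your justification that $\widehat u\neq 0$ (via Lemma \ref{4lem4}(2), $\rho(\alpha;u_n)\to 0$, Lemma \ref{4lem2}(4)) is actually fine and somewhat more explicit than the paper's one-line claim; no issue there. Second, your argument for $t_{\widehat u}=1$ ("by the strict-maximum structure of $k_{\widehat u}$ from Proposition \ref{4prop3}") is circular, since Proposition \ref{4prop3} applies only to elements of $\mathcal N$ and you have not yet shown $\widehat u\in\mathcal N$. For the statement as written this is harmless — once the chain of equalities gives $J(t_{\widehat u}\widehat u)=m_0$ with $t_{\widehat u}\widehat u\in\mathcal N$ you may simply rename $t_{\widehat u}\widehat u$ as the minimizer — but if you want $t_{\widehat u}=1$ (as the paper does, so that the minimizer is the actual weak limit of the sequence, which is used later) you should follow the paper: first pass to the limit in $\langle J'(u_n),u_n\rangle=0$ using Fatou and Lemma \ref{4lem4} to get $\langle J'(\widehat u),\widehat u\rangle\le 0$, then use \eqref{4eq50} to rule out $t_{\widehat u}>1$, and finally use \eqref{4eq51}, Lemma \ref{4lem3}, Lemma \ref{4lem4}, and Fatou applied to $J(\cdot)-\tfrac{1}{g^+}\langle J'(\cdot),\cdot\rangle$ to show $t_{\widehat u}<1$ forces the strict inequality $m_0<m_0$.
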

\begin{proof}
Let $\lbrace u_n\rbrace_{n\in\mathbb{N}}\subset \mathcal{N}$ such that 
\begin{equation}\label{4eq85}
J(u_n)\mathop{\rightarrow}_{n\rightarrow +\infty} m_0.
\end{equation}
Firstly, we prove that $\lbrace u_n\rbrace_{n\in\mathbb{N}}$ is bounded in $W^{\alpha,G}(\mathbb{R}^N)$. We argue by contradiction, assume that there exists a subsequence, denoted again by $\lbrace u_n\rbrace_{n\in\mathbb{N}}$ such that 
$$\Vert u_n\Vert \rightarrow +\infty\ \text{as}\ n\rightarrow +\infty,$$
let 
\begin{equation}\label{4eq300}
v_n:=\frac{u_n}{\Vert u_n\Vert}\ \text{for all}\ n\in\mathbb{N}.
\end{equation}
Since $\lbrace v_n\rbrace_{n\in\mathbb{N}}$ is bounded in $W^{\alpha,G}(\mathbb{R}^d)$, which is a reflexive space, there exists $v\in W^{\alpha,G}(\mathbb{R}^d)$ such that 
$$v_n\rightharpoonup v \ \text{in}\ W^{\alpha,G}(\mathbb{R}^d),$$
and
\begin{equation}\label{4eq55}
v_n(x)\rightarrow v(x)\ \text{as}\ n\rightarrow+\infty,\ \text{for a.a. in}\ \mathbb{R}^{d}.
\end{equation}
Let prove that $v\neq 0$. Since $\lbrace u_n\rbrace_{n\in\mathbb{N}} \in \mathcal{N}$, according to Proposition \ref{4prop3} and Lemma \ref{4lem2}, for all $t\geq 0$, we have  
\begin{align}\label{4eq9}
J(u_n) & = J(\Vert u_n\Vert v_n )\geq J(t v_n)\nonumber\\
& = \int_{\mathbb{R}^{d}}\int_{\mathbb{R}^{d}}G\left( \frac{tv_n(x)-tv_n(y)}{\vert x-y\vert^{\alpha}}\right)\frac{dxdy}{\vert x-y\vert^{d}} + \int_{\mathbb{R}^{d}}G(tv_n)dx\nonumber \\ 
& -\int_{\mathbb{R}^{d}}K(x)F(x,tv_n) dx\nonumber\\
& \geq \min\left\lbrace \Vert tv_n\Vert^{g^-},\Vert tv_n\Vert^{g^+} \right\rbrace  -\int_{\mathbb{R}^{d}}K(x)F(x,tv_n) dx\nonumber\\
& = \min\left\lbrace  t^{g^-},t^{g^+} \right\rbrace  -\int_{\mathbb{R}^{d}}K(x)F(x,tv_n) dx.
\end{align}
Assume that $v_n\rightharpoonup v=0$,  by Lemma \ref{4lem4}, we obtain
$$\int_{\mathbb{R}^{d}}K(x)F(x,tv_n) dx\rightarrow 0,\ \text{for all}\ t>0.$$
 Passing to the limit in \eqref{4eq9}, we get 
$$+\infty>m_0\geq \min\left\lbrace  t^{g^-},t^{g^+} \right\rbrace,\ \text{for all}\ t>0.$$ 
Thus the contradiction, therefore, $v \neq 0$.

Using \eqref{4eq300} and Lemma \ref{4lem2}, we see that
\begin{align*}
J( u_n) & =J(\Vert u_n\Vert v_n )\\
 & \leq g^+\max\left\lbrace \Vert u_n\Vert^{g^-}\Vert v_n\Vert^{g^-},\Vert u_n\Vert^{g^+}\Vert v_n\Vert^{g^+} \right\rbrace - \int_{\mathbb{R}^{d}}K(x)F(x,\Vert u_n\Vert v_n) dx\\
 & \leq g^+\max\left\lbrace \Vert u_n\Vert^{g^-},\Vert u_n\Vert^{g^+} \right\rbrace - \int_{\mathbb{R}^{d}}K(x)F(x,\Vert u_n\Vert v_n) dx,
\end{align*}
which is equivalent to 
\begin{align}\label{eq6}
\frac{J( u_n)}{\max\left\lbrace \Vert u_n\Vert^{g^-},\Vert u_n\Vert^{g^+} \right\rbrace} & \leq g^+- \int_{\mathbb{R}^{d}}K(x)\frac{F(x,\Vert u_n\Vert v_n)}{\max\left\lbrace \Vert u_n\Vert^{g^-},\Vert u_n\Vert^{g^+} \right\rbrace} dx.
\end{align}
Exploiting hypotheses $(f_3)$, $(K_1)$, Fatou's lemma and the fact that $v\neq 0$, we obtain 
\begin{align}\label{eq7}
\liminf_{n\rightarrow +\infty}\int_{\mathbb{R}^{d}}K(x)\frac{F(x,\Vert u_n\Vert v_n)}{\max\left\lbrace \Vert u_n\Vert^{g^-},\Vert u_n\Vert^{g^+} \right\rbrace} dx & =\liminf_{n\rightarrow +\infty}\int_{\mathbb{R}^{d}}K(x)\frac{F(x,\Vert u_n\Vert v_n)}{\Vert u_n\Vert^{g^+}} dx\nonumber\\
&=\liminf_{n\rightarrow +\infty}\int_{\mathbb{R}^{d}}K(x)\frac{F(x,\Vert u_n\Vert v_n)}{\left( \Vert u_n\Vert \vert v_n\vert\right) ^{g^+}}\vert v_n\vert^{g^+} dx \nonumber\\
&=+\infty.
\end{align}
By $(\ref{eq6})$, it yields that 
$$\frac{J( u_n)}{\max\left\lbrace \Vert u_n\Vert^{g^-},\Vert u_n\Vert^{g^+} \right\rbrace}\rightarrow - \infty\ \text{as}\ \ n\rightarrow +\infty,$$
which leads to a contradiction with   \eqref{4eq85}. Therefore, $\lbrace u_n\rbrace_{n\in\mathbb{N}}$ is bounded in $W^{\alpha,G}(\mathbb{R}^d)$. Up to a subsequence, 
$$u_n\rightharpoonup\widehat{u}\ \text{in}\ W^{\alpha,G}(\mathbb{R}^d),$$
and 
\begin{equation}\label{4eq555}
u_n(x)\rightarrow \widehat{u}(x)\ \text{ as}\ n\rightarrow +\infty,\ \text{for a.a.}\ x\in\mathbb{R}^d.
\end{equation}
Suppose that $\widehat{u} =0$, then 
$$0<m_0\leq J(u_n)\mathop{\longrightarrow}_{n\rightarrow +\infty} J(0)=0,$$
which is a contradiction. Thus, $\widehat{u} \neq 0$. According to Proposition \ref{4prop2}, there is a unique $t_{\widehat{u}}>0$ such that 
\begin{equation}\label{4eq122}
t_{\widehat{u}}\widehat{u}\in \mathcal{N}.
\end{equation}
By \eqref{4eq555}, Proposition \ref{4prop3}, Lemma \ref{4lem4} and Fatou's lemma,  it follows that
\begin{align}\label{4eq108}
m_0=\lim_{n\rightarrow +\infty}J(u_n)& \geq \liminf_{n\rightarrow +\infty} J(t_{\widehat{u}}u_n)\geq J(t_{\widehat{u}}\widehat{u})\nonumber\\
& \geq m_0.
\end{align}
Therefore, $m_0=J(t_{\widehat{u}}\widehat{u})=\inf\limits_{\mathcal{N}}J.$\\
Next, we shall prove that $t_{\widehat{u}}=1$. Since $\lbrace u_n\rbrace_{n\in\mathbb{N}}\in\mathcal{N}$, then
\begin{align*}
\int_{\mathbb{R}^{d}}\int_{\mathbb{R}^{d}}g\left( \frac{u_n(x)-u_n(y)}{\vert x-y\vert^{\alpha}}\right)\frac{u_n(x)-u_n(y)}{\vert x-y\vert^{\alpha+d}}dxdy + \int_{\mathbb{R}^{d}}g(u_n)u_ndx
=\int_{\mathbb{R}^{d}}K(x)f(x,u_n)u_n dx,\ \text{for all}\ n\in\mathbb{N}.
\end{align*}
By \eqref{4eq555}, Lemma \ref{4lem4} and Fatou's lemma, we deduce that 
\begin{align}\label{4eq120}
\int_{\mathbb{R}^{d}}\int_{\mathbb{R}^{d}}g\left( \frac{\widehat{u}(x)-\widehat{u}(y)}{\vert x-y\vert^{\alpha}}\right)\frac{\widehat{u}(x)-\widehat{u}(y)}{\vert x-y\vert^{\alpha+d}}dxdy + \int_{\mathbb{R}^{d}}g(\widehat{u})\widehat{u}dx
\leq\int_{\mathbb{R}^{d}}K(x)f(x,\widehat{u})\widehat{u} dx,
\end{align}
Suppose that $t_{\widehat{u}}>1$.
From \eqref{4eq122} and Lemma \ref{4lem1}, one has
\begin{align}\label{4eq800}
\displaystyle{\int_{\mathbb{R}^{d}}\frac{K(x)f(x,t_{\widehat{u}}\widehat{u})\vert \widehat{u}\vert^{g^+-1}\widehat{u}}{\vert t_{\widehat{u}}\widehat{u}\vert^{g^+-1}}\ dx} & \leq \int_{\mathbb{R}^{d}}\int_{\mathbb{R}^{d}}g\left( \frac{\widehat{u}(x)-\widehat{u}(y)}{\vert x-y\vert^{\alpha}}\right)\frac{\widehat{u}(x)-\widehat{u}(y)}{\vert x-y\vert^{\alpha +d}}dxdy\nonumber\\
& +\displaystyle{ \int_{\mathbb{R}^{d}}g(\widehat{u})\widehat{u}\ dx .}
\end{align}
Putting together \eqref{4eq120} and \eqref{4eq800}, using  \eqref{4eq50} and hypothesis $(K_1)$, we find that
\begin{align}\label{h3}
 0\leq \int_{\mathbb{R}^{d}}K(x)\vert u\vert^{g^+-1}u\left[ \frac{f(x,u)}{\vert u\vert^{g^+-1}}- \frac{f(x,t_uu)}{\vert t_u u\vert^{g^+-1}}\right] \ dx< 0.
\end{align}
Thus the contradiction. Therefore, $0<t_u\leq 1$.\\
Suppose that $t_{\widehat{u}}\neq 1$. Using \eqref{4eq51}, Lemmas \ref{4lem3}, \ref{4lem4} , Fatou's lemma and hypothesis $(K_1)$, we see that 
\begin{align*}
m_0&=J(t_{\widehat{u}}\widehat{u})=J(t_{\widehat{u}}\widehat{u})-\frac{1}{g^+}\langle J^{'}(t_{\widehat{u}}\widehat{u}),t_{\widehat{u}}\widehat{u}\rangle\\
&=\int_{\mathbb{R}^d}\int_{\mathbb{R}^d}\left[ G\left( \frac{t_{\widehat{u}}\widehat{u}(x)-t_{\widehat{u}}\widehat{u}(y)}{\vert x-y\vert^{\alpha}}\right)-\frac{1}{g^+}g\left( \frac{t_{\widehat{u}}\widehat{u}(x)-t_{\widehat{u}}\widehat{u}(y)}{\vert x-y\vert^{\alpha}}\right)\frac{t_{\widehat{u}}\widehat{u}(x)-t_{\widehat{u}}\widehat{u}(y)}{\vert x-y\vert^{\alpha}}\right] \frac{dxdy}{\vert x-y\vert^d}\\
& + \int_{\mathbb{R}^d} G(t_{\widehat{u}}\widehat{u})-\frac{1}{g^+}g(t_{\widehat{u}}\widehat{u})t_{\widehat{u}}\widehat{u} dx+\int_{\mathbb{R}^d} K(x)\left[ \frac{1}{g^+}f(x,t_{\widehat{u}}\widehat{u})t_{\widehat{u}}\widehat{u}-F(x,t_{\widehat{u}}\widehat{u}) \right]dx\\
&<\int_{\mathbb{R}^d}\int_{\mathbb{R}^d}\left[ G\left( \frac{\widehat{u}(x)-\widehat{u}(y)}{\vert x-y\vert^{\alpha}}\right)-\frac{1}{g^+}g\left( \frac{\widehat{u}(x)-\widehat{u}(y)}{\vert x-y\vert^{\alpha}}\right)\frac{\widehat{u}(x)-\widehat{u}(y)}{\vert x-y\vert^{\alpha}}\right] \frac{dxdy}{\vert x-y\vert^d}\\
& + \int_{\mathbb{R}^d} G(\widehat{u})-\frac{1}{g^+}g(\widehat{u})\widehat{u} dx+\int_{\mathbb{R}^d} K(x)\left[ \frac{1}{g^+}f(x,\widehat{u})\widehat{u}-F(x,\widehat{u}) \right]dx \\
&\leq\liminf_{n\rightarrow +\infty}\left[ \int_{\mathbb{R}^d}\int_{\mathbb{R}^d}\left[ G\left( \frac{u_n(x)-u_n(y)}{\vert x-y\vert^{\alpha}}\right)-\frac{1}{g^+}g\left( \frac{u_n(x)-u_n(y)}{\vert x-y\vert^{\alpha}}\right)\frac{u_n(x)-u_n(y)}{\vert x-y\vert^{\alpha}}\right] \frac{dxdy}{\vert x-y\vert^d}\right.\\
& \left.+ \int_{\mathbb{R}^d} G(u_n)-\frac{1}{g^+}g(u_n)u_n dx+\int_{\mathbb{R}^d} K(x)\left[ \frac{1}{g^+}f(x,u_n)u_n-F(x,u_n) \right]dx \right] \\
&=\liminf_{n\rightarrow +\infty} J(u_n)=J(\widehat{u})\\
&=m_0.
\end{align*} 
Which is a contradiction. Thus, $t_{\widehat{u}}=1$. Hence,
 $$m_0=J(\widehat{u})=\inf\limits_{\mathcal{N}}J.$$
This completes the proof.
\end{proof}
In the following we prove that  $\widehat{u}$ is a critical point of the functional $J$.  
\begin{prop}\label{4prop9}
Assume that the hypotheses  $(H_f)$, $(H_G)$ and $(H_K)$ hold. Then, $\widehat{u}$ is a critical point of $J$. Hence, $\widehat{u}$ is a least energy weak solution of problem \eqref{P}.
\end{prop}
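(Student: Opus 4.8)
The plan is to show that $\widehat u$ is a critical point of $J$ by combining the Nehari-manifold minimality from Proposition \ref{4prop5} with a non-smooth Lagrange-multiplier argument, exactly the idea flagged in point $(2)$ of the introduction. Since $f(x,\cdot)$ is only locally Lipschitz, the constraint functional $\psi(u):=\langle J'(u),u\rangle$ need not be $C^1$, so $\mathcal N$ may fail to be a differentiable manifold; instead I would work with the locally Lipschitz functional $\Phi:=J$ itself together with the constraint, and invoke Clarke's non-smooth multiplier rule \cite[Theorem 10.47, p.~221]{418}. Concretely: $\widehat u$ minimizes $J$ over $\mathcal N$, so there exist $\lambda_0\ge 0$ and $\mu\in\mathbb R$, not both zero, with $0\in\lambda_0\,\partial J(\widehat u)+\mu\,\partial\psi(\widehat u)$ in $(W^{\alpha,G}(\mathbb R^d))^*$. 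Because the map $u\mapsto\rho(\alpha;u)$ is $C^1$, only the reaction term carries the non-smoothness, and $\partial J(\widehat u)=\{J'(\widehat u)\}$ actually still holds here (the functional $J$ is $C^1$, as recorded in Section 4); the non-smoothness enters only through the derivative of $\psi$. So I rewrite the multiplier rule as: there is $\mu\in\mathbb R$ with $J'(\widehat u)=\mu\,\zeta$ for some $\zeta\in\partial\psi(\widehat u)$, after ruling out the degenerate case $\lambda_0=0$.

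The key computation is to test this identity against $\widehat u$ and show $\mu=0$. Testing gives $0=\langle J'(\widehat u),\widehat u\rangle=\mu\,\langle\zeta,\widehat u\rangle$, where $\widehat u\in\mathcal N$ was used. It therefore suffices to prove $\langle\zeta,\widehat u\rangle\neq 0$ for every $\zeta\in\partial\psi(\widehat u)$; this is where hypothesis $(f_4)$ does the work. Writing out $\psi(u)=\rho_1(u)+\rho_2(u)-\int_{\mathbb R^d}K(x)f(x,u)u\,dx$, where $\rho_1,\rho_2$ collect the Gagliardo and the $\int G(u)$ terms, the generalized subdifferential chain rule gives that any $\zeta\in\partial\psi(\widehat u)$ acts on $\widehat u$ as
\begin{align*}
\langle\zeta,\widehat u\rangle &= \int_{\mathbb R^d}\!\!\int_{\mathbb R^d}\Big[g'\Big(\tfrac{\widehat u(x)-\widehat u(y)}{|x-y|^\alpha}\Big)\Big(\tfrac{\widehat u(x)-\widehat u(y)}{|x-y|^\alpha}\Big)^2+g\Big(\tfrac{\widehat u(x)-\widehat u(y)}{|x-y|^\alpha}\Big)\tfrac{\widehat u(x)-\widehat u(y)}{|x-y|^\alpha}\Big]\frac{dxdy}{|x-y|^d}\\
&\quad+\int_{\mathbb R^d}\big[g'(\widehat u)\widehat u^2+g(\widehat u)\widehat u\big]dx-\int_{\mathbb R^d}K(x)\big[f^*(x,\widehat u)\widehat u^2+f(x,\widehat u)\widehat u\big]dx
\end{align*}
for some measurable selection $f^*(x,\widehat u)\in\partial_s f(x,\widehat u)$. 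Using $(g_3)$ to bound $g'(t)t\le(g^+-1)g(t)$ on the diffusion terms from above by $g^+$ times the corresponding $g(t)t$-integrals, using that $\widehat u\in\mathcal N$ to substitute $\rho_1'(\widehat u)\widehat u+\rho_2'(\widehat u)\widehat u=\int K(x)f(x,\widehat u)\widehat u\,dx$, and then using $(f_4)$ in the form $(g^+-1)f(x,s)s<f^*(x,s)s^2$ pointwise, one obtains $\langle\zeta,\widehat u\rangle\le g^+\!\int K f(x,\widehat u)\widehat u\,dx-\int K\big[f^*(x,\widehat u)\widehat u^2+f(x,\widehat u)\widehat u\big]dx<(g^+-1)\!\int Kf(x,\widehat u)\widehat u\,dx-\int Kf^*(x,\widehat u)\widehat u^2dx<0$, provided $\int_{\mathbb R^d}K(x)f(x,\widehat u)\widehat u\,dx>0$. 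This last positivity follows because $\widehat u\neq 0$, $\widehat u\in\mathcal N$ forces $\rho_1'(\widehat u)\widehat u+\rho_2'(\widehat u)\widehat u=\int Kf(x,\widehat u)\widehat u\,dx$, and the left side is strictly positive by $(g_2)$. Hence $\langle\zeta,\widehat u\rangle<0\neq 0$, which forces $\mu=0$, and therefore $J'(\widehat u)=0$.

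Finally, having shown $J'(\widehat u)=0$, the statement that $\widehat u$ is a \emph{least energy} weak solution is immediate: any nontrivial critical point lies in $\mathcal N$ by definition of the Nehari manifold, so $J(\widehat u)=m_0=\inf_{\mathcal N}J\le J(v)$ for every nontrivial critical point $v$, while $\widehat u\neq 0$ by Proposition \ref{4prop5}. I expect the main obstacle to be the bookkeeping in the non-smooth multiplier rule — specifically, excluding the abnormal multiplier case $\lambda_0=0$ (which would read $0\in\partial\psi(\widehat u)$, again contradicted by the sign computation above since it would give $\langle\zeta,\widehat u\rangle=0$ for some $\zeta$) and making the chain-rule formula for $\partial\psi$ rigorous given that $f(x,\cdot)$ is merely locally Lipschitz and the domain is unbounded, where one needs the embeddings of Theorem \ref{4thm2} together with $(f_1)$–$(f_2)$ to justify differentiating under the integral sign and to guarantee the relevant superposition operators are well-defined and continuous into the dual space.
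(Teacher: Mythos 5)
Your proposal is correct and follows essentially the same route as the paper: both apply Clarke's non-smooth multiplier rule to the constraint $\varphi(u)=\langle J'(u),u\rangle=0$, test the resulting inclusion against $\widehat u$, and use $(g_3)$ together with $(f_4)$ and $\widehat u\in\mathcal N$ to obtain the strict sign $\langle\zeta,\widehat u\rangle<0$ for every $\zeta\in\partial\varphi(\widehat u)$, forcing the multiplier to vanish. The only differences are cosmetic (a typo identifying $\rho_2$ with $\int G(u)\,dx$ rather than $\int g(u)u\,dx$, and a spurious "$<$" where an "$=$" belongs in the final chain), and in fact you are slightly more careful than the paper in explicitly ruling out the abnormal case $\lambda_0=0$ of the multiplier rule, which the same sign computation handles.
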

\begin{proof}
Let consider the functional $\varphi:W^{\alpha,G}(\mathbb{R}^d)\rightarrow \mathbb{R}$ defined by 
\begin{align*}
\varphi(u) & =\langle J^{'}(u),u\rangle =\int_{\mathbb{R}^{d}}\int_{\mathbb{R}^{d}}g\left( \frac{u(x)-u(y)}{\vert x-y\vert^{\alpha}}\right)\frac{u(x)-u(y)}{\vert x-y\vert^{\alpha}}\frac{dxdy}{\vert x-y\vert^{d}}\\
& + \int_{\mathbb{R}^{d}}g(u)u dx - \int_{\mathbb{R}^{d}} K(x)f(x,u)u dx.
\end{align*}
By hypotheses $(H_f)$, $\varphi$  is locally Lipschitz (see \cite[Theorem 2.7.2, p. 221]{419}).\\
Let $u\in W^{\alpha,G}(\mathbb{R}^d)$,  for all $\varphi^*_u\in \partial \varphi(u)$, there is $f^*(x,u)\in \partial_u f(x,u)$ such that  
\begin{align}\label{4eq500}
\langle\varphi^*_u,v\rangle 
& =\int_{\mathbb{R}^{d}}\int_{\mathbb{R}^{d}}g\left( \frac{u(x)-u(y)}{\vert x-y\vert^{\alpha}}\right)\frac{v(x)-v(y)}{\vert x-y\vert^{\alpha}}\frac{dxdy}{\vert x-y\vert^{d}}\nonumber\\
& + \int_{\mathbb{R}^{d}}g(u)v dx - \int_{\mathbb{R}^{d}} K(x)f(x,u)v dx\nonumber\\
& + \int_{\mathbb{R}^{d}}\int_{\mathbb{R}^{d}}g^{'}\left( \frac{u(x)-u(y)}{\vert x-y\vert^{\alpha}}\right)\frac{[v(x)-v(y)]^{2}}{\vert x-y\vert^{2\alpha}}\frac{dxdy}{\vert x-y\vert^{d}}\nonumber\\
& + \int_{\mathbb{R}^{d}}g^{'}(u)v^{2} dx - \int_{\mathbb{R}^{d}} K(x)f^*(x,u)v^{2} dx,\ \text{for all}\ v\in W^{\alpha,G}(\mathbb{R}^d).
\end{align}
From Proposition \ref{4prop5}, we have 
$$J(\widehat{u})=m_0=\inf\left\lbrace J(u):\ \varphi(u)=0,\ u\in W^{\alpha,G}(\mathbb{R}^d)\backslash\lbrace 0\rbrace \right\rbrace.$$
According to the non-smooth multiplier rule of Clarke \cite[Theorem 10.47, p. 221]{418}, there exists $\lambda_0\geq 0$  such that
$$
0\in\partial(J+ \lambda_0 \varphi)(\widehat{u}).
$$
 By the subdifferential calculus of Clarke \cite[p. 48]{419}, it follows that 

$$0\in\partial J(\widehat{u})+\lambda_0 \partial\varphi(\widehat{u}).$$
Thus,
\begin{equation}\label{4eq11}
0=J^{'}(\widehat{u})+\lambda_0 \varphi^*_{\widehat{u}}\ \ \text{in}\ (W^{\alpha,G}(\mathbb{R}^d))^*,\ \ \text{for all}\ \varphi^*_{\widehat{u}}\in \partial\varphi(\widehat{u}).
\end{equation}
 Since $\widehat{u}\in \mathcal{N}$, we have 
 \begin{equation}\label{4eq10}
0=\langle J^{'}(\widehat{u}),\widehat{u}\rangle+\lambda_0\langle \varphi^{*}_{\widehat{u}},\widehat{u}\rangle =\lambda_0\langle \varphi^{*}_{\widehat{u}},\widehat{u}\rangle,\ \ \text{for all}\ \varphi^*_{\widehat{u}}\in \partial\varphi(\widehat{u})
\end{equation}
Using \eqref{4eq500}, hypotheses $(f_4)$, $(g_3)$ and the fact that $\widehat{u}\in\mathcal{N}$, we get
\begin{align}\label{4eq12}
 \langle \varphi^{*}_{\widehat{u}},\widehat{u}\rangle
 & = \int_{\mathbb{R}^{d}}\int_{\mathbb{R}^{d}}g^{'}\left( \frac{\widehat{u}(x)-\widehat{u}(y)}{\vert x-y\vert^{\alpha}}\right)\frac{[\widehat{u}(x)-\widehat{u}(y)]^{2}}{\vert x-y\vert^{2\alpha}}\frac{dxdy}{\vert x-y\vert^{d}}\nonumber\\
& +  \int_{\mathbb{R}^{d}}g^{'}(\widehat{u})\widehat{u}^{2} dx - \int_{\mathbb{R}^{d}} K(x)f^{*}(x,\widehat{u})\widehat{u}^{2} dx\nonumber\\
& \leq [g^+-1] \left[ \int_{\mathbb{R}^{d}}\int_{\mathbb{R}^{d}}g\left( \frac{\widehat{u}(x)-\widehat{u}(y)}{\vert x-y\vert^{\alpha}}\right)\frac{\widehat{u}(x)-\widehat{u}(y)}{\vert x-y\vert^{\alpha}}\frac{dxdy}{\vert x-y\vert^{d}}\right.\nonumber\\
&\left.+   \int_{\mathbb{R}^{d}}g(\widehat{u})\widehat{u}dx\right]  - \int_{\mathbb{R}^{d}} K(x)f^{*}(x,\widehat{u})\widehat{u}^{2} dx\nonumber\\
& = \int_{\mathbb{R}^{d}} K(x)\left([g^+-1]f(x,\widehat{u})\widehat{u}- f^{*}(x,\widehat{u})\widehat{u}^{2} \right) dx\nonumber\\
& <0
\end{align}
According to \eqref{4eq10}, $\lambda_0=0$. Therefore, from \eqref{4eq11}, we deduce that 
$$J^{'}(\widehat{u})=0\ \text{in}\ (W^{\alpha,G}(\mathbb{R}^d))^*.$$
Hence, $\widehat{u}$ is a critical point of $J$, so, it is a weak solution of problem \eqref{P}.\\
Thus the proof.
\end{proof}
\section{Least energy nodal solution}
In this section, we establish the existence of least energy nodal solution for problem \eqref{P} and we show that the ground state solution $\widehat{u}$, obtained in Proposition \ref{4prop9}, is of fixed sign. Finally, we give the proof of Theorem \ref{4thm1}.\\

In order to find a least energy nodal solution for problem \eqref{P}, we look for a minimizer of the energy functional $J$ on the constraint  $\mathcal{M}$. Let consider the following minimization problem 
\begin{equation}\label{M1}
m_1=\inf_{\mathcal{M}}J.\tag{M-1}
\end{equation}

\begin{prop}\label{4prop11}
Assume that the hypotheses  $(H_f)$, $(H_G)$ and $(H_K)$ hold. Let $w\in W^{\alpha,G}(\mathbb{R}^{d})$ such that $w^{\pm}\neq 0,$ then there exists a unique pair $t_{w^+},s_{w^-}>0$ such that 
$$t_{w^+}w^++s_{w^-}w^-\in\mathcal{M}.$$
\end{prop}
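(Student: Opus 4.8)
The plan is to reduce the problem to a two–dimensional version of the fibering–map argument already used for the Nehari manifold in Proposition \ref{4prop2}. Fix $w\in W^{\alpha,G}(\mathbb{R}^d)$ with $w^\pm\neq 0$ and, for $(t,s)\in(0,+\infty)^2$, set $u(t,s)=tw^++sw^-$ and define
\[
\Phi(t,s)=\bigl(\langle J'(u(t,s)),tw^+\rangle,\ \langle J'(u(t,s)),sw^-\rangle\bigr).
\]
Finding the required pair is exactly solving $\Phi(t,s)=(0,0)$ with $t,s>0$. The point $t_{w^+}w^++s_{w^-}w^-$ then lies in $\mathcal{M}$ by definition of $\mathcal{M}$.

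For existence I would apply Miranda's theorem (the multidimensional Bolzano/intermediate value theorem, reference \cite{412}, as the authors announce in the introduction). One must exhibit a rectangle $[r,R]^2$ on whose faces the two components of $\Phi$ have the appropriate signs. On the ``small'' faces $t=r$ (resp.\ $s=r$), one shows the first (resp.\ second) component is positive: expand $\langle J'(u(t,s)),tw^+\rangle$ using the representation formula \eqref{18}, bound the nonlinear term via $(f_1)$–$(f_2)$ in the form \eqref{4eq81}, use Lemma \ref{4lem1} and Lemma \ref{lem444} (superadditivity of $G$, which controls the cross interaction between $w^+$ and $w^-$ so that the quadratic-type leading term dominates), and exploit $g^+<g^-_*$ exactly as in the proof of Proposition \ref{4prop2}; the key point is that the $w^-$-dependence only helps, since the nonlocal cross terms contribute positively to $\langle J'(u),tw^+\rangle$. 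On the ``large'' faces $t=R$ (resp.\ $s=R$) one shows the corresponding component is negative, using \eqref{4eq51}, Remark \ref{4rem1}, and $(f_3)$ together with Fatou's lemma as in \eqref{4eq3}, after restricting integration to a positive-measure subset of $\mathrm{supp}(w^+)$ (resp.\ $\mathrm{supp}(w^-)$). Since the estimates are uniform in the other variable on a bounded range, one may first fix $R$ large, then $r$ small, to get a single rectangle $[r,R]^2$ with the sign pattern Miranda requires, yielding an interior zero $(t_{w^+},s_{w^-})$.

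For uniqueness I would argue as in the second half of Proposition \ref{4prop2}, but now on the two components simultaneously. Suppose $(t_1,s_1)$ and $(t_2,s_2)$ both give points of $\mathcal{M}$; without loss of generality $t_1\leq t_2$ and, say, $t_2/t_1\geq s_2/s_1$ (relabel $w^+\leftrightarrow w^-$ if necessary). Normalize so that $t_1w^++s_1w^-\in\mathcal M$ and write $\sigma=t_2/t_1\geq 1$. Using the first equation $\langle J'(t_2w^++s_2w^-),t_2w^+\rangle=0$, divide through by $t_2^{g^+}$ (for the nonlocal and $g(u)u$ terms use Lemma \ref{4lem1}(2) and $\sigma\ge s_2/s_1$ to bound them, together with the positivity of the cross terms), and compare with the first equation for $(t_1,s_1)$ divided by $t_1^{g^+}$. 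The monotonicity \eqref{4eq50} of $s\mapsto f(x,s)/|s|^{g^+-1}$ forces a strict inequality of the wrong sign unless $\sigma=1$, hence $t_1=t_2$; then the second equation and the same monotonicity give $s_1=s_2$. The main obstacle, and the place requiring genuine care, is precisely this uniqueness step: unlike the one-variable case the two scalings are coupled through the nonlocal cross interactions $\int\!\!\int G\bigl((tw^+(x)-sw^-(y))/|x-y|^\alpha\bigr)$, so the homogeneity-type comparison that trivializes the local problem must be replaced by the sign analysis of these cross terms (monotone in $t$ and $s$ separately), which is exactly the ``new estimate inspired by \cite{415}'' the authors allude to in item (1) of the introduction. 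The existence step is more routine, the only subtlety being to make all sign estimates uniform so that one honest rectangle works.
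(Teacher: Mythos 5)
Your proposal follows essentially the same route as the paper: Miranda's theorem applied to the two-component map $(t,s)\mapsto(\langle J'(tw^++sw^-),tw^+\rangle,\langle J'(tw^++sw^-),sw^-\rangle)$ for existence, and the strict monotonicity \eqref{4eq50} of $s\mapsto f(x,s)/|s|^{g^+-1}$ combined with the sign/monotonicity of the nonlocal cross terms for uniqueness. The only cosmetic differences are that the paper sets up the Miranda rectangle by first working on the diagonal $(t,t)$ and then invoking the monotonicity of $\xi_1$ in $s$ and $\xi_2$ in $t$ (quoting \cite[Proof of Lemma 4.7]{47}), where you would bound the faces directly, and the paper's uniqueness normalizes $(t_1,s_1)=(1,1)$ with $t_2\le s_2$ and rules out five explicit cases, where you compress the same comparison into a single ratio argument.
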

\begin{proof}
Let $\xi:\ (0,+\infty)\times  (0,+\infty) \rightarrow \mathbb{R}^{2}$ be a continuous vector field given by
$$\xi(t,s)=\big{(}\xi_{1}(t,s),\xi_{2}(t,s)\big{)},\ \ \text{for all}\ \ t,s \in (0,+\infty)\times (0,+\infty)$$
where $$\xi_{1}(t,s)=\langle J'(tw^{+}+sw^{-}),tw^{+}\rangle\ \text{ and}\ \xi_{2}(t,s)=\langle J'(tw^{+}+sw^{-}),sw^{-}\rangle.$$
Arguing as in the proof of Proposition \ref{4prop2}, there exist $r_1>0$ small enough and $R_1>0$ large enough such that
$$\xi_1(t,t)>0,\ \ \xi_2(t,t)>0,\ \ \text{for all}\ t\in(0,r_1),$$
\begin{equation}\label{4eq90}
\xi_1(t,t)<0,\ \ \xi_2(t,t)<0,\ \ \text{for all}\ t\in(R_1,+\infty).
\end{equation}
Note that $\xi_1(t,s)$ is non-decreasing in $s$ on $(0,+\infty)$ for fixed $t>0$ and $\xi_2(t,s)$ is non-decreasing in $t$ on $(0,+\infty)$ for fixed $s>0$ (see \cite[Proof of Lemma 4.7]{47}). Then, there are $r>0$, $R>0$ with $r<R$ such that 
$$\xi_1(r,s)>0,\ \ \xi_1(R,s)<0,\ \ \text{for all}\ s\in(r,R],$$
$$\xi_2(t,r)>0,\ \ \xi_2(t,R)<0,\ \ \text{for all}\ t\in(r,R].$$
Applying the Miranda's theorem \cite{412} on $\xi$, there exist some $t_{w^+},s_{w^-} \in (r,R]$ such that $\xi_1(t_{w^+},s_{w^-})=\xi_2(t_{w^+},s_{w^-})=0$. Which means that $t_{w^+}w^++s_{w^-}w^-\in\mathcal{M}.$

For the uniqueness of the pairs $(t_{w^+},s_{w^-})$, we argue by contradiction. Suppose that there exist two different pairs $(t_1,s_1)$ and $(t_2,s_2)$ such that 
$$t_1w^++s_1w^-\in \mathcal{M}\ \text{and}\ t_2w^++s_2w^-\in \mathcal{M}.$$
We distinguish two cases :\\
\textbf{(A)}: If $w\in \mathcal{M}$. Without loss of generality, we may take $(t_1,s_1)=(1,1)$ and assume that $t_2\leq s_2$, we have 
\begin{align}\label{4eq210}
\int_{\mathbb{R}^d}K(x)f(x,w^+)w^+ dx& =A^+(w)
\end{align}
and 
\begin{align}\label{4eq211}
\int_{\mathbb{R}^d}K(x)f(x,w^-)w^- dx& =A^-(w).
\end{align}
Where 
\begin{align}\label{4eq220}
A^+(w)&=\int_{\text{supp}(w^+)}\int_{\text{supp}(w^+)}g\left( \frac{w^+(x)-w^+(y)}{\vert x-y\vert^{\alpha}}\right)\frac{w^+(x)-w^+(y)}{\vert x-y\vert^{\alpha +d}}dxdy\nonumber\\
& + \int_{\text{supp}(w^-)}\int_{\text{supp}(w^+)}g\left( \frac{w^+(x)-w^-(y)}{\vert x-y\vert^{\alpha}}\right)\frac{w^+(x)}{\vert x-y\vert^{\alpha +d}}dxdy\nonumber\\
& +\int_{\text{supp}(w^+)}\int_{\text{supp}(w^-)}g\left( \frac{w^-(x)-w^+(y)}{\vert x-y\vert^{\alpha}}\right)\frac{-w^+(y)}{\vert x-y\vert^{\alpha +d}}dxdy\nonumber\\
& +\int_{\mathbb{R}^d}g(w^+)w^+ dx
\end{align}
and 
\begin{align}\label{4eq2221}
A^-(w)&=\int_{\text{supp}(w^-)}\int_{\text{supp}(w^-)}g\left( \frac{w^-(x)-w^-(y)}{\vert x-y\vert^{\alpha}}\right)\frac{w^-(x)-w^-(y)}{\vert x-y\vert^{\alpha +d}}dxdy\nonumber\\
& + \int_{\text{supp}(w^-)}\int_{\text{supp}(w^+)}g\left( \frac{w^+(x)-w^-(y)}{\vert x-y\vert^{\alpha}}\right)\frac{-w^-(y)}{\vert x-y\vert^{\alpha +d}}dxdy\nonumber\\
& +\int_{\text{supp}(w^+)}\int_{\text{supp}(w^-)}g\left( \frac{w^-(x)-w^+(y)}{\vert x-y\vert^{\alpha}}\right)\frac{w^-(x)}{\vert x-y\vert^{\alpha +d}}dxdy\nonumber\\
& +\int_{\mathbb{R}^d}g(w^-)w^- dx.
\end{align}
Since the map $s\mapsto g(s)$ is non-decreasing on $(0,+\infty)$ and on $(-\infty,0)$ and $t_2\leq s_2$, we infer that
\begin{equation}
\left\lbrace  
\begin{array}{l}
g\left( \frac{t_2w^+(x)-s_2w^-(y)}{\vert x-y\vert^{\alpha}}\right)t_2w^+(x) \geq g\left( \frac{t_2w^+(x)-t_2w^-(y)}{\vert x-y\vert^{\alpha}}\right)t_2w^+(x),\\
\ \\
 g\left( \frac{s_2w^-(x)-t_2w^+(y)}{\vert x-y\vert^{\alpha}}\right)\left( -t_2w^+(y)\right) \geq g\left( \frac{t_2w^-(x)-t_2w^+(y)}{\vert x-y\vert^{\alpha}}\right)\left( -t_2w^+(y)\right), \\
 \ \\
g\left( \frac{t_2w^+(x)-s_2w^-(y)}{\vert x-y\vert^{\alpha}}\right)\left( -s_2w^-(y)\right) \leq g\left( \frac{s_2w^+(x)-s_2w^-(y)}{\vert x-y\vert^{\alpha}}\right)\left( -s_2w^-(y)\right), \\
\ \\
g\left( \frac{s_2w^-(x)-t_2w^+(y)}{\vert x-y\vert^{\alpha}}\right)s_2w^-(x)\leq g\left( \frac{s_2w^-(x)-s_2w^+(y)}{\vert x-y\vert^{\alpha}}\right)s_2w^-(x),\\
\end{array}
\right.
\end{equation}
for a.a. $x,y\in \mathbb{R}^d$.\\
By Lemma \ref{4lem1} and since $t_1w^++s_1w^-\in \mathcal{M},\ \text{and}\ t_2w^++s_2w^-\in \mathcal{M}$, it yields that
\begin{align}\label{4eq212}
\int_{\mathbb{R}^d}K(x)\frac{f(x,t_2w^+)t_2w^+}{\min\lbrace t_2^{g^-},t_2^{g^+}\rbrace} dx& \geq  A^+(w)
\end{align}
and 
\begin{align}\label{4eq213}
\int_{\mathbb{R}^d}K(x)\frac{f(x,s_2w^-)s_2w^-}{\max\lbrace s_2^{g^-},s_2^{g^+}\rbrace} dx& \leq A^-(w).
\end{align}
In what follows, we will show that the following five cases cannot happen:
\begin{enumerate}
\item[$(1)$] $t_2 < s_2=1.$
\item[$(2)$] $s_2> t_2=1.$
\item[$(3)$] $0<t_2\leq s_2<1.$
\item[$(4)$] $1<t_2\leq s_2.$
\item[$(5)$] $0<t_2<1< s_2.$
\end{enumerate} 
 Suppose that one of the cases $(1)$, $(3)$ or $(5)$, holds. According to \eqref{4eq50}, \eqref{4eq210}, \eqref{4eq212} and hypothesis $(K_1)$, we get  
\begin{align*}
0\leq \int_{\mathbb{R}^d}K(x)\vert w^+\vert^{g^+-1}w^+\left[\frac{f(x,t_2w^+)}{\vert t_2w^+\vert^{g^+-1}}-\frac{f(x,w^+)}{\vert w^+\vert^{g^+-1}} \right]dx <0.
\end{align*}
Thus the contradiction. Then, the cases $(1)$, $(3)$ and $(5)$ cannot be realized.\\
Suppose that case $(2)$ or $(4)$ holds. According to \eqref{4eq50}, \eqref{4eq211}, \eqref{4eq213} and hypothesis $(K_1)$, one has
\begin{align*}
0\leq \int_{\mathbb{R}^d}K(x)\vert w^-\vert^{g^+-1}w^-\left[\frac{f(x,w^-)}{\vert w^-\vert^{g^+-1}} -\frac{f(x,s_2w^-)}{\vert s_2w^-\vert^{g^+-1}}\right]dx <0.
\end{align*}
Which is a contradiction too. Then, the cases $(2)$ and $(4)$ cannot be realized.
We deduce that $(t_1,s_1)=(1,1)=(t_2,s_2)$. \\
\textbf{(B)}: If $w \notin \mathcal{M}$. Let $v=t_1w^++s_1w^-\in \mathcal{M}$, $v^+=t_1w^+$ and $v^-=s_1w^-$, so $(t_1,s_1)\neq (1,1)$. It is clear that
$$t_2w^++s_2w^-=\frac{t_2}{t_1}t_1w^++\frac{s_2}{s_1}s_1w^-=\frac{t_2}{t_1}v^++\frac{s_2}{s_1}v^-\in \mathcal{M}.$$ 
Arguing as in the case \textbf{(A)}, we conclude that 
$$\frac{t_2}{t_1}=\frac{s_2}{s_1}=1.$$ 
This completes the proof.
\end{proof}

\begin{rem}\label{4prop111}
Under the hypotheses $(H_f)$, $(H_G)$ and $(H_K)$, 
$$0<m_0=\inf_{\mathcal{N}}J\leq \inf_{\mathcal{M}}J=m_{1}.$$
\end{rem}
\begin{prop}\label{4prop16}
Assume that the hypotheses $(H_f)$, $(H_G)$ and $(H_K)$ hold. Then, for all $w\in\mathcal{M}$, 
$$J(tw^++sw^-)\leq J(w),\  \text{for all}\ t,s> 0.$$
\end{prop}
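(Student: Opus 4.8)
The plan is to imitate the one-variable argument of Propositions 4.5--4.6, but now on the two-parameter fibering map $\Psi(t,s) := J(tw^{+}+sw^{-})$ for $w\in\mathcal M$. First I would record the splitting analogous to the identity used in Lemma 4.9: for any $t,s>0$,
\begin{align*}
\Psi(t,s) &= J(tw^{+}) + J(sw^{-}) \\
&\quad + \int_{\mathrm{supp}(w^{-})}\int_{\mathrm{supp}(w^{+})}G\!\left(\frac{tw^{+}(x)-sw^{-}(y)}{|x-y|^{\alpha}}\right)\frac{dxdy}{|x-y|^{d}} \\
&\quad + \int_{\mathrm{supp}(w^{+})}\int_{\mathrm{supp}(w^{-})}G\!\left(\frac{sw^{-}(x)-tw^{+}(y)}{|x-y|^{\alpha}}\right)\frac{dxdy}{|x-y|^{d}}.
\end{align*}
The goal is to show that $\Psi$ attains its global maximum over $(0,+\infty)^2$ at $(t,s)=(1,1)$. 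The growth estimates from Proposition 4.5 (using $(f_1)$--$(f_3)$, $(K_1)$ and Lemma 4.1) give $\Psi(t,s)\to -\infty$ as $t+s\to\infty$ along any ray, and $\Psi$ is positive near a small sphere in the positive quadrant; moreover $\Psi$ is $C^1$ on $(0,+\infty)^2$, so a global maximizer $(t_0,s_0)\in(0,+\infty)^2$ exists. At that maximizer $\partial_t\Psi(t_0,s_0)=\partial_s\Psi(t_0,s_0)=0$, i.e.
$$
\langle J'(t_0w^{+}+s_0w^{-}),\,t_0w^{+}\rangle = \langle J'(t_0w^{+}+s_0w^{-}),\,s_0w^{-}\rangle = 0,
$$
so $t_0w^{+}+s_0w^{-}\in\mathcal M$. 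By the uniqueness part of Proposition 6.2 applied to $w\in\mathcal M$ (the case $(t_1,s_1)=(1,1)$ there), we must have $(t_0,s_0)=(1,1)$. Hence $\Psi(t,s)\le\Psi(1,1)=J(w)$ for all $t,s>0$, which is the claim.

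The main obstacle I expect is verifying carefully that the global maximum of $\Psi$ over the open quadrant is not escaping to the boundary $\{t=0\}\cup\{s=0\}$ or to infinity. Decay to $-\infty$ at infinity is handled exactly as in \eqref{4eq4}--\eqref{4eq3}: choose $A\subset\mathrm{supp}(w^{+})$ and $A'\subset\mathrm{supp}(w^{-})$ of positive measure, and use $(f_3)$ together with Fatou's lemma to make $\int_A K(x)F(x,tw^{+})|w^{+}|^{-g^+}\cdots\,dx$ and the analogous $w^{-}$ term dominate the (at most polynomial of degree $g^+$) contribution of the $G$-terms; the cross terms involving $G\big((tw^{+}(x)-sw^{-}(y))/|x-y|^{\alpha}\big)$ are nonnegative and also controlled by Lemma 4.1 by $\max\{t,s\}^{g^+}$ times a finite constant, so they do not spoil the argument. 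Near the axes, if (say) $s\to 0^{+}$ with $t$ bounded, then one reduces to the one-variable fibering map for $w^{+}$ and invokes \eqref{4eq2}-type positivity from Proposition 4.6 to see that $\Psi$ stays away from being maximal there; alternatively one simply notes $\Psi>0$ on a small quarter-sphere while $\Psi(1,1)=J(w)\ge m_1>0$ need not be used — it suffices that a maximizer exists in the closed quadrant and the boundary/infinity values are strictly smaller, forcing it into the interior.

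A second, more technical point is the monotonicity of the cross terms in $(t,s)$ which is what ultimately feeds into the uniqueness step of Proposition 6.2: since $s\mapsto g(s)$ is nondecreasing on $(0,+\infty)$ and on $(-\infty,0)$, the sign relations displayed in the proof of Proposition 6.2 (the four inequalities for $g\big((t_2w^{+}-s_2w^{-})/|x-y|^{\alpha}\big)$ etc.) hold, and combined with \eqref{4eq50} and $(K_1)$ they rule out every configuration $t_0\ne 1$ or $s_0\ne 1$ exactly as in the five cases $(1)$--$(5)$ there. I would therefore not reprove uniqueness but cite Proposition 6.2 directly. Putting the existence of an interior maximizer together with that uniqueness gives $(t_0,s_0)=(1,1)$ and completes the proof.
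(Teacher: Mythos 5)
Your proposal is correct and follows essentially the same route as the paper: introduce the two-parameter fibering map $\mu_w(t,s)=J(tw^{+}+sw^{-})$, show it decays to $-\infty$ at infinity via $(f_3)$ and Fatou, obtain an interior global maximizer whose criticality puts $t_0w^{+}+s_0w^{-}$ in $\mathcal M$, and then invoke the uniqueness part of the preceding proposition (the existence/uniqueness of the pair for elements with $w^{\pm}\neq 0$) to conclude $(t_0,s_0)=(1,1)$. The only real difference is one of care rather than method: you explicitly flag that one must rule out a maximizer on the axes $\{t=0\}\cup\{s=0\}$, a point the paper silently skips, whereas the paper only records $\mu_w(0,0)=0<m_1\le\mu_w(1,1)$ and the decay at infinity. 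Your sketch for closing that gap is a little loose as stated; the cleanest way to rule out the boundary is to observe that at a point $(t,0)$ with $t>0$ the local terms in $\partial_s\mu_w(t,0)=\langle J'(tw^{+}),w^{-}\rangle$ vanish (because $w^{+}$ and $w^{-}$ have disjoint supports and $f(x,0)=0$), while the non-local cross term is strictly positive since $g$ is odd and increasing, so $\mu_w$ strictly increases as one moves off that edge into the interior; the same holds on $\{s=0\}$. With that one-line computation your argument is complete and matches the paper's.
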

\begin{proof}
Let $w\in\mathcal{M}$ and consider the fibering map $\mu_w:(0,+\infty)\times (0,+\infty)\longrightarrow \mathbb{R}$ defined by 
$$\mu_w(t,s)=J(tw^++sw^-)\ \ \text{for all}\ t,s> 0.$$
In light of Proposition \ref{4prop11},
\begin{equation}\label{4eq99}
\mu_w(0,0)=J(0)=0<m_1\leq \mu_w(1,1)=J(w).
\end{equation}
Let $t,s>0$ large enough, using Lemmas \ref{4lem1} and \ref{4lem2}, we obtain
\begin{align}\label{4eq18}
\mu_w(t,s)&\leq \max\left\lbrace \Vert tw^++sw^-\Vert^{g^-},\Vert tw^++sw^-\Vert^{g^+}\right\rbrace -\int_{\mathbb{R}^d} K(x)F(x,tw^++sw^-)dx\nonumber\\
& \leq 2^{g^+-1}\max\left\lbrace \vert t \vert^{g^-}\Vert w^+\Vert^{g^-}+\vert s \vert^{g^-}\Vert w^-\Vert^{g^-},\vert t \vert^{g^+}\Vert w^+\Vert^{g^+}+\vert s \vert^{g^+}\Vert w^-\Vert^{g^+}\right\rbrace\nonumber\\
& -\int_{\mathbb{R}^d} K(x)F(x,tw^++sw^-)dx\nonumber\\
& \leq 2^{g^+-1}\max\left\lbrace\max\lbrace\vert t \vert^{g^-},\vert s \vert^{g^-}\rbrace\left( \Vert w^+\Vert^{g^-}+\Vert w^-\Vert^{g^-}\right),\max\lbrace\vert t \vert^{g^+},\vert s \vert^{g^+}\rbrace\left( \Vert w^+\Vert^{g^+}+\Vert w^-\Vert^{g^+}\right) \right\rbrace\nonumber\\
& -\int_{\mathbb{R}^d} K(x)F(x,tw^++sw^-)dx\nonumber\\
&\leq 2^{g^+-1}\max\lbrace\vert t \vert^{g^+},\vert s \vert^{g^+}\rbrace\max\left\lbrace \Vert w^+\Vert^{g^-}+\Vert w^-\Vert^{g^-}, \Vert w^+\Vert^{g^+}+\Vert w^-\Vert^{g^+} \right\rbrace\nonumber\\
& -\int_{\mathbb{R}^d} K(x)F(x,tw^++sw^-)dx.
\end{align}
It follows that 
\begin{align}\label{4eq19}
\frac{\mu_w(t,s)}{\max\lbrace\vert t \vert^{g^+},\vert s \vert^{g^+}\rbrace}&\leq 2^{g^+-1}\max\left\lbrace \Vert w^+\Vert^{g^-}+\Vert w^-\Vert^{g^-}, \Vert w^+\Vert^{g^+}+\Vert w^-\Vert^{g^+} \right\rbrace\nonumber\\ &-\int_{\mathbb{R}^d} K(x)\frac{F(x,tw^++sw^-)}{\max\lbrace\vert t \vert^{g^+},\vert s \vert^{g^+}\rbrace}dx.
\end{align}
By assumption $(f_3)$ and the fact that $\text{supp}(w^+)\cap\text{supp}(w^-)=\emptyset$, we infer that 
\begin{equation}\label{4eq20}
\lim\limits_{\vert (t,s)\vert\rightarrow +\infty}\frac{F(x,tw^++sw^-)}{\max\lbrace\vert t \vert^{g^+},\vert s \vert^{g^+}\rbrace}=+\infty,\ \ \text{for a.a.}\ x\in\mathbb{R}^d.
\end{equation}
Applying \eqref{4eq19} and \eqref{4eq20}, we deduce that
 $$\limsup\limits_{\vert (t,s)\vert\rightarrow +\infty} \mu_w(t,s)\leq -\infty.$$
According to \eqref{4eq99}, the map $\mu_w(\cdot,\cdot)$ has  a global maximum $(t_{w^+},s_{w^-})\in (0,+\infty)\times (0,+\infty)$.  $(t_{w^+},s_{w^-})$ is a critical point for $\mu_w(\cdot,\cdot)$, that is, 
 
$$\langle J^{'}(t_{w^+}w^++s_{w^-}w^-),w^+\rangle=0$$
and 
$$\langle J^{'}(t_{w^+}w^++s_{w^-}w^-),w^-\rangle=0.$$
By  Proposition \ref{4prop16} and the fact that $w\in \mathcal{M}$,
$$(t_{w^+},s_{w^-})=(1,1).$$
Hence,
$$J(tw^++sw^-)\leq J(t_{w^+}w^++s_{w^-}w^-)= J(w),\ \text{for all}\ t,s>0.$$
This ends the proof.
\end{proof}
\begin{prop}\label{4prop12} Assume that hypotheses $(H_f)$, $(H_G)$ and $(H_K)$ hold.
Let $\lbrace w_{n}\rbrace _{n} \subset \mathcal{M}$ such that $w_{n}\rightharpoonup w $ in $W^{\alpha,G}(\mathbb{R}^{d})$, then $w^{\pm}\neq 0.$
\end{prop}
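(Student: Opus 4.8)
The plan is to exploit the ``Nehari-type'' inequality available on $\mathcal M$ together with the compactness encoded in $(H_K)$ through Lemma \ref{4lem4}. Since $w_n\rightharpoonup w$, the sequence $\{w_n\}$ is bounded in $W^{\alpha,G}(\mathbb R^d)$; moreover, because $|w_n^\pm(x)|\le|w_n(x)|$ and $|w_n^\pm(x)-w_n^\pm(y)|\le|w_n(x)-w_n(y)|$ for a.a. $x,y\in\mathbb R^d$, the truncations $\{w_n^+\}$ and $\{w_n^-\}$ are bounded as well. The argument being symmetric in $w^+$ and $w^-$, I will only explain why $w^+\neq 0$.

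\emph{Step 1: a uniform lower bound $\Vert w_n^+\Vert\ge\delta>0$.} Since $w_n\in\mathcal M$, Lemma \ref{4lem5} gives $\langle J'(w_n^+),w_n^+\rangle\le\langle J'(w_n),w_n^+\rangle=0$. Expanding $\langle J'(w_n^+),w_n^+\rangle$ and using $g(t)t\ge g^-G(t)$ (from $(g_2)$) on the Gagliardo double-integral term and on $\int_{\mathbb R^d}g(w_n^+)w_n^+\,dx$, one obtains
\[
g^-\,\rho(\alpha;w_n^+)\ \le\ \int_{\mathbb R^d}K(x)f(x,w_n^+)w_n^+\,dx .
\]
I then estimate the right-hand side from above via \eqref{4eq81}, the inequalities $g(t)t\le g^+G(t)$ and $g_*(t)t\le g^+_*G_*(t)$ (from \eqref{2.7}), the trivial bound $\tilde\rho(w_n^+)\le\rho(\alpha;w_n^+)$, and the continuous embedding $W^{\alpha,G}(\mathbb R^d)\hookrightarrow L^{G_*}(\mathbb R^d)$ combined with Lemma \ref{4lem2}. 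Fixing $\varepsilon\in(0,1/\Vert K\Vert_\infty)$ small enough that $\varepsilon\Vert K\Vert_\infty g^+<g^-$ lets me absorb the $\rho(\alpha;w_n^+)$-term that appears on the right, leaving an estimate of the shape $c\,\rho(\alpha;w_n^+)\le C\max\{\Vert w_n^+\Vert^{g^-_*},\Vert w_n^+\Vert^{g^+_*}\}$ with $c,C>0$. Since $g^+<g^-_*$ and $w_n^+\neq 0$, the usual case distinction ($\Vert w_n^+\Vert$ small versus not small), exactly as in Proposition \ref{4prop44}, forces $\Vert w_n^+\Vert\ge\delta$ for some $\delta>0$ independent of $n$.

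\emph{Step 2: the contradiction.} Suppose $w^+=0$. Applying Lemma \ref{4lem4}(3) to the sequence $w_n\rightharpoonup w$ and using $f(x,0)=0$ yields
\[
\int_{\mathbb R^d}K(x)f(x,w_n^+)w_n^+\,dx\ \longrightarrow\ \int_{\mathbb R^d}K(x)f(x,w^+)w^+\,dx=0 .
\]
Plugging this into the inequality $g^-\rho(\alpha;w_n^+)\le\int_{\mathbb R^d}K(x)f(x,w_n^+)w_n^+\,dx$ from Step 1 forces $\rho(\alpha;w_n^+)\to 0$, whence $\Vert w_n^+\Vert\to 0$ by Lemma \ref{4lem2}(4); this contradicts $\Vert w_n^+\Vert\ge\delta>0$. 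Therefore $w^+\neq 0$, and repeating the argument with $w_n^-$ in place of $w_n^+$ gives $w^-\neq 0$.

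The step I expect to be the main obstacle is Step 1: carrying out the absorption cleanly requires keeping careful track of the distinction between the Luxemburg norms and the modulars $\rho(\alpha;\cdot)$, $\tilde\rho(\cdot)$, $\overline\rho(\alpha;\cdot)$, of the two regimes in Lemma \ref{4lem2}, and of the fact that the continuous embedding $W^{\alpha,G}(\mathbb R^d)\hookrightarrow L^{G_*}(\mathbb R^d)$ is precisely what hypothesis $(g_4)$ secures. Once the uniform lower bound is in hand, Step 2 is immediate from Lemmas \ref{4lem4} and \ref{4lem2}.
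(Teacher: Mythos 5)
Your proposal is correct and follows essentially the same route as the paper: both derive a uniform lower bound $\Vert w_n^\pm\Vert\ge\delta>0$ from Lemma \ref{4lem5}, the growth estimate \eqref{4eq81}, $(g_2)$, \eqref{2.7}, Lemma \ref{4lem2} and the exponent gap $g^+<g^-_*$, and then invoke the compactness Lemma \ref{4lem4}(3) to pass to the limit. The only cosmetic difference is that you phrase Step 2 as a contradiction, while the paper concludes directly by taking the limit in $g^-\min\{\varrho^{g^-},\varrho^{g^+}\}\le\int_{\mathbb R^d}K(x)f(x,w_n^\pm)w_n^\pm\,dx$.
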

\begin{proof}[Proof] We claim that there is $\varrho>0$ such that 
\begin{equation}\label{4eq7845}
\varrho\leq \Vert v^\pm\Vert,\ \ \text{for all}\ v\in \mathcal{M}.
\end{equation}
Indeed, by using Lemma \ref{4lem5},
\begin{align*}
& \int_{\mathbb{R}^{d}}\int_{\mathbb{R}^{d}}g\left( \frac{v^{\pm}(x)-v^{\pm}(y)}{\vert x-y\vert^{\alpha}}\right)\frac{v^{\pm}(x)-v^{\pm}(y)}{\vert x-y\vert^{\alpha +d}}dxdy+\int_{\mathbb{R}^{d}}g(v^{\pm})v^{\pm}dx \leq\int_{\mathbb{R}^{d}}K(x)f(x,v^{\pm})v^{\pm}dx.
\end{align*}
Exploiting \eqref{2.7}, \eqref{4eq81}, hypotheses $(g_2)$ and $(K_1)$, for all $\varepsilon>0$ we get
\begin{align*}
&[ g^--g^+\varepsilon\Vert K\Vert_{\infty} ]\left[ \int_{\mathbb{R}^{d}}\int_{\mathbb{R}^{d}}G\left( \frac{v^\pm(x)-v^\pm(y)}{\vert x-y\vert^{\alpha}}\right)\frac{dxdy}{\vert x-y\vert^{d}}dxdy+ \int_{\mathbb{R}^{d}}G(v^\pm)dx \right]\\
& \leq g^+_{*}C_{\varepsilon}\Vert K\Vert_{\infty}\int_{\mathbb{R}^{d}}G_{*}(v^\pm)dx.
\end{align*}
Without lose of generality, we may assume that
 $0\neq \Vert v\Vert <1$. By Lemma \ref{4lem2} and Theorem \ref{4thm2}, we deduce that
\begin{equation}\label{600}
[ g^--g^+\varepsilon\Vert K\Vert_{\infty} ]\Vert v^\pm\Vert^{g^+}\leq g^+_{*}\tilde{C}C_{\varepsilon}\Vert K\Vert_{\infty}\Vert v^\pm\Vert^{g^+_{*}}.
\end{equation}
Choosing $\varepsilon$ small enough, we conclude that
\begin{align*}
\left( \frac{C_{1}}{C_{2}}\right)^{\frac{1}{g^+_{*}-g^+}}& \leq \Vert v^\pm\Vert,
\end{align*}
where $C_{1}=g^--g^+\varepsilon\Vert K\Vert_{\infty}>0$ and
$C_{2}=g^+_{*}\tilde{C}C_{\varepsilon}\Vert K\Vert_{\infty}>0.$
  Consequently, there exists a positive radius $\varrho>0$ such that $\Vert v^\pm\Vert\geq \varrho,$ with $\varrho=\left( \frac{C_{1}}{C_{2}}\right)^{\frac{1}{g^+_{*}-g^+}}.$ Thus, the claim.
  
So, by \eqref{4eq7845}, 
\begin{equation}\label{3.7}
\Vert w_{n}^{\pm}\Vert \geq\varrho,\ \ \text{for all}\ n\in\mathbb{N}.
\end{equation}
According to Lemma \ref{4lem5}, 
$$\langle J^{'}(w_{n}^{\pm}),w_{n}^{\pm} \rangle \leq\langle J^{'}(w_{n}),w_{n}^{\pm} \rangle =0.$$
 By $(g_2)$ and Lemma \ref{4lem2}, we get
\begin{equation}\label{3.6}
 g^-\min\lbrace \Vert w_{n}^{\pm}\Vert^{g^-},\Vert w_{n}^{\pm}\Vert^{g^+}\rbrace\leq \int_{\mathbb{R}^{d}}K(x)f(x,w_{n}^{\pm})w_{n}^{\pm}\ dx.
 \end{equation}
 Putting together $(\ref{3.7})$ and $(\ref{3.6}),$ we find
 \begin{equation}\label{3.8}
  g^-\min\lbrace  \varrho^{g^-}, \varrho^{g^+}\rbrace\leq g^-\min\lbrace \Vert w_{n}^{\pm}\Vert^{g^-},\Vert w_{n}^{\pm}\Vert^{g^+}\rbrace\leq \int_{\mathbb{R}^{d}}K(x)f(x,w_{n}^{\pm})w_{n}^{\pm}\ dx.
 \end{equation}
 On the other hand, in light of Lemma \ref{4lem4}, one has
 \begin{equation}\label{3.9}
 \lim_{n\rightarrow+\infty}\int_{\mathbb{R}^{d}}K(x)f(x,w_{n}^{\pm})w_{n}^{\pm}\ dx= \int_{\mathbb{R}^{d}}K(x)f(x,w^{\pm})w^{\pm}\ dx.
 \end{equation}
Combining $(\ref{3.8})$ with $(\ref{3.9}),$ we get
 $$0< g^-\min\lbrace  \varrho^{g^-}, \varrho^{g^+}\rbrace\leq \int_{\mathbb{R}^{d}}K(x)f(x,w^{\pm})w^{\pm}\ dx,$$
 thus, $w^{\pm}\neq 0$. This ends the proof.
\end{proof}

In the following proposition, we prove that the infimum of $J$ is attained on $\mathcal{M}$.
\begin{prop}\label{4prop15}
Assume that the hypotheses $(H_f)$, $(H_G)$ and $(H_K)$ hold. Then, there exists $\widehat{w}\in\mathcal{M}$ such that $J(\widehat{w})=m_1$.
\end{prop}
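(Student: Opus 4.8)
The plan is to repeat, on the constraint $\mathcal{M}$, the minimization argument already carried out for $m_0$ in Proposition \ref{4prop5}, replacing the one–parameter fibering of Proposition \ref{4prop3} by the two–parameter one of Proposition \ref{4prop16}, and using Propositions \ref{4prop11} and \ref{4prop12} to stay inside $\mathcal{M}$. Concretely, I would fix a minimizing sequence $\{w_n\}_{n}\subset\mathcal{M}$ with $J(w_n)\to m_1$; recall $\mathcal{M}\neq\emptyset$ by Proposition \ref{4prop11} and $0<m_0\le m_1<+\infty$ by Remark \ref{4prop111}, so $m_1$ is a genuine positive finite number.

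\emph{Step 1 (boundedness of $\{w_n\}$), which I expect to be the main obstacle.} Since the Ambrosetti--Rabinowitz condition is not assumed, I would argue by contradiction following the scheme of Proposition \ref{4prop5}. If $\|w_n\|\to+\infty$, set $v_n=w_n/\|w_n\|$ and pass to a subsequence with $v_n\rightharpoonup v$ in $W^{\alpha,G}(\mathbb{R}^d)$ and $v_n\to v$ a.e. First, $v\neq 0$: testing Proposition \ref{4prop16} with $t=s=R/\|w_n\|$ gives $J(Rv_n)\le J(w_n)$ for every $R>0$; since $\|Rv_n\|=R$, Lemma \ref{4lem2} yields $J(Rv_n)\ge\min\{R^{g^-},R^{g^+}\}-\int_{\mathbb{R}^d}K(x)F(x,Rv_n)\,dx$, and if $v=0$ then $Rv_n\rightharpoonup 0$ and $\int K(x)F(x,Rv_n)\,dx\to 0$ by Lemma \ref{4lem4} (using $F(x,0)=0$), whence $m_1\ge\min\{R^{g^-},R^{g^+}\}$ for all $R>0$, which is absurd. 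With $v\neq 0$ in hand I would copy the second half of the proof of Proposition \ref{4prop5}: the bound $J(w_n)\le C\max\{\|w_n\|^{g^-},\|w_n\|^{g^+}\}-\int_{\mathbb{R}^d}K(x)F(x,\|w_n\|v_n)\,dx$, divided by $\|w_n\|^{g^+}$ and combined with $(f_3)$, $(K_1)$ and Fatou's lemma on the set $\{v\neq 0\}$, forces $J(w_n)/\|w_n\|^{g^+}\to-\infty$, contradicting $J(w_n)\to m_1$. Hence $\{w_n\}$ is bounded.

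\emph{Step 2 (extraction and projection onto $\mathcal{M}$).} Up to a subsequence, $w_n\rightharpoonup w_0$ in $W^{\alpha,G}(\mathbb{R}^d)$ and $w_n\to w_0$ a.e.; since $|w_n^\pm(x)-w_n^\pm(y)|\le|w_n(x)-w_n(y)|$ and $|w_n^\pm|\le|w_n|$ pointwise, the parts $\{w_n^\pm\}$ are bounded and $w_n^\pm\rightharpoonup w_0^\pm$. By Proposition \ref{4prop12}, $w_0^\pm\neq 0$, so Proposition \ref{4prop11} produces a unique pair $t_\star>0,\ s_\star>0$ with $\widehat w:=t_\star w_0^+ + s_\star w_0^-\in\mathcal{M}$, and therefore $J(\widehat w)\ge m_1$.

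\emph{Step 3 (lower semicontinuity and conclusion).} Put $u_n:=t_\star w_n^+ + s_\star w_n^-\rightharpoonup\widehat w$. The modular $u\mapsto\rho(\alpha;u)$ is convex and strongly continuous, hence weakly lower semicontinuous, while $\int_{\mathbb{R}^d}K(x)F(x,u_n)\,dx\to\int_{\mathbb{R}^d}K(x)F(x,\widehat w)\,dx$ by Lemma \ref{4lem4}; consequently $J(\widehat w)\le\liminf_n J(u_n)$. (Equivalently, one may pass to the limit along $J-\tfrac{1}{g^+}\langle J',\cdot\rangle$ using Lemma \ref{4lem3}, \eqref{4eq51} and Fatou's lemma, exactly as in Proposition \ref{4prop5}.) Since $w_n\in\mathcal{M}$, Proposition \ref{4prop16} gives $J(u_n)\le J(w_n)$, so $\liminf_n J(u_n)\le\lim_n J(w_n)=m_1$; combined with $J(\widehat w)\ge m_1$ this yields $J(\widehat w)=m_1$ with $\widehat w\in\mathcal{M}$, which proves the statement. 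If one additionally wishes $w_0$ itself to realize $m_1$ (i.e.\ $t_\star=s_\star=1$), this follows from the same five–case comparison used in Proposition \ref{4prop11}, via \eqref{4eq50} and \eqref{4eq51}, but it is not needed here.
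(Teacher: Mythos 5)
Your argument is correct and follows the same basic scheme as the paper: boundedness of a minimizing sequence via the two--parameter fibering of Proposition~\ref{4prop16} and the superlinearity condition $(f_3)$, passage to a weak limit $w_0$ with $w_0^\pm\neq 0$ by Proposition~\ref{4prop12}, projection $t_\star w_0^+ + s_\star w_0^-$ onto $\mathcal{M}$ by Proposition~\ref{4prop11}, and a squeeze $m_1\le J(t_\star w_0^+ + s_\star w_0^-)\le\liminf_n J(t_\star w_n^+ + s_\star w_n^-)\le \lim_n J(w_n)=m_1$ via Lemma~\ref{4lem4} and weak lower semicontinuity of the modular. The one genuine divergence is your closing remark: you declare $\widehat w := t_\star w_0^+ + s_\star w_0^-$ the minimizer and stop, whereas the paper spends an additional two steps (first $0<t_\star,s_\star\le 1$ by the $\vert s\vert^{g^+-1}$-normalized comparison from \eqref{4eq50}, then a strict-monotonicity contradiction using $J-\tfrac1{g^+}\langle J',\cdot\rangle$, Lemma~\ref{4lem3} and \eqref{4eq51}) to force $t_\star=s_\star=1$ so that the minimizer is the weak limit itself. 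You are right that this extra work is not needed for the statement as formulated: Proposition~\ref{4prop14} only requires a minimizer on $\mathcal{M}$, not that the minimizer coincide with the weak limit of the chosen sequence, so your shortcut buys a cleaner and shorter proof of exactly the claimed assertion while giving up the (unused) identification of $\widehat w$ with $w_0$. One small point to make fully explicit is the weak convergence $w_n^\pm\rightharpoonup w_0^\pm$: you justify it by the pointwise domination $|w_n^\pm(x)-w_n^\pm(y)|\le|w_n(x)-w_n(y)|$ and $|w_n^\pm|\le|w_n|$, which gives boundedness of $\{w_n^\pm\}$, and then identification of the weak limit with the a.e. limit $w_0^\pm$ along a subsequence; that is correct but deserves one sentence since Lemma~\ref{4lem4} is applied to the sequence $t_\star w_n^++s_\star w_n^-$ and needs weak convergence of that sequence to $\widehat w$, not merely a.e. convergence.
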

\begin{proof}
Let $\lbrace w_n\rbrace_{n\in\mathbb{N}}\subset \mathcal{M}$ such that 
$$J(w_n)\mathop{\longrightarrow}_{n\rightarrow +\infty} m_1.$$
Arguing as in the proof of Proposition \ref{4prop5}, we deduce that $\lbrace w_n\rbrace_{n\in\mathbb{N}}$ is  bounded in $W^{\alpha,G}(\mathbb{R}^d)$.\\
By passing to a subsequence if necessary,
$$w_n\rightharpoonup \widehat{w}\ \text{in}\ W^{\alpha,G}(\mathbb{R}^d),$$
\begin{equation}\label{4eq22}
w_n(x)\rightarrow \widehat{w}(x)\ \text{as}\ n\rightarrow +\infty,\  \text{for a.a.}\ x\in\mathbb{R}^d
\end{equation}
and
\begin{equation}\label{4eq221}
w_n^\pm(x)\rightarrow \widehat{w}^\pm(x)\ \text{as}\ n\rightarrow +\infty,\  \text{for a.a.}\ x\in\mathbb{R}^d.
\end{equation}
Applying Proposition \ref{4prop12}, we see that
$\widehat{w}^\pm \neq 0.$ So, according to Proposition \ref{4prop11}, there is a unique pair $t_{\widehat{w}^+},s_{\widehat{w}^-}>0$ such that $t_{\widehat{w}^+}\widehat{w}^{+}+s_{\widehat{w}^-}\widehat{w}^{-}\in \mathcal{M}$, that is,
\begin{equation}\label{4eq13}
\langle J^{'}(t_{\widehat{w}^+}\widehat{w}^{+}+s_{\widehat{w}^-}\widehat{w}^{-}),\widehat{w}^+\rangle=0\ \text{and}\ \langle J^{'}(t_{\widehat{w}^+}\widehat{w}^{+}+s_{\widehat{w}^-}\widehat{w}^{-}),\widehat{w}^-\rangle=0.
\end{equation}
Since $\lbrace w_n\rbrace_{n\in\mathbb{N}}\subset \mathcal{M}$, by \eqref{4eq221}, Proposition \ref{4prop16}, Lemma \ref{4lem4} and Fatou's lemma, we obtain
\begin{align}\label{4eq23}
m_1=\lim_{n\rightarrow +\infty}J(w_n)& \geq \liminf_{n\rightarrow +\infty}J(t_{\widehat{w}^+}w^+_n+s_{\widehat{w}^-}w^-_n)\nonumber\\
& \geq J(t_{\widehat{w}^+}\widehat{w}^++s_{\widehat{w}^-}\widehat{w}^-)\nonumber\\
& \geq m_1.
\end{align}
Therefore,
\begin{equation}\label{4eq24}
 m_1=\inf\limits_{\mathcal{M}}J=J(t_{\widehat{w}^+}\widehat{w}^++s_{\widehat{w}^-}\widehat{w}^-).
\end{equation}

 We show that $t_{\widehat{w}^+}=s_{\widehat{w}^-}=1$ and we produced it in two steps.\\
 
 Step 1: $0<t_{\widehat{w}^+},s_{\widehat{w}^-}\leq 1$. Indeed, using \eqref{4eq221}, Lemma \ref{4lem4} and Fatou's lemma, we find that
 \begin{align}\label{4eq150}
 \int_{\mathbb{R}^d}\int_{\mathbb{R}^d}g\left( \frac{\widehat{w}(x)-\widehat{w}(y)}{\vert x-y\vert^\alpha}\right) \frac{\widehat{w}^\pm(x)-\widehat{w}^\pm(y)}{\vert x-y\vert^{\alpha+d}}dxdy+\int_{\mathbb{R}^d}g(\widehat{w}^\pm)\widehat{w}^\pm dx\leq \int_{\mathbb{R}^d}K(x)f(x,\widehat{w}^\pm)\widehat{w}^\pm dx.
 \end{align}
From \eqref{4eq13}, we have
 \begin{align}\label{4eq200}
 & \int_{\mathbb{R}^{d}}K(x)f(x,t_{\widehat{w}^+}\widehat{w}^{+})t_{\widehat{w}^+}\widehat{w}^{+}\ dx \nonumber\\ 
 & =\int_{\text{supp}(\widehat{w}^+)}\int_{\text{supp}(\widehat{w}^+)}g\left( \frac{t_{\widehat{w}^+}\widehat{w}^{+}(x)-t_{\widehat{w}^+}\widehat{w}^{+}(y)}{\vert x-y\vert^{\alpha}}\right) \frac{t_{\widehat{w}^+}\widehat{w}^{+}(x)-t_{\widehat{w}^+}\widehat{w}^{+}(y)}{\vert x-y\vert^{\alpha +d}}dxdy\nonumber\\
 & + \int_{\text{supp}(\widehat{w}^-)}\int_{\text{supp}(\widehat{w}^+)}g\left( \frac{t_{\widehat{w}^+}\widehat{w}^{+}(x)-s_{\widehat{w}^-}\widehat{w}^{-}(y)}{\vert x-y\vert^{\alpha}}\right) \frac{t_{\widehat{w}^+}\widehat{w}^{+}(x)}{\vert x-y\vert^{\alpha +d}}dxdy\nonumber\\
 & +\int_{\text{supp}(\widehat{w}^+)}\int_{\text{supp}(\widehat{w}^-)}g\left( \frac{s_{\widehat{w}^-}\widehat{w}^{-}(x)-t_{\widehat{w}^+}\widehat{w}^{+}(y)}{\vert x-y\vert^{\alpha}}\right) \frac{-t_{\widehat{w}^+}\widehat{w}^{+}(y)}{\vert x-y\vert^{\alpha +d}}dxdy\nonumber\\
 & + \int_{\mathbb{R}^{d}}g(t_{\widehat{w}^+}\widehat{w}^{+})t_{\widehat{w}^+}\widehat{w}^{+}\ dx.
 \end{align}
Without loss of generality, we suppose that $t_{\widehat{w}^+}\geq s_{\widehat{w}^-}$. By \eqref{4eq200}, Lemma \ref{4lem1} and the fact that the map $s\mapsto g(s)$ is non-decreasing function on $\mathbb{R}$, it yields that
 \begin{align*}
 &\int_{\mathbb{R}^{d}}K(x)f(x,t_{\widehat{w}^+}\widehat{w}^{+})t_{\widehat{w}^+}\widehat{w}^{+}\ dx
  \leq \max\lbrace t_{\widehat{w}^+}^{g^-},t_{\widehat{w}^+}^{g^+}\rbrace \sigma,
 \end{align*}
 where
 \begin{align*}
 0\leq\sigma & =\int_{\text{supp}(\widehat{w}^+)}\int_{\text{supp}(\widehat{w}^+)}g\left( \frac{\widehat{w}^{+}(x)-\widehat{w}^{+}(y)}{\vert x-y\vert^{\alpha}}\right) \frac{\widehat{w}^{+}(x)-\widehat{w}^{+}(y)}{\vert x-y\vert^{\alpha +d}}dxdy\\
 & +\int_{\text{supp}(\widehat{w}^-)}\int_{\text{supp}(\widehat{w}^+)}g\left( \frac{\widehat{w}^{+}(x)-\widehat{w}^{-}(y)}{\vert x-y\vert^{\alpha}}\right) \frac{\widehat{w}^{+}(x)}{\vert x-y\vert^{\alpha +d}}dxdy\\
 & +\int_{\text{supp}(\widehat{w}^+)}\int_{\text{supp}(\widehat{w}^-)}g\left( \frac{\widehat{w}^{-}(x)-\widehat{w}^{+}(y)}{\vert x-y\vert^{\alpha}}\right) \frac{-\widehat{w}^{+}(y)}{\vert x-y\vert^{\alpha +d}}dxdy\\
 & + \int_{\mathbb{R}^{d}}g(\widehat{w}^{+})\widehat{w}^{+}\ dx\\
 &= \int_{\mathbb{R}^{d}}\int_{\mathbb{R}^{d}}g\left( \frac{\widehat{w}(x)-\widehat{w}(y)}{\vert x-y\vert^{\alpha}}\right) \frac{\widehat{w}^{+}(x)-\widehat{w}^{+}(y)}{\vert x-y\vert^{\alpha +d}}dxdy\\
 & + \int_{\mathbb{R}^{d}}g(\widehat{w}^{+})\widehat{w}^{+}\ dx.
 \end{align*}
Arguing by contradiction, and suppose that $t_{\widehat{w}^+}>1,$ then
 \begin{equation}\label{100}
 \int_{\mathbb{R}^{d}}K(x)\frac{f(x,t_{\widehat{w}^+}\widehat{w}^{+})t_{\widehat{w}^+}\widehat{w}^{+}}{t_{\widehat{w}^+}^{g^+}}\leq  \sigma.
 \end{equation}
 Putting together \eqref{4eq150} and \eqref{100}, using $(K_1)$ and \eqref{4eq50}, we obtain
 \begin{align*}
0  & \leq \int_{\mathbb{R}^{d}}K(x)(\widehat{w}^{+})^{g^+}\left[ \frac{f(x,\widehat{w}^{+})}{ (\widehat{w}^{+})^{g^+-1}}-\frac{f(x,t_{\widehat{w}^+}\widehat{w}^{+})}{ (t_{\widehat{w}^+}\widehat{w}^{+})^{g^+-1}}\right] dx < 0
 \end{align*}
which is  a contradiction. Therefore, $0<t_{\widehat{w}^+},s_{\widehat{w}^-} \leq 1.$\\

Step 2: $t_{\widehat{w}^+}=s_{\widehat{w}^-}=1$. Indeed, we argue by contradiction and suppose that $(t_{\widehat{w}^+},s_{\widehat{w}^-})\neq (1,1).$\\
By \eqref{4eq51}, \eqref{4eq22}, \eqref{4eq221}, Lemma \ref{4lem3}  and Fatou's lemma, it follows that
\begin{align*}
m_1 &\leq J(t_{\widehat{w}^+}\widehat{w}^++s_{\widehat{w}^-}\widehat{w}^-)\ \ (\text{since}\ t_{\widehat{w}^+}\widehat{w}^++s_{\widehat{w}^-}\widehat{w}^-\in \mathcal{M})\nonumber\\ &=J(t_{\widehat{w}^+}\widehat{w}^++s_{\widehat{w}^-}\widehat{w}^-)-\frac{1}{g^+}\langle J^{'}(t_{\widehat{w}^+}\widehat{w}^++s_{\widehat{w}^-}\widehat{w}^-),t_{\widehat{w}^+}\widehat{w}^++s_{\widehat{w}^-}\widehat{w}^-\rangle\nonumber\\
& < J(\widehat{w})-\frac{1}{g^+}\langle J^{'}(\widehat{w}),\widehat{w}\rangle\nonumber\\
& \leq \liminf_{n\rightarrow +\infty}\left[  J(w_n)-\frac{1}{g^+}\langle J^{'}(w_n),w_n\rangle\right] \nonumber\\
&=\liminf_{n\rightarrow +\infty} J(w_n)\ \ (\text{since}\ w_n\in \mathcal{M})\nonumber\\
& = m_1.
\end{align*}
Which is a contradiction, thus, $t_{\widehat{w}^+}=s_{\widehat{w}^-}=1$. Hence, according to \eqref{4eq24}, it comes that
$$m_1=\inf\limits_{\mathcal{M}}J=J(t_{\widehat{w}^+}\widehat{w}^++s_{\widehat{w}^-}\widehat{w}^-)=J(\widehat{w}).$$
This ends the proof.
\end{proof}
\begin{prop}\label{4prop14}
Under the hypotheses $(H_f)$, $(H_G)$ and $(H_K)$, $\widehat{w}$ is a critical point for $J$, that is, $\widehat{w}$ is a least energy weak nodal solution of problem \eqref{P}.
\end{prop}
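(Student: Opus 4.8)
The plan is to follow the scheme of Proposition \ref{4prop9}, now with a two--dimensional constraint adapted to $\mathcal M$. Introduce the two locally Lipschitz functionals
$$\varphi_1(w)=\langle J'(w),w^+\rangle,\qquad \varphi_2(w)=\langle J'(w),w^-\rangle,\qquad w\in W^{\alpha,G}(\mathbb R^d)$$
(local Lipschitzness follows exactly as for $\varphi$ in Proposition \ref{4prop9}, since $f(x,\cdot)$ is locally Lipschitz and $w\mapsto w^\pm$ is $1$--Lipschitz). By Clarke's subdifferential calculus, a generic $\varphi^*_{i,w}\in\partial\varphi_i(w)$ acts on a test function $v$ through a formula analogous to \eqref{4eq500} (with $w^+$, resp.\ $w^-$, in the ``linear'' slot, an extra term $-\int_{\mathbb R^d}K(x)f^*(x,w)\,v\,w^\pm\,dx$ with $f^*(x,w)\in\partial_s f(x,w)$, and the corresponding $g'$--terms), together with a contribution coming from differentiating $w\mapsto w^\pm$. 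By Proposition \ref{4prop15}, $\widehat w$ realises $m_1=\inf\{J(w):\ w^\pm\neq0,\ \varphi_1(w)=\varphi_2(w)=0\}$, and by Proposition \ref{4prop12} we already know $\widehat w^\pm\neq0$, so the constraint is meaningful and the strict inequalities in $(f_4)$ hold on sets of positive measure.

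Applying the non--smooth multiplier rule of Clarke \cite[Theorem 10.47, p.~221]{418} exactly as in Proposition \ref{4prop9}, there are $\lambda_1,\lambda_2\geq0$ with $0\in\partial(J+\lambda_1\varphi_1+\lambda_2\varphi_2)(\widehat w)$, hence, by the subdifferential calculus of \cite[p.~48]{419},
$$0=J'(\widehat w)+\lambda_1\varphi^*_{1,\widehat w}+\lambda_2\varphi^*_{2,\widehat w}\qquad\text{in }\big(W^{\alpha,G}(\mathbb R^d)\big)^*$$
for suitable $\varphi^*_{i,\widehat w}\in\partial\varphi_i(\widehat w)$. Testing this identity against $\widehat w^+$ and against $\widehat w^-$ and using $\widehat w\in\mathcal M$ (so that $\langle J'(\widehat w),\widehat w^\pm\rangle=0$) leaves the homogeneous linear system
$$\lambda_1\langle\varphi^*_{1,\widehat w},\widehat w^+\rangle+\lambda_2\langle\varphi^*_{2,\widehat w},\widehat w^+\rangle=0,\qquad \lambda_1\langle\varphi^*_{1,\widehat w},\widehat w^-\rangle+\lambda_2\langle\varphi^*_{2,\widehat w},\widehat w^-\rangle=0.$$
It then suffices to prove that the coefficient matrix is invertible, for then $\lambda_1=\lambda_2=0$, so $J'(\widehat w)=0$ in $(W^{\alpha,G}(\mathbb R^d))^*$ and $\widehat w$ is a weak solution of \eqref{P}; being sign--changing ($\widehat w^\pm\neq0$) with $J(\widehat w)=m_1=\inf_{\mathcal M}J$, it is a least energy nodal solution.

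For the diagonal entries I would repeat the estimate \eqref{4eq12}: since $\widehat w=\widehat w^\pm$ on $\text{supp}(\widehat w^\pm)$, assumption $(g_3)$ bounds the $g'$--terms by $(g^+-1)$ times the corresponding $g$--terms; replacing the latter by $(g^+-1)\int_{\mathbb R^d}K(x)f(x,\widehat w^\pm)\widehat w^\pm\,dx$ (from $\langle J'(\widehat w),\widehat w^\pm\rangle=0$) and invoking $(f_4)$ gives
$$\langle\varphi^*_{1,\widehat w},\widehat w^+\rangle\leq\int_{\mathbb R^d}K(x)\big[(g^+-1)f(x,\widehat w^+)\widehat w^+-f^*(x,\widehat w^+)(\widehat w^+)^2\big]\,dx<0,$$
and likewise $\langle\varphi^*_{2,\widehat w},\widehat w^-\rangle<0$. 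For the off--diagonal entries, since $\text{supp}(\widehat w^+)\cap\text{supp}(\widehat w^-)=\emptyset$ the pointwise terms $g'(\widehat w)\widehat w^+\widehat w^-$ and $K(x)f^*(x,\widehat w)\widehat w^+\widehat w^-$ vanish, and the contribution of $w\mapsto w^\pm$ drops as well since $(\widehat w+t\widehat w^\mp)^\pm=\widehat w^\pm$ identically for $|t|$ small; only the $g'$--weighted nonlocal cross term survives, so
$$\langle\varphi^*_{1,\widehat w},\widehat w^-\rangle=\langle\varphi^*_{2,\widehat w},\widehat w^+\rangle=\int_{\mathbb R^d}\int_{\mathbb R^d}g'\left(\frac{\widehat w(x)-\widehat w(y)}{|x-y|^\alpha}\right)\frac{(\widehat w^+(x)-\widehat w^+(y))(\widehat w^-(x)-\widehat w^-(y))}{|x-y|^{2\alpha+d}}\,dxdy=:\beta\geq0,$$
the sign being a consequence of $g'\geq0$ (by $(g_3)$, $g$ odd $\Rightarrow g'$ even and positive) and of the fact that on the only regions where the product of increments does not vanish (one variable in $\text{supp}(\widehat w^+)$, the other in $\text{supp}(\widehat w^-)$) this product is $\geq0$. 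Thus the matrix is the symmetric $2\times2$ matrix with diagonal entries $a,d<0$ and off--diagonal entry $\beta\geq0$, of determinant $ad-\beta^2$.

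The delicate point---and, I expect, the main obstacle---is to show $\beta^2<ad$, equivalently that the quadratic form $a\lambda_1^2+2\beta\lambda_1\lambda_2+d\lambda_2^2=\langle\lambda_1\varphi^*_{1,\widehat w}+\lambda_2\varphi^*_{2,\widehat w},\,\lambda_1\widehat w^++\lambda_2\widehat w^-\rangle$ is negative definite; in contrast to the single--constraint situation of Proposition \ref{4prop9}, where $\langle\varphi^*_{\widehat u},\widehat u\rangle<0$ already suffices, here the diagonal must strictly dominate the off--diagonal. I would prove it by a sharper, region--by--region version of the $(g_3)$/$(f_4)$ estimate: on $\text{supp}(\widehat w^+)\times\text{supp}(\widehat w^-)$ one has $0\leq\widehat w^+(x)\leq\widehat w(x)-\widehat w(y)$ and $0\leq-\widehat w^-(y)\leq\widehat w(x)-\widehat w(y)$ (and symmetrically on the other cross region), which lets one control $\beta$ and the quadratic $g'$--energies of $\widehat w^+$ and $\widehat w^-$ against the $g$--energies; combining this with the strict Cauchy--Schwarz inequality for the positive semidefinite bilinear form $(u,v)\mapsto\int_{\mathbb R^d}\int_{\mathbb R^d}g'\big(\frac{\widehat w(x)-\widehat w(y)}{|x-y|^\alpha}\big)\frac{(u(x)-u(y))(v(x)-v(y))}{|x-y|^{2\alpha+d}}dxdy$, with $\widehat w\in\mathcal M$, and with the strict inequality in $(f_4)$, one reaches $\beta^2<ad$. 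Granting this, $\lambda_1=\lambda_2=0$ and the proof is complete. This nondegeneracy step is in the same spirit as the case analysis in the uniqueness part of Proposition \ref{4prop11}, and it is where the non--local character of the operator makes the nodal problem genuinely harder than the ground--state one.
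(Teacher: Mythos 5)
Your strategy departs from the paper at a crucial step, and the departure creates the gap you yourself flag. After applying the Clarke multiplier rule you pair the identity
$$0=J'(\widehat w)+\lambda_1\varphi^*_{1,\widehat w}+\lambda_2\varphi^*_{2,\widehat w}$$
with $\widehat w^+$ and with $\widehat w^-$ \emph{separately}, obtaining a $2\times2$ homogeneous system. To conclude $\lambda_1=\lambda_2=0$ you then need the coefficient matrix to be invertible, i.e.\ $\beta^2<ad$ with $a=\langle\varphi^*_{1,\widehat w},\widehat w^+\rangle$, $d=\langle\varphi^*_{2,\widehat w},\widehat w^-\rangle$, $\beta=\langle\varphi^*_{1,\widehat w},\widehat w^-\rangle=\langle\varphi^*_{2,\widehat w},\widehat w^+\rangle$. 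You correctly call this ``the delicate point'' and only sketch a Cauchy--Schwarz strategy; as written this is a genuine gap: $a,d<0$ and $\beta\geq0$ alone do not give $ad>\beta^2$, and the hypotheses $(g_3)$, $(f_4)$ do not obviously furnish a strict dominance of the diagonal over the cross term.

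The paper sidesteps the matrix entirely by a better choice of test function: it pairs the multiplier identity with the \emph{full} function $\widehat w$ rather than with $\widehat w^+$ and $\widehat w^-$ separately. Since $\widehat w\in\mathcal M$ gives $\langle J'(\widehat w),\widehat w\rangle=\langle J'(\widehat w),\widehat w^+\rangle+\langle J'(\widehat w),\widehat w^-\rangle=0$, one obtains the single scalar equation
$$\lambda_+\langle\varphi^*_{\widehat w^+},\widehat w\rangle+\lambda_-\langle\varphi^*_{\widehat w^-},\widehat w\rangle=0.$$
In your notation $\langle\varphi^*_{\widehat w^+},\widehat w\rangle=a+\beta$ and $\langle\varphi^*_{\widehat w^-},\widehat w\rangle=\beta+d$, so one no longer needs $ad>\beta^2$: it suffices to show $a+\beta<0$ and $\beta+d<0$, and then $\lambda_\pm\geq0$ forces $\lambda_+=\lambda_-=0$. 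The strict negativity of $a+\beta$ and $\beta+d$ is obtained from the elementary but essential sign observation (the paper's \eqref{4eq401})
$$\operatorname{sign}\bigl(\widehat w(x)-\widehat w(y)\bigr)=\operatorname{sign}\bigl(\widehat w^\pm(x)-\widehat w^\pm(y)\bigr)\quad\text{a.e.\ }x,y\in\mathbb R^d,$$
which makes the product $[\widehat w(x)-\widehat w(y)][\widehat w^\pm(x)-\widehat w^\pm(y)]$ nonnegative. This is exactly what lets $(g_3)$ be applied in the correct direction (recall $g'(t)t\leq(g^+-1)g(t)$ for $t>0$ but the inequality \emph{reverses} for $t<0$; the sign-matching ensures the multiplied inequality comes out the right way) so that the whole nonlocal $g'$-term in $\langle\varphi^*_{\widehat w^\pm},\widehat w\rangle$ is bounded above by $(g^+-1)$ times the corresponding $g$-term; then $\langle J'(\widehat w),\widehat w^\pm\rangle=0$ and $(f_4)$ finish, exactly as in Proposition \ref{4prop9}. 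In short: pairing with $\widehat w$ turns the quantity you must control into the full sums $a+\beta$ and $\beta+d$, and the sign lemma does the work your unsupplied nondegeneracy estimate was supposed to do.
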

\begin{proof}[Proof]
We consider the functionals $\varphi_\pm:W^{\alpha,G}(\mathbb{R}^d)\rightarrow \mathbb{R}$ defined by 
\begin{align*}
\varphi_\pm(w)& =\langle J^{'}(w),w^\pm\rangle =\int_{\mathbb{R}^{d}}\int_{\mathbb{R}^{d}}g\left( \frac{w(x)-w(y)}{\vert x-y\vert^{\alpha}}\right)\frac{w^\pm(x)-w^\pm(y)}{\vert x-y\vert^{\alpha +d}}dxdy\\
& + \int_{\mathbb{R}^{d}}g(w)w^\pm dx - \int_{\mathbb{R}^{d}} K(x)f(x,w^\pm)w^\pm dx.
\end{align*}
 $\varphi_\pm$  is locally Lipschitz (see\cite[Theorem 2.7.2, p. 221]{419}).\\
Let $w\in W^{\alpha,G}(\mathbb{R}^d)$. For all $\varphi^*_{w^\pm}\in \partial \varphi_\pm(w)$, there is $f^*(x,w^\pm)\in \partial_{w^\pm} f(x,w^\pm)$ such that 
\begin{align}\label{4eq400}
\langle\varphi_{w^\pm}^{*},v\rangle &  =\int_{\mathbb{R}^{d}}\int_{\mathbb{R}^{d}}g\left( \frac{w(x)-w(y)}{\vert x-y\vert^{\alpha}}\right)\frac{v^\pm(x)-v^\pm(y)}{\vert x-y\vert^{\alpha+d}}dxdy\nonumber\\
& + \int_{\mathbb{R}^{d}}g(w^\pm)v^\pm dx - \int_{\mathbb{R}^{d}} K(x)f(x,w^\pm)v^\pm dx\nonumber\\
& + \int_{\mathbb{R}^{d}}\int_{\mathbb{R}^{d}}g^{'}\left( \frac{w(x)-w(y)}{\vert x-y\vert^{\alpha}}\right)\frac{[v(x)-v(y)][w^\pm(x)-w^\pm(y)]}{\vert x-y\vert^{2\alpha+d}}dxdy\nonumber\\
& + \int_{\mathbb{R}^{d}}g^{'}(w^\pm)(v^\pm)^{2} dx - \int_{\mathbb{R}^{d}} K(x)f^{*}(x,w^\pm)(v^\pm)^{2} dx,
\end{align} 
for all $v\in W^{\alpha,G}(\mathbb{R}^d)$.\\
By Proposition \ref{4prop15}, 
$$J(\widehat{w})=m_1=\inf\left\lbrace J(w):\ w\in W^{\alpha,G}(\mathbb{R}^d),\ w^\pm\neq 0,\ \varphi_+(w)=\varphi_-(w)=0 \right\rbrace.$$
Thus, according to the non-smooth multiplier rule of Clarke \cite[Theorem 10.47, p. 221]{418}, there exist $\lambda_+,\lambda_-\geq 0$ such that
$$
0\in\partial(J+ \lambda_+ \varphi_++\lambda_- \varphi_-)(\widehat{w}).
$$
The subdifferential calculus of Clarke \cite[p. 48]{419}, gives that 

$$0\in\partial J(\widehat{w})+\lambda_+ \partial\varphi_+(\widehat{w})+\lambda_- \partial\varphi_-(\widehat{w}).$$
Then,
\begin{equation}\label{44eq11}
0=J^{'}(\widehat{u})+\lambda_+ \varphi_{w^+}^*+\lambda_- \varphi_{w^-}^*\ \ \text{in}\ (W^{\alpha,G},(\mathbb{R}^d))^*,\ \text{for all}\ \varphi^*_{\widehat{w}^+}\in \partial\varphi_+(\widehat{w})\ \text{and all}\ \varphi^*_{\widehat{w}^-}\in \partial\varphi_-(\widehat{w}).
\end{equation}
 Since $\widehat{w}\in \mathcal{M}$, 
 \begin{equation}\label{44eq10}
0=\langle J^{'}(\widehat{w}),\widehat{w}\rangle+\lambda_+\langle \varphi^{*}_{\widehat{w}^+},\widehat{w}\rangle+\lambda_-\langle \varphi^{*}_{\widehat{w}^-},\widehat{w}\rangle =\lambda_+\langle \varphi^{*}_{\widehat{w}^+},\widehat{w}\rangle+\lambda_-\langle \varphi^{*}_{\widehat{w}^-},\widehat{w}\rangle,
\end{equation}
for all $\varphi^*_{\widehat{w}^+}\in \partial\varphi_+(\widehat{w})$ and all $ \varphi^*_{\widehat{w}^-}\in \partial\varphi_-(\widehat{w}).$\\
Let observe that
\begin{equation}\label{4eq401}
\text{sign}\left( \widehat{w}(x)-\widehat{w}(y)\right) =\text{sign}\left( \widehat{w}^\pm(x)-\widehat{w}^\pm(y)\right) , \ \ \text{for a.a. }\ x,y\in \mathbb{R}^d.
\end{equation}
Using \eqref{4eq400}, \eqref{4eq401}, assumption $(g_3)$ and the fact that $\widehat{w}\in\mathcal{M}$, we obtain
\begin{align}\label{4eq27}
\langle \varphi^{*}_{\widehat{w}^{\pm}},\widehat{w}\rangle
& = \int_{\mathbb{R}^{d}}\int_{\mathbb{R}^{d}}g^{'}\left( \frac{\widehat{w}(x)-\widehat{w}(y)}{\vert x-y\vert^{\alpha}}\right)\frac{[\widehat{w}(x)-\widehat{w}(y)][\widehat{w}^{\pm}(x)-\widehat{w}^{\pm}(y)]}{\vert x-y\vert^{2\alpha+d}}dxdy\nonumber\\
& + \int_{\mathbb{R}^{d}}g^{'}(\widehat{w}^{\pm})(\widehat{w}^{\pm})^{2} dx - \int_{\mathbb{R}^{d}} K(x)f^{'}(x,\widehat{w}^{\pm})(\widehat{w}^{\pm})^{2} dx\nonumber\\
&\leq [g^+-1]\left[ 
 \int_{\mathbb{R}^{d}}\int_{\mathbb{R}^{d}}g\left( \frac{\widehat{w}(x)-\widehat{w}(y)}{\vert x-y\vert^{\alpha}}\right)\frac{\widehat{w}^{\pm}(x)-\widehat{w}^{\pm}(y)}{\vert x-y\vert^{\alpha+d}}dxdy\right.\nonumber\\
& + \left. \int_{\mathbb{R}^{d}}g(\widehat{w}^{\pm})\widehat{w}^{\pm} dx\right] - \int_{\mathbb{R}^{d}} K(x)f^{*}(x,\widehat{w}^{\pm})(\widehat{w}^{\pm})^{2} dx\nonumber\\
& =\int_{\mathbb{R}^{d}} K(x)\left([g^+-1]f(x,\widehat{w}^{\pm})\widehat{w}^{\pm}- f^{*}(x,\widehat{w}^{\pm})(\widehat{w}^{\pm})^{2}\right)dx. 
\end{align}
By \eqref{4eq27}, hypotheses $(f_4)$ and $(K_1)$, we infer that
\begin{equation}\label{4eq28}
\langle \varphi^{*}_{\widehat{w}^+},\widehat{w}\rangle < 0\ \ \text{and}\ \ \langle \varphi^{*}_{\widehat{w}^-},\widehat{w}\rangle <0.
\end{equation}
According to \eqref{44eq10}, it comes that $\lambda_\pm=0.$ Therefore, from \eqref{44eq11}, we conclude that $\widehat{w}$  is a critical point of $J$. Hence, $\widehat{w}$ is a nodal weak solution for problem \eqref{P}.\\
This completes the proof.
\end{proof}
\begin{prop}\label{4prop30}
Assume that the hypotheses $(H_f)$, $(H_G)$ and $(H_K)$ hold. Then, the ground state solution $\widehat{u}$ of problem \eqref{P} has a fixed sign. Moreover, $$m_0=J(\widehat{u})=\inf\limits_{\mathcal{N}}J<\inf\limits_{\mathcal{M}}J=J(\widehat{w})=m_1.$$
\end{prop}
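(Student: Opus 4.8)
The plan is to first strengthen the inequality $m_0\le m_1$ of Remark~\ref{4prop111} into the strict inequality $m_0<m_1$, and then to obtain the fixed sign of $\widehat u$ as an immediate by-product.

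For the strict inequality I would argue directly on the nodal minimizer $\widehat w\in\mathcal M$ furnished by Proposition~\ref{4prop15}, for which $J(\widehat w)=m_1$ and $\widehat w^\pm\ne 0$. Since $\widehat w^+,\widehat w^-\in W^{\alpha,G}(\mathbb R^d)\setminus\{0\}$, Proposition~\ref{4prop2} gives unique numbers $\tau_+,\tau_->0$ with $\tau_+\widehat w^+\in\mathcal N$ and $\tau_-\widehat w^-\in\mathcal N$, hence $J(\tau_+\widehat w^+)\ge m_0$ and $J(\tau_-\widehat w^-)\ge m_0$. Set $z:=\tau_+\widehat w^++\tau_-\widehat w^-$. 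Because $\widehat w^+\ge 0$, $\widehat w^-\le 0$ and $\mathrm{supp}(\widehat w^+)\cap\mathrm{supp}(\widehat w^-)=\emptyset$, we have $z^+=\tau_+\widehat w^+\ne 0$ and $z^-=\tau_-\widehat w^-\ne 0$, so Lemma~\ref{4lem9} applies to $z$ and yields
$$2m_0\le J(\tau_+\widehat w^+)+J(\tau_-\widehat w^-)=J(z^+)+J(z^-)<J(z)=J(\tau_+\widehat w^++\tau_-\widehat w^-).$$
On the other hand, applying Proposition~\ref{4prop16} to $\widehat w\in\mathcal M$ with the parameters $t=\tau_+$ and $s=\tau_-$ gives $J(\tau_+\widehat w^++\tau_-\widehat w^-)\le J(\widehat w)=m_1$. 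Combining the two estimates, $2m_0<m_1$, and since $m_0>0$ by Proposition~\ref{4prop44} we conclude $m_0<2m_0<m_1$, i.e.\ $m_0<m_1$. (The same chain run with an arbitrary $w\in\mathcal M$ in place of $\widehat w$ shows more, namely $\inf_{\mathcal M}J\ge 2m_0$.)

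To see that $\widehat u$ has a fixed sign I would argue by contradiction: if $\widehat u^+\ne 0$ and $\widehat u^-\ne 0$, then, since $\widehat u$ is a critical point of $J$ by Proposition~\ref{4prop9}, we have $\langle J'(\widehat u),\widehat u^+\rangle=\langle J'(\widehat u),\widehat u^-\rangle=0$, so $\widehat u\in\mathcal M$ and therefore $m_0=J(\widehat u)\ge\inf_{\mathcal M}J=m_1$, contradicting the strict inequality just established. Hence $\widehat u^+=0$ or $\widehat u^-=0$. Finally, combining Propositions~\ref{4prop5} and~\ref{4prop15} with $m_0<m_1$ produces the displayed identity $m_0=J(\widehat u)=\inf_{\mathcal N}J<\inf_{\mathcal M}J=J(\widehat w)=m_1$.

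The argument only recombines results that are already available, so I do not expect a genuine obstacle. The one point that needs care is the elementary identification $z^\pm=\tau_\pm\widehat w^\pm$ — it is precisely what lets Lemma~\ref{4lem9} (whose proof is where the superadditivity $G(a+b)\ge G(a)+G(b)$ of Lemma~\ref{lem444} enters) be applied to $z$, and this in turn is what substitutes here for the missing decomposition $J(u)=J(u^+)+J(u^-)$ of the local setting.
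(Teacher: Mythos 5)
Your proposal is correct and uses exactly the same ingredients as the paper's proof: Proposition \ref{4prop2} to scale $\widehat w^\pm$ onto $\mathcal N$, Lemma \ref{4lem9} for the strict superadditivity $J(z^+)+J(z^-)<J(z)$, Proposition \ref{4prop16} to compare $J(\tau_+\widehat w^++\tau_-\widehat w^-)$ with $J(\widehat w)$, and Proposition \ref{4prop9} to place a hypothetically sign-changing $\widehat u$ inside $\mathcal M$. The only difference is organizational: the paper runs the whole argument inside a single contradiction (assume $\widehat u^\pm\neq 0$, deduce $m_0=m_1$, then derive $2m_0<m_1$ to force $m_0<0$), whereas you first establish $2m_0<m_1$ unconditionally and then read off both the fixed sign and the strict gap $m_0<m_1$. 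Your ordering is cleaner, since the chain $2m_0<m_1$ is indeed independent of the contradiction hypothesis; you also make explicit the identification $z^\pm=\tau_\pm\widehat w^\pm$ needed to apply Lemma \ref{4lem9}, which the paper leaves implicit. Mathematically the two arguments are the same.
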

\begin{proof}
We argue by contradiction. Suppose that $\widehat{u}^\pm\neq 0$, then  
\begin{equation}\label{4eq33}
m_0=\inf\limits_{\mathcal{N}}J\geq \inf\limits_{\mathcal{M}}J=m_1.
\end{equation}
Since $\mathcal{M}\subset \mathcal{N}$,
\begin{equation}\label{4eq32}
m_0=\inf\limits_{\mathcal{N}}J\leq \inf\limits_{\mathcal{M}}J=m_1. 
\end{equation}
Putting together \eqref{4eq33} and  \eqref{4eq32}, we get 
\begin{equation}\label{4eq34}
m_0=J(\widehat{u})=\inf\limits_{\mathcal{N}}J= \inf\limits_{\mathcal{M}}J=J(\widehat{w})=m_1.
\end{equation} 
On the other hand, since $\widehat{w}\in\mathcal{M}$, $\widehat{w}^\pm\neq 0$. Then, from Proposition  \ref{4prop2}, there is a unique pair $t_{\widehat{w}^+},s_{\widehat{w}^-}>0$ such that
 $$t_{\widehat{w}^+}\widehat{w}^+\in \mathcal{N}\ \text{and}\ s_{\widehat{w}^-}\widehat{w}^-\in \mathcal{N}.$$
By Lemma \ref{4lem9} and Proposition \ref{4prop16}, it follows that
\begin{align}\label{4eq31}
2m_0 &\leq J(t_{\widehat{w}^+}\widehat{w}^+)+J(s_{\widehat{w}^-}\widehat{w}^-)\nonumber\\
& <J(t_{\widehat{w}^+}\widehat{w}^++s_{\widehat{w}^-}\widehat{w}^-)\nonumber\\
&\leq J(\widehat{w})\nonumber\\
&=\inf\limits_{\mathcal{M}}J=m_1.
\end{align}
Which is a contradiction with \eqref{4eq34}. Therefore, $\widehat{u}$ has a fixed sign, and
$$m_0=J(\widehat{u})=\inf\limits_{\mathcal{N}}J<\inf\limits_{\mathcal{M}}J=J(\widehat{w})=m_1.$$
Thus the proof.
\end{proof}
\begin{proof}[\textbf{Proof of Theorem \ref{4thm1} :}]  Theorem \ref{4thm1} is a consequence  of the Propositions \ref{4prop9}, \ref{4prop14} and \ref{4prop30}.
\end{proof}


\begin{thebibliography}{98}

\bibitem{43}V. Ambrosio, G. M. Figueredo, T. Isernia, G. M. Bisci, Sign-changing solutions for a class of
zero mass nonlocal Schr\"odinger equations. \textit{Adv. Nonlinear Stud.} \textbf{19} (2018), 113-132.


\bibitem{44}V. Ambrosio, T. Isernia, Sign-changing solutions for a class of fractional Shr\"odinger equations with vanishing potentials. \textit{Lincei Mat. Appl.} \textbf{29} (2018), 127-152.

\bibitem{442}S. Bahrouni, A. Salort, Neumann and Robin type boundary conditions in Fractional Orlicz-Sobolev spaces. \textit{ESAIM Control Optim. Calc. Var.} \textbf{27} (2021) S15.

\bibitem{443}A. Bahrouni, S. Bahrouni, M. Xiang, On a class of nonvariational problems in fractional Orlicz-Sobolev spaces. \textit{Nonlinear Anal.} \textbf{190} (2020), 111595.

\bibitem{421} S. Bahrouni, H. Ounaies, Embedding theorems in the fractional Orlicz-Sobolev space and applications to non-local problems. \textit{Discrete Contin. Dyn. Syst.} \textbf{40}  (2020), 2917-2944.

\bibitem{422}S. Bahrouni, H. Ounaies, L. S. Tavares, Basic results of fractional Orlicz-Sobolev space and applications to non-local problems. \textit{Topol. Methods Nonlinear Anal.} \textbf{55} (2020), 681-695.

\bibitem{47}A. Bahrouni, H. Missaoui, H. Ounaies, Least-energy nodal solutions of nonlinear equations with fractional Orlicz-Sobolev spaces. (2021). https://arxiv.org/abs/2105.03368.

\bibitem{424} S. Barile, G. M. Figueredo, Existence of least energy positive negative and nodal solutions for a class of $p\&q-$problems with potentialls vanishing at infinity. \textit{J. Math. Anal. Appls.} \textbf{427} (2015), 1205-1233.

\bibitem{423}T. Bartsch, Z. Liu, T. Weth,  Sign Changing Solutions of Superlinear Schr\"odinger
Equations. \textit{Comm. Partial Differential Equations} \textbf{29} (2012), 25-42.

\bibitem{444}T. Bartsch and T. Weth, A note on additional properties of sign changing solutions to superlinear elliptic equations. \textit{Topol. Methods Nonlinear Anal.} \textbf{22} (2003), 1-14.

\bibitem{425}A. M. Batista, M. F. Furtado, Positive and nodal solutions for a nonlinear Schr\"odinger-\\Poisson system with sign-changing potentials. \textit{Nonlinear Anal.} \textbf{39} (2018), 142-156.

 \bibitem{426} H. Berestycki, P. L. Lions, Nonlinear scalar field equations, I Existence of a ground state. \textit{Arch. Ration. Mech. Anal.} \textbf{82} (1983), 313–346.
 
 \bibitem{42}J. Fern\'andez Bonder, A. M. Salort, Fractional order Orlicz-Sobolev space. \textit{J. Funct. Anal.} \textbf{277} (2019), 333-367.
 
 \bibitem{440} J. Fern\'andez Bonder , A. M. Salort, H. Vivas, Global H\"older regularity for eigenfunctions of the fractional $g$-Laplacian. arXiv:2112.00830.
 
\bibitem{441}J. Fern\'andez Bonder , A. M. Salort, H. Vivas, Interior and up to the boundary regularity for the fractional $g$-Laplacian: the convex case. arXiv preprint arXiv:2008.05543, (2020).
 
 \bibitem{414}X. Chang, Z. Nie, Z. Q. Wang, Sign-Changing Solutions of Fractional $p$-Laplacian Problems. \textit{Adv. Nonlinear Stud.} \textbf{19} (2019), 29-53.
 

 \bibitem{427} S. Chen, X. Tang, Ground state sign-changing solutions for elliptic equations with logarithmic nonlinearity. \textit{Acta Math. Hungar} \textbf{157} (2019), 27-38.

\bibitem{418}F. Clarke, Functional Analysis, Calculus of Variation and Optimal control. \textit{Grad. Texts in Math.} \textbf{264}, Springer, London, (2013).

\bibitem{419}F. H. Clarke, Optimization and Nonsmooth Analysis. \textit{Canad. Math. Soc. Se. Monog. Adv. Texts} John Wiley \& Sons, New York, (1983).

\bibitem{417}F. J. S. A. Corr\^ea,  M. L. M. Carvalho, J. V. A. Goncalves, E. D. Silva, Sign changing solutions for quasilinear superlinear elliptic problems. \textit{Quart. J. Math.} \textbf{68} (2017), 391-420.

\bibitem{46}G. M. Figueredo, J. A. Santos, Existence of least energy solution with two nodal domains for a generalized Kirchoff problem in an Orlicz-Sobolev space. \textit{Math. Nachr.} \textbf{290} (2017), 583-603.


\bibitem{49}G. M. Figueiredo, J. R. Santos J\'unior. Existence of a least energy nodal solution for a Schr\"odinger-Kirchhoff equation with potential vanishing at infinity. \textit{J.  Math. Phys.} \textbf{56} (2015), 051506.

\bibitem{410}G. M. Figueiredo, R. G. Nascimento, Existence of a nodal solution with minimal energy for a Kirchhoff equation. \textit{Math. Nachr.} \textbf{288} (2015), 48-60.


     \bibitem{hlel42} N. Fukagai, M. Ito, K. Narukawa, Positive solutions of quasilinear
elliptic equations with critical Orlicz-Sobolev nonlinearity on $\mathbb{R}^{N}$. {\it Funkcial. Ekvac.} {\bf 49} (2006), 235-267.

\bibitem{416}L. Gasi\'nski, N. S. Papageorgiou,  Constant sign and nodal solutions for superlinear double phase problems. \textit{Adv.  Calc. Var.}, \textbf{14} (2019), 1-14.


\bibitem{420}L. Gasi\'nski, N. S. Papageorgiou, Nonlinear Analysis. \textit{Ser. Math. Anal. Appl.} \textbf{9}, Chapman \& Hall/CRC, Boca Raton, (2006).

\bibitem{411}T. Isernia, Sign-changing solutions for a fractional Kirchhoff equation. \textit{Nonlinear Anal.} \textbf{190} (2020), 111623.

\bibitem{412}C. Miranda, Un\'osservazione sul teorema di Brouwer. \textit{Boll. Unione Mat. Ital.} \textbf{3} (1940), 57.

\bibitem{41} G. Molica Bisci, V. R\u{a}dulescu, R. Servadei, Variational Methods for Nonlocal Fractional Problems, with a Foreword by Jean Mawhin, Encyclopedia of Mathematics and its Applications. \textit{Cambridge University Press} \textbf{162} Cambridge, (2016).


\bibitem{45}G. Molica. Bisci, V. R\u{a}dulescu, Ground state solutions of scalar field fractional Schr\"odinger equations. \textit{Calc. Var. Partial Differential Equations} \textbf{54} (2015), 2985-3008.

\bibitem{48}P. Ochoa, A. Silva, M. J. S. Marziani, Existence and multiplicity of solutions for a Dirichlet problem in Fractional Orlicz-Sobolev spaces. (2021), https://arxiv.org/abs/2112.14520.


\bibitem{415}N. S. Papageorgiou, V. D. R\u{a}dulescu, D. D. Rep\u{o}vs, Ground state and nodal solutions for a class of double phase problems. \textit{Z. Angew. Math. Phys.} \textbf{71} (2020), 1-15.


\bibitem{22} M. M. Rao, Z. D. Ren, Theory of Orlicz Spaces. Marcel Dekker, Inc, New York (1991).

\bibitem{413}L. Xu, H. Chen, Positive, negative and least energy nodal solutions for Kirchhoff equations in $\mathbb{R}^N$. \textit{Complex. Var. Elliptic. Equ.} \textbf{66} (2021), 1676-1698.
    
\end{thebibliography}
\end{document}